\newcommand{\bN}{\mathbb{N}}
\newcommand{\bbN}{\mathbb{N}}
\newcommand{\bZ}{\mathbb{Z}}
\newcommand{\bR}{\mathbb{R}}
\newcommand{\Aut}{\operatorname{Aut}}
\newcommand{\CAT}{\operatorname{CAT}}
\newcommand{\id}{\operatorname{id}}
\newcommand{\COS}{\operatorname{COS}}
\newcounter{ClaimCounter}
\newtheorem{theorem}{Theorem}[section]
\newtheorem{proposition}[theorem]{Proposition}
\newtheorem{lemma}[theorem]{Lemma}
\newtheorem{corollary}[theorem]{Corollary}
\newtheorem{claim}[ClaimCounter]{Claim}
\theoremstyle{definition}
\newtheorem{definition}[theorem]{Definition}
\newtheorem{example}[theorem]{Example}
\theoremstyle{remark}
\newtheorem{rem}[theorem]{Remark}
\newenvironment{remark}{\begin{rem}}{\end{rem}}
\title[Directions in Hyperbolic groups]{The space of directions for hyperbolic totally disconnected locally compact groups}
\author{Timothy P.~Bywaters}
\address{School of Mathematics and statistics\\
	The University of Sydney\\
	NSW 2006 Australia}
\email[T.~P.~Bywaters]{t.bywaters@maths.usyd.edu.au}
\date{October 2019}
\begin{document}

\begin{abstract}
	The space of directions is a notion of boundary associated to an arbitrary totally disconnected locally compact group. We explicitly calculate the space of directions of a group acting vertex transitively with compact open vertex stabilisers on a locally finite connected hyperbolic graph. These are examples of groups where techniques from geometric group theory can be generalised from the discrete to the non-discrete case. We show the space of directions for these groups is a discrete metric space. Our results resolve a conjecture of Baumgartner, M\"{o}ller and Willis in the affirmative.
\end{abstract}
\maketitle
\tableofcontents
\section{Introduction}
The study of totally disconnected locally compact (t.d.l.c.) groups naturally arises from the desire to understand general locally compact groups. Any locally compact group is the extension of a connected group by a t.d.l.c. group. The solution to Hilbert's fifth problem shows that connected locally compact groups can be approximated by Lie groups, see \cite{Yamabe53} which builds on numerous works including \cite{Kuranishi50},\cite{Gleason52}, \cite{Montgomery52} and \cite{Yamabe53B}. As a result, a lot of focus is now on t.d.l.c. groups.

Beyond the existence of a basis of compact open subgroups given in \cite{vDa31}, very little was known about the general structure of t.d.l.c. groups until the publication of \cite{Willis94}. Here the concepts of scale function and tidy subgroup where introduced for an inner automorphism of a t.d.l.c. group. These concepts have been generalised to endomorphisms, see \cite{Willis15}, but have also been built upon to give other invariants, examples include the flat rank \cite{Willis04} and the space of directions \cite{Baumgartner06}. Aside from a general understanding of t.d.l.c. groups, these ideas have had surprising applications in random walks and ergodic theory \cite{Dani06, Jaworski96, Previts03}, arithmetic groups \cite{Shalom13} and Galois theory \cite{Chat12}.

There is hope that the ideas stemming from the scale function and tidy subgroup may contribute to the development of a geometric or combinatorial structure associated to an arbitrary t.d.l.c. group. Something analogous to symmetric spaces for Lie groups or buildings for algebraic groups. To better understand how such a structure should look, it is desirable to have an understanding of the relation between general concepts and specific examples. Our work fits within this framework. 

We focus on groups acting vertex transitively with compact open vertex stabilisers on a locally finite connected $\delta$-hyperbolic graph. Such groups are called hyperbolic. Many results from hyperbolic geometric group theory can be generalised to these non-discrete groups. Theorem \ref{thm:axis} is such a generalisation and is of independent interest. The theorem shows that any hyperbolic automorphism of a locally finite connected $\delta$-hyperbolic graph has some power which acts as a translation along a bi-infinite geodesic. This is reminiscent of the notion of axis for hyperbolic automorphisms of trees. Our main theorem, Theorem \ref{thm:mainthm}, shows that the space of directions of a hyperbolic group is a discrete metric space and is related to the hyperbolic boundary of the underlying graph. Our calculations resolve  \cite[Conjecture 32]{Baumgartner12} in the affirmative.
\subsection{Acknowledgements}
The author is indebted to Udo Baumgartner, Jacqui Ramagge, Stephan Tornier and George Willis for their fruitful discussions, suggestions, stimulating questions and examples. This research was conducted while the author was supported by the Australian Government Research Training Program.
\subsection{Structure and conventions}
The remainder of this document is split into three sections. Section \ref{sec:hyp_prelim} is a preliminary section which sets notation and introduces results that are used in later sections. In particular, the scale function and tidy subgroups, the space of directions, hyperbolic metric spaces and hyperbolic groups are all covered.

Section \ref{sec:InvariantGeodesic} contains a generalisation of a theorem from \cite{Delzant91} to the non-discrete case, see Theorem \ref{thm:axis}. Effectively, we show that certain hyperbolic automorphisms of connected locally finite $\delta$-hyperbolic graphs have some power which acts as a translation. This generalises the original theorem which was restricted to elements of finitely generated hyperbolic groups acting on a Cayley graph.

We prove our main result, Theorem \ref{thm:mainthm}, in Section \ref{sec:hypSpaceOfDirections}. The proof uses the notion of axis from Section \ref{sec:InvariantGeodesic} and hyperbolic geometry to control distances between relevant vertices. These distances are then used to give bounds on the coset indices required in the space of directions pseudometric calculation. We conclude with some corollaries which stem from our main result. 
\subsubsection{Conventions}
Set $\bN = \{1, 2, \ldots\}$ and $\bN_0 = \{0\}\cup \bN$. Topological groups are assumed to be Hausdorff. For a group $G$ acting on a set $X$ and $Y\subset X$, we let $G_{\{Y\}} = \{g\in G\mid g(Y) = Y\}$ and $G_{Y} = \{ g\in G\mid g(y) = y \hbox{ for all }y\in Y\}$. If $X$ is a graph, we say $G$ is acts vertex transitively $G$ acts transitively on the of vertices.
\section{Preliminaries}\label{sec:hyp_prelim}
The theory of t.d.l.c. groups is rich and rapidly developing. Here we outline a small section which is required to understand our results.
\subsection{The scale function and tidy subgroups}
We summarise the basic properties of the scale function and tidy subgroups. These two closely related concepts were introduced in \cite{Willis94} but have been built up in \cite{Willis01} and \cite{Willis15}.

Following \cite{vDa31} which gives a basis of compact open subgroups for an arbitrary t.d.l.c. group, we make the following definition.
\begin{definition}
	Let $G$ be a t.d.l.c. group. Set 
	\[\COS(G) = \{U\le G \mid U \hbox{ is compact and open}\}.\]
\end{definition}
\begin{definition}
	Let $G$ be a t.d.l.c. group and $\alpha\in\Aut(G)$. We define the \emph{scale} of $\alpha$ to be the natural number
	\[s(\alpha):= \min\{[\alpha(U):\alpha(U)\cap U]\mid U\in\COS(G)\}.\]
	Any compact open subgroup which realises the scale is called \emph{tidy}.
	The scale induces a function $s:G\to \bbN$, which we also call the \emph{scale}, by setting the scale of $g\in G$ to be the scale of the inner automorphism corresponding to $g$. An automorphism $\alpha$ is said to be \emph{uniscalar} if $s(\alpha) = 1 = s(\alpha^{-1})$.
\end{definition}
A major achievement of \cite{Willis94} is to classify precisely when a subgroup is tidy. To give the classification, for $G$ a t.d.l.c. group, $\alpha\in \Aut(G)$ and $U\in\COS(G)$, we set 
\begin{alignat*}{3}
&U_+ = \bigcap_{n\ge 0} \alpha^n(U) \hbox{, } &U_{++} = \bigcup_{n\ge 0}\alpha^n(U_+),\\
&U_- = \bigcap_{n\le 0} \alpha^n(U)\quad \hbox{and} \quad&U_{--} = \bigcup_{n\le 0}\alpha^n(U_-).
\end{alignat*}
\begin{theorem}\label{thm:tid_min}
	Let $G$ be a t.d.l.c. group and $\alpha\in \Aut(G)$. A subgroup $U$ is \emph{tidy} for $\alpha$ if and only if
	\begin{enumerate}
		\item $U = U_+U_-$. In this case we say $U$ is \emph{tidy above}.
		\item $U_{--}$ (or equivalently $U_{++}$ if $U$ is already tidy above) is closed. In this case we say $U$ is \emph{tidy below}.
	\end{enumerate}
\end{theorem}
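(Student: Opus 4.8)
The natural approach is via the \emph{tidying procedure}: introduce two operations on $\COS(G)$, ``tidying above'' and ``tidying below'', each of which does not increase the index $s(\alpha,U):=[\alpha(U):\alpha(U)\cap U]$, and which together convert an arbitrary $U\in\COS(G)$ into one satisfying (1) and (2). Granting this, both implications follow: a subgroup satisfying (1) and (2) will be shown to realise $\min_{U}s(\alpha,U)=s(\alpha)$, hence to be tidy; conversely a subgroup realising $s(\alpha)$ cannot be strictly improved by either operation, which forces (1) and (2).

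The first ingredient I would establish is the reduction of the index for tidy-above subgroups. Record the elementary inclusions $U_+\subseteq\alpha(U_+)$, $\alpha(U_-)\subseteq U_-$, $\alpha(U_{++})=U_{++}$, $\alpha(U_{--})=U_{--}$, and note $U_+,U_-$ are closed. If $U=U_+U_-$, then $\alpha(U)\cap U=U_+\,\alpha(U_-)$: the inclusion $\supseteq$ is immediate from the inclusions above, and for $\subseteq$ one factors $x\in\alpha(U)\cap U$ as $ab$ with $a\in\alpha(U_+)$, $b\in\alpha(U_-)\subseteq U$, whence $a=xb^{-1}\in\alpha(U_+)\cap U=U_+$. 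Since $\alpha(U_+)\cap\alpha(U_-)=\alpha(U_+\cap U_-)\subseteq U_+$, the standard index identity gives $s(\alpha,U)=[\alpha(U_+)\alpha(U_-):U_+\alpha(U_-)]=[\alpha(U_+):U_+]$. Thus among tidy-above subgroups the scale is computed from a single index attached to the positive part, and one further checks $[\alpha(U):\alpha(U)\cap U]\ge[\alpha(U_+):U_+]$ in general, with equality exactly when $U$ is tidy above.

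For tidying above, I would show that $V:=\bigcap_{k=-n}^{n}\alpha^{k}(U)$ is tidy above once $n$ is large, with $V_+=\alpha^{-n}(U_+)$ and hence $s(\alpha,V)=[\alpha(U_+):U_+]\le s(\alpha,U)$, strictly unless $U$ was already tidy above; the engine is a finiteness argument, that a suitable descending chain of open subgroups of the profinite group $U$ has finite indices which must stabilise, and the stabilisation yields the factorisation $V=V_+V_-$. For tidying below, starting from a tidy-above $V$ whose $V_{++}$ need not be closed, I would form the compact $\alpha$-invariant subgroup $L=\overline{V_{++}}\cap\overline{V_{--}}$, show an appropriate enlargement $W$ of $V$ by $L$ is again compact, open, tidy above, now with $W_{++}$ closed, and that $s(\alpha,W)=[\alpha(W_+):W_+]\le[\alpha(V_+):V_+]=s(\alpha,V)$. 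Proving that $W$ is a subgroup, is still tidy above, and that this last inequality is \emph{strict} whenever $V_{++}$ fails to be closed --- which is where the eventual closedness of $V_{--}$ must genuinely be used, to prevent the obstruction to closedness of $V_{++}$ from being spread over infinitely many compact pieces --- is, I expect, the main obstacle.

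Assembling the pieces: every $U\in\COS(G)$ yields, after the two operations, a subgroup that is tidy above and below with no larger index, and a refinement argument comparing positive parts shows all such subgroups realise the same index; that common value is therefore $\min_{U}s(\alpha,U)=s(\alpha)$, proving $(1)\wedge(2)\Rightarrow$ tidy. Conversely, if $U$ realises $s(\alpha)$, then tidying above cannot strictly decrease the index, forcing $U=U_+U_-$, and then tidying below cannot strictly decrease it, forcing $U_{++}$ closed; the remaining assertion, that for a tidy-above subgroup closedness of $U_{--}$ is equivalent to closedness of $U_{++}$, I would obtain from a short separate lemma exploiting the symmetry $\alpha\leftrightarrow\alpha^{-1}$, under which ``tidy above'' is preserved and $U_{++}$ and $U_{--}$ are interchanged.
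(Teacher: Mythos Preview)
The paper does not actually prove this theorem: it is quoted in the preliminaries as a result of \cite{Willis94}, and the only commentary is the sentence ``The proof of Theorem \ref{thm:tid_min} relies on a process called a tidying procedure.'' There is therefore no in-paper argument to compare against. Your outline is precisely a sketch of that tidying procedure (tidy above by intersecting finitely many $\alpha$-translates, then tidy below by enlarging by a compact $\alpha$-invariant obstruction subgroup, checking that neither step increases the displacement index and that each step strictly decreases it unless the corresponding condition already holds), so you are pointing at exactly the argument the paper defers to the literature.

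As a sketch of Willis's argument your proposal is on the right track, including the key identity $s(\alpha,U)=[\alpha(U_+):U_+]$ for tidy-above $U$ and the observation that $V_+=\alpha^{-n}(U_+)$ for the symmetric intersection $V$. The genuinely delicate step you correctly flag is the tidying-below stage: showing that the enlargement $W$ is a \emph{subgroup}, remains tidy above, and has $[\alpha(W_+):W_+]$ strictly smaller than $[\alpha(V_+):V_+]$ when $V_{++}$ is not closed is where most of the work in \cite{Willis94,Willis01} lies, and your description of $L$ as $\overline{V_{++}}\cap\overline{V_{--}}$ is one of several variants used in the literature; making the strict-decrease argument go through requires a careful analysis of how $L$ sits inside $V$ that your sketch does not yet supply.
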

\begin{remark}
	Theorem \ref{thm:tid_min} has been extended to endomorphisms in \cite{Willis15}. In this case, the definitions of $U_+$ and $U_{++}$ need to be altered to apply to endomorphisms. Also, in the setting of endomorphisms, $U_{--}$ being closed is no longer equivalent to $U_{++}$ being closed. 
\end{remark}
The proof of Theorem \ref{thm:tid_min} relies on a process called a tidying procedure. This procedure takes in as input a compact open subgroup and produces a subgroup which is tidy above and below for a given automorphism. Different tidying procedures have been adapted for a variety of purposes, for example see \cite{Willis04} for a tidying procedure which applies to abelian groups of automorphisms and \cite{Moe02} for a tidying procedure which is used to build a representations of t.d.l.c. groups into tree automorphism groups in \cite{Baumgartner04}. See \cite{Bywaters17B} for a generalisation to endomorphisms. 

Theorem \ref{thm:tid_min} and the associated tidying procedure are instrumental in proving the following properties of tidy subgroups and properties of the scale function.

\begin{proposition}[Properties of tidy subgroups]\label{prop:prop_of_tid}
	Suppose $G$ is a t.d.l.c. group with $U\in\COS(G)$ tidy for $\alpha\in\Aut(G)$. Then
	\begin{enumerate}[label = (\roman*)]
		\item $U$ is tidy for $\alpha^n$ for all $n\in\bZ$;
		\item If $V\in\COS(G)$ is tidy for $\alpha$, then $U\cap V$ is also tidy for $\alpha$;
		\item $\alpha^n(U)$ is tidy for $\alpha$ for all $n\in\bZ$.
	\end{enumerate}
\end{proposition}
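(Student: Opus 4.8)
The plan is to deduce all three items from the tidiness criterion in Theorem~\ref{thm:tid_min}, by tracking how the subgroups $U_{\pm}$ and $U_{\pm\pm}$ behave under the operation in question. Item (iii) is the most direct: writing $V=\alpha^{n}(U)$ and using that $\alpha^{n}$ commutes with the intersections and unions defining $V_{\pm}$ and $V_{\pm\pm}$, an index shift gives $V_{\pm}=\alpha^{n}(U_{\pm})$ and $V_{\pm\pm}=\alpha^{n}(U_{\pm\pm})$. Hence $V=\alpha^{n}(U_{+}U_{-})=V_{+}V_{-}$ is tidy above, and $V_{--}=\alpha^{n}(U_{--})$ is closed because $U_{--}$ is and $\alpha^{n}$ is a homeomorphism; so $V$ is also tidy below, and therefore tidy for $\alpha$.

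For item (i), note first that replacing $\alpha$ by $\alpha^{-1}$ interchanges $U_{+}\leftrightarrow U_{-}$ and, by the equivalence in part (2) of Theorem~\ref{thm:tid_min}, shows $U$ is tidy for $\alpha$ if and only if it is tidy for $\alpha^{-1}$; so we may assume $n\ge 1$. Let $U_{\pm}^{(n)},U_{\pm\pm}^{(n)}$ denote the subgroups attached to $(U,\alpha^{n})$. Tidiness above for $\alpha^{n}$ is immediate from $U_{+}\subseteq U_{+}^{(n)}\subseteq U$ and $U_{-}\subseteq U_{-}^{(n)}\subseteq U$ together with $U=U_{+}U_{-}$, which force $U=U_{+}^{(n)}U_{-}^{(n)}$. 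For tidiness below it is enough to show $U_{++}^{(n)}=U_{++}$; the inclusion $U_{++}\subseteq U_{++}^{(n)}$ is elementary (each $\alpha^{m}(U_{+})$ lies in $\alpha^{nk}(U_{+}^{(n)})$ once $nk\ge m$, using $\alpha(U_{+})\supseteq U_{+}$), and since $\alpha(U_{++})\subseteq U_{++}$ the reverse inclusion reduces to $U_{+}^{(n)}\subseteq U_{++}$. I would prove this last point by writing $x\in U_{+}^{(n)}$ as $x=x_{+}x_{-}$ with $x_{\pm}\in U_{\pm}$, noting that then $x_{-}\in U_{-}$ satisfies $\alpha^{-nk}(x_{-})\in U$ for all $k\ge 0$, and combining closedness of $U_{++}$ (tidiness below of $U$) with a compactness argument on the $\alpha^{n}$-orbit of $x_{-}$ to conclude $x_{-}\in U_{++}$. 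Alternatively, granting the multiplicativity $s(\alpha^{n})=s(\alpha)^{n}$ of \cite{Willis94}, item (i) follows quickly: along the chain $U,\alpha(U),\dots,\alpha^{n}(U)$ the injective maps of coset spaces $(A\cap B)/(A\cap B\cap C)\hookrightarrow B/(B\cap C)$ telescope to
\[
[\alpha^{n}(U):\alpha^{n}(U)\cap U]\ \le\ [\alpha(U):\alpha(U)\cap U]^{\,n}\ =\ s(\alpha)^{n}\ =\ s(\alpha^{n}),
\]
so $U$ realises the scale of $\alpha^{n}$, i.e.\ is tidy for $\alpha^{n}$.

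For item (ii), apply Theorem~\ref{thm:tid_min} to $W=U\cap V$. Since $\alpha$ commutes with finite intersections, $W_{\pm}=U_{\pm}\cap V_{\pm}$; comparing exponents via $\alpha(U_{+})\supseteq U_{+}$ (and similarly for $V$) gives $W_{++}=U_{++}\cap V_{++}$, which is closed because $U_{++}$ and $V_{++}$ are. Thus it suffices to prove $W$ is tidy above, namely $W=(U_{+}\cap V_{+})(U_{-}\cap V_{-})$. Given $x\in W$ with $x=u_{+}u_{-}=v_{+}v_{-}$ ($u_{\pm}\in U_{\pm}$, $v_{\pm}\in V_{\pm}$), I would produce a common refinement by pushing the discrepancy $v_{+}^{-1}u_{+}=v_{-}u_{-}^{-1}$ into the two intersections, exploiting $\alpha^{m}(U_{+})\supseteq U_{+}$, $\alpha^{-m}(U_{-})\supseteq U_{-}$ and the finiteness of the relevant indices.

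The main obstacle is precisely this tidy-above step in item (ii) --- equivalently the inclusion $U_{+}^{(n)}\subseteq U_{++}$ used in item (i): each amounts to reconciling the decompositions of a single element coming from two different systems of $\pm$-subgroups, and it is here that the tidiness hypotheses (closedness of $U_{--}$, $V_{--}$) genuinely enter. Everything else is routine bookkeeping with intersections, unions and the homeomorphism property of $\alpha$.
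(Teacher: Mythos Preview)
The paper does not supply its own proof of this proposition: it is stated in the preliminaries as a known consequence of Theorem~\ref{thm:tid_min} and the tidying procedure, with the literature (essentially \cite{Willis94,Willis01}) as the implicit reference. So there is nothing in the paper to compare your argument against beyond the remark that ``Theorem~\ref{thm:tid_min} and the associated tidying procedure are instrumental'' in establishing it.

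On the merits of your proposal itself: item~(iii) is fine, and for item~(i) the alternative route via $s(\alpha^{n})=s(\alpha)^{n}$ together with the telescoping index bound is clean and correct, though you should be aware that in this paper that multiplicativity is recorded as Proposition~\ref{prop:scale_prop}(iii), i.e.\ \emph{after} the present proposition, so you would want to cite \cite{Willis94} directly to avoid the appearance of circularity. Your first approach to~(i) is the one with a real gap: the step ``combining closedness of $U_{++}$ with a compactness argument on the $\alpha^{n}$-orbit of $x_{-}$ to conclude $x_{-}\in U_{++}$'' is not an argument --- you have only shown $x_{-}\in U_{-}\cap U_{+}^{(n)}$, and nothing you have written explains why an element of $U_{-}$ with $\alpha^{nk}(x_{-})\in U$ for all $k\ge 0$ must lie in $U_{++}$.

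For item~(ii) you correctly handle tidy below (your computation $W_{++}=U_{++}\cap V_{++}$ is right, using that the unions are increasing), and you honestly flag the tidy-above step as the obstacle. That is exactly where the substance lies: proving $U\cap V=(U_{+}\cap V_{+})(U_{-}\cap V_{-})$ from tidiness of $U$ and $V$ separately is genuinely delicate and in Willis's original work goes through the tidying procedure rather than a direct element-chase of the kind you sketch. Your outline ``push the discrepancy $v_{+}^{-1}u_{+}=v_{-}u_{-}^{-1}$ into the two intersections'' is not yet a proof, and I do not see how to complete it along those lines without essentially reproving parts of the tidying machinery.
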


\begin{proposition}[Properties of the scale function]\label{prop:scale_prop}
	Suppose $G$ is a t.d.l.c. group and $\alpha\in\Aut(G)$. Then
	\begin{enumerate}[label= (\roman*)]
		\item $s(\alpha) = 1$ if and only if there exists $U\in\COS(G)$ such that $\alpha(U)\le U$;
		\item $\alpha$ is uniscalar if and only if there exists $U\in\COS(G)$ such that $\alpha(U) = U = \alpha^{-1}(U)$;
		\item $s(\alpha^n) = s(\alpha)^n$ for all $n\in\bbN_0$;
		\item If $\beta\in\Aut(G)$, then $s(\alpha) = s(\beta\alpha\beta^{-1})$.
	\end{enumerate}
	Furthermore, the scale function on $G$ is continuous and $\Delta(g) = s(g)/s(g^{-1})$, where $g\in G$ and $\Delta$ is the modular function on $G$.
\end{proposition}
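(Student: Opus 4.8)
The plan is to reduce each assertion to the behaviour of one well-chosen subgroup: fix $U\in\COS(G)$ tidy for $\alpha$ (such $U$ exists by the tidying procedure underlying Theorem \ref{thm:tid_min}) and exploit the decomposition $U=U_+U_-$ from that theorem together with Proposition \ref{prop:prop_of_tid}. Parts (i) and (ii) are then essentially formal. For (i): if $\alpha(U)\le U$ then $[\alpha(U):\alpha(U)\cap U]=1$, forcing $s(\alpha)=1$; conversely, any subgroup realising the minimum when $s(\alpha)=1$ satisfies $\alpha(U)=\alpha(U)\cap U\subseteq U$. For (ii): $\alpha(U)=U$ trivially gives $s(\alpha)=s(\alpha^{-1})=1$; conversely, if $\alpha$ is uniscalar, take $U$ tidy for $\alpha$, which by Proposition \ref{prop:prop_of_tid}(i) is also tidy for $\alpha^{-1}$, so $s(\alpha)=1$ yields $\alpha(U)\subseteq U$ while $s(\alpha^{-1})=1$ yields $\alpha^{-1}(U)\subseteq U$, i.e. $U\subseteq\alpha(U)$; hence $\alpha(U)=U$, and then $\alpha^{-1}(U)=U$ as well.

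Part (iii) is where the internal structure of $U$ does real work. For tidy $U$ one has $\alpha(U_-)\subseteq U_-$ and $U_+\subseteq\alpha(U_+)$, and a transversality argument for the ``tidy above'' factorisation gives $\alpha^k(U)\cap U=U_+\alpha^k(U_-)$ for all $k\ge0$; since $U$ is also tidy for $\alpha^k$ by Proposition \ref{prop:prop_of_tid}(i), it follows that
\[
s(\alpha^k)=[\alpha^k(U):\alpha^k(U)\cap U]=[\alpha^k(U_+):U_+].
\]
A telescoping product then gives $[\alpha^n(U_+):U_+]=\prod_{j=0}^{n-1}[\alpha^{j+1}(U_+):\alpha^{j}(U_+)]=[\alpha(U_+):U_+]^n=s(\alpha)^n$, each factor being equal because $\alpha^{j}$ is an automorphism and hence preserves that index; the case $n=0$ is immediate. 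Part (iv) follows because $\beta$ induces a bijection of $\COS(G)$ that carries subgroups tidy for $\alpha$ to subgroups tidy for $\beta\alpha\beta^{-1}$ and preserves all indices $[\alpha(V):\alpha(V)\cap V]$, so the two minima agree.

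For the two final statements, continuity of $s\colon G\to\bbN$ just means $s$ is locally constant, and the key claim is that if $U$ is tidy for $g$ then $UgU$ is an open neighbourhood of $g$ on which $s$ is constant: the bound $s(ugv)\le s(g)$ for $u,v\in U$ comes from using $U$ itself as a test subgroup, since $(ugv)U(ugv)^{-1}=u(gUg^{-1})u^{-1}$ and $u$ normalises $U$, while the matching lower bound is the persistence of tidiness of $U$ across $UgU$, verified through Theorem \ref{thm:tid_min}. For the modular function, take $U$ tidy for $g$ (hence also for $g^{-1}$), write $\mu$ for a Haar measure, and use $[A:B]=\mu(A)/\mu(B)$ for $B\le A$ in $\COS(G)$; applying the inner automorphism of $g$ to the index defining $s(g^{-1})$ rewrites it as $[U:U\cap gUg^{-1}]$, whence
\[
\frac{s(g)}{s(g^{-1})}=\frac{[gUg^{-1}:gUg^{-1}\cap U]}{[U:U\cap gUg^{-1}]}=\frac{\mu(gUg^{-1})}{\mu(U)}=\Delta(g).
\]

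I expect the two structural inputs to be the only real obstacles: the transversality identity $\alpha^k(U)\cap U=U_+\alpha^k(U_-)$ behind (iii), and the persistence of tidiness of $U$ across the double coset $UgU$ behind continuity. Both are genuine consequences of the tidying theory rather than formal manipulations, and once they are available the rest of the proposition is bookkeeping organised around a single tidy subgroup.
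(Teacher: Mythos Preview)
The paper does not prove this proposition at all: it is stated without proof as a standard preliminary fact, attributed (in the sentence immediately preceding Propositions~\ref{prop:prop_of_tid} and~\ref{prop:scale_prop}) to Theorem~\ref{thm:tid_min} and the tidying procedure of \cite{Willis94,Willis01}. There is therefore no argument in the paper to compare your proposal against.

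That said, your sketch is the standard route and is correct in outline. One small simplification for continuity: your conjugation computation actually gives the \emph{equality}
\[
[(ugv)U(ugv)^{-1}:(ugv)U(ugv)^{-1}\cap U]=[gUg^{-1}:gUg^{-1}\cap U]=s(g)
\]
for all $u,v\in U$, not just an upper bound, since $v$ normalises $U$ and conjugation by $u^{-1}$ carries $U$ to itself. Hence $s(ugv)\le s(g)$; and since $g\in U(ugv)U$, the same argument with the roles of $g$ and $ugv$ exchanged gives $s(g)\le s(ugv)$. So you do not need to invoke the deeper ``persistence of tidiness across $UgU$'' here; the symmetric inequality already closes the loop.
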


\subsection{The space of directions}
We summarise the results of \cite{Baumgartner06}. There, the authors construct a metric space at infinity for an arbitrary t.d.l.c. group. For this section fix a t.d.l.c. group $G$. Note that for $\alpha\in\Aut(G)$, the map $U\in\COS(G)\mapsto \alpha(U)\in \COS(G)$ defines an action of $\Aut(G)$ on $\COS(G)$. With appropriate choice of metric, this is an action by isometries.
\begin{lemma}[{\cite[Section 2]{Baumgartner06}}]
	For $U,V\in \COS(G)$, set
	\[d(U,V) := \log([U:U\cap V][V:U\cap V]).\]		
	Then the pair $(\COS(G),d)$ is a metric space on which $\Aut(G)$ acts by isometries.
\end{lemma}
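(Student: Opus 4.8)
The plan is to check the three metric axioms — together with finiteness of $d$ — and then to verify the isometric action separately. First I would note that $d$ is well defined and takes values in $\log\bN\subseteq[0,\infty)$: for $U,V\in\COS(G)$ the set $U\cap V$ is a subgroup of $U$ which is open (being $V\cap U$ with $V$ open), hence of finite index in the compact group $U$, and symmetrically $[V:U\cap V]<\infty$; so $[U:U\cap V][V:U\cap V]$ is a positive integer. Symmetry $d(U,V)=d(V,U)$ is immediate from the definition, and positivity is easy: $d(U,V)=0$ forces $[U:U\cap V]=[V:U\cap V]=1$, i.e. $U=U\cap V=V$, while conversely $d(U,U)=\log 1=0$.

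The substantive point is the triangle inequality $d(U,W)\le d(U,V)+d(V,W)$, which after exponentiating amounts to
\[[U:U\cap W]\,[W:U\cap W]\ \le\ [U:U\cap V]\,[V:U\cap V]\,[V:V\cap W]\,[W:V\cap W].\]
Here I would use the elementary fact that whenever $H\le K$ are groups and $L$ is any subgroup, the assignment $h(H\cap L)\mapsto h(K\cap L)$ is a well defined injection of coset spaces, so $[H:H\cap L]\le[K:K\cap L]$. Applying this with $U\cap V\le U$ and $L=W$, together with the tower law for indices, gives
\[[U:U\cap W]\le[U:U\cap V\cap W]=[U:U\cap V]\,[U\cap V:U\cap V\cap W]\le[U:U\cap V]\,[V:V\cap W],\]
and symmetrically, using $V\cap W\le V$ and $L=U$,
\[[W:U\cap W]\le[W:V\cap W]\,[V\cap W:U\cap V\cap W]\le[W:V\cap W]\,[V:U\cap V].\]
Multiplying these two inequalities and regrouping the right-hand side yields the displayed bound, and taking logarithms gives the triangle inequality. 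I expect this combinatorial estimate — in particular the choice of the intermediate subgroup $U\cap V\cap W$ and the invocation of the coset-space injection — to be the only real obstacle, and even that is routine once the setup is fixed.

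Finally, for the action of $\Aut(G)$: any $\alpha\in\Aut(G)$ is a topological automorphism, hence carries compact open subgroups to compact open subgroups, which gives a left action of $\Aut(G)$ on $\COS(G)$. Since $\alpha$ is bijective, $\alpha(U\cap V)=\alpha(U)\cap\alpha(V)$, and $\alpha$ restricts to a bijection of coset spaces $U/(U\cap V)\to\alpha(U)/(\alpha(U)\cap\alpha(V))$, so $[\alpha(U):\alpha(U)\cap\alpha(V)]=[U:U\cap V]$ and likewise with $U$ and $V$ interchanged. Therefore $d(\alpha(U),\alpha(V))=d(U,V)$, i.e. $\Aut(G)$ acts on $(\COS(G),d)$ by isometries.
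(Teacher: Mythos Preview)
Your argument is correct. The paper itself does not supply a proof of this lemma; it simply quotes the result from \cite[Section 2]{Baumgartner06}, so there is nothing in the paper to compare your approach against. What you have written is exactly the standard verification: finiteness of the indices from compactness and openness, the obvious symmetry and nondegeneracy, the triangle inequality via the tower law together with the monotonicity $[H:H\cap L]\le[K:K\cap L]$ for $H\le K$, and preservation of indices under topological automorphisms.

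One small slip to fix: in the first displayed chain you justify the final step $[U\cap V:U\cap V\cap W]\le[V:V\cap W]$ by invoking the inclusion ``$U\cap V\le U$ and $L=W$'', but the injection lemma with $H=U\cap V\le U=K$ gives $[U\cap V:U\cap V\cap W]\le[U:U\cap W]$, which is not what you use. The inclusion you actually need there is $U\cap V\le V$ (with $L=W$). Your second chain, using $V\cap W\le V$ and $L=U$, is labelled correctly. The displayed inequalities themselves are right; only the parenthetical attribution is off.
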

We can now reinterpret the scale function and tidy subgroups in terms of the action on $\COS(G)$. Part \ref{itm:lem_dir_obv2} if Lemma \ref{lem:basic_cos_action} follows immediately from the definitions whereas part \ref{itm:lem_dir_obv1} is \cite[Lemma 3 Part (i)]{Baumgartner06}.
\begin{lemma}\label{lem:basic_cos_action}
	Suppose $\alpha\in\Aut(G)$ and $U\in\COS(G)$. Then
	\begin{enumerate}[label = (\roman*)]
		\item \label{itm:lem_dir_obv1} The set $\{d(\alpha^n(U),U)\mid n\in\bN_0\}$ is bounded if and only if $s(\alpha) = 1$;
		\item\label{itm:lem_dir_obv2} $U\in \COS(G)$ is tidy for $\alpha$ if and only if $d(\alpha(U), U) = \min_{V\in\COS(G)}d(\alpha(V), V)$. In this case we have $d(\alpha(U),U) = \log(s(\alpha))+\log(s(\alpha^{-1}))$. 
	\end{enumerate}	
\end{lemma}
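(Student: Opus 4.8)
The plan is to take part \ref{itm:lem_dir_obv1} as the cited result \cite[Lemma 3 (i)]{Baumgartner06} and to derive part \ref{itm:lem_dir_obv2} directly from the definition of the scale together with Proposition \ref{prop:prop_of_tid}. For part \ref{itm:lem_dir_obv1} I would only recall the mechanism behind the citation: up to a bounded additive error --- coming from the triangle inequality and the fact that each $\alpha^n$ acts by isometries on $(\COS(G),d)$ --- the growth of $d(\alpha^n(U),U)$ does not depend on the chosen $U$, and for $U$ tidy for $\alpha$ it is controlled by the iterated indices $[\alpha^n(U):\alpha^n(U)\cap U]$, whose behaviour is dictated by Proposition \ref{prop:scale_prop}(iii).

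For part \ref{itm:lem_dir_obv2}, the first step I would carry out is to rewrite the metric in terms of two one-sided indices. For $V\in\COS(G)$,
\[
d(\alpha(V),V)=\log[\alpha(V):\alpha(V)\cap V]+\log[V:\alpha(V)\cap V],
\]
and since $\alpha^{-1}$ is an automorphism it preserves subgroup indices; applying it to the inclusion $\alpha(V)\cap V\le V$ yields $[V:\alpha(V)\cap V]=[\alpha^{-1}(V):\alpha^{-1}(V)\cap V]$, whence
\[
d(\alpha(V),V)=\log[\alpha(V):\alpha(V)\cap V]+\log[\alpha^{-1}(V):\alpha^{-1}(V)\cap V].
\]
By definition of the scale the first summand is at least $\log s(\alpha)$ and the second at least $\log s(\alpha^{-1})$, so $d(\alpha(V),V)\ge\log s(\alpha)+\log s(\alpha^{-1})$ for every $V$, with equality exactly when both indices are minimal --- that is, when $V$ is simultaneously tidy for $\alpha$ and tidy for $\alpha^{-1}$.

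The second step is to note that those two conditions coincide: Proposition \ref{prop:prop_of_tid}(i) with $n=-1$ shows a subgroup tidy for $\alpha$ is tidy for $\alpha^{-1}$, and the same statement applied to $\alpha^{-1}$ gives the reverse implication. Consequently $\min_{V\in\COS(G)}d(\alpha(V),V)=\log s(\alpha)+\log s(\alpha^{-1})$, this minimum is attained (the scale being defined as a minimum), and the minimisers are precisely the subgroups tidy for $\alpha$. That is exactly the assertion of part \ref{itm:lem_dir_obv2}.

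I do not anticipate a real obstacle: once the metric has been split into the two one-sided indices, the statement is forced by the definition of the scale. The only genuinely external input is the symmetry ``tidy for $\alpha$ $\Leftrightarrow$ tidy for $\alpha^{-1}$'' from Proposition \ref{prop:prop_of_tid}(i), which is what lets the two indices be minimised by a single subgroup; absent it one would only get the inequality $d(\alpha(V),V)\ge\log s(\alpha)+\log s(\alpha^{-1})$, possibly strict for all $V$. Part \ref{itm:lem_dir_obv1}, on the other hand, really does draw on the structure theory underpinning Proposition \ref{prop:scale_prop}, and I would leave it as the citation it already is.
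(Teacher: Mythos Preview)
Your proposal is correct and matches the paper's approach: the paper states that part \ref{itm:lem_dir_obv1} is \cite[Lemma 3(i)]{Baumgartner06} and that part \ref{itm:lem_dir_obv2} ``follows immediately from the definitions'', and your argument is precisely the natural unpacking of that remark --- split $d(\alpha(V),V)$ into the two indices, bound each below by $\log s(\alpha)$ and $\log s(\alpha^{-1})$, and invoke Proposition~\ref{prop:prop_of_tid}(i) to see that a single $V$ minimises both simultaneously. Your identification of the ``tidy for $\alpha\Leftrightarrow$ tidy for $\alpha^{-1}$'' step as the one nontrivial ingredient is exactly right.
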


We use $\COS(G)$ to build a space at infinity for $G$. Like many boundary constructions associated to metric spaces, our construction involves quotienting infinite rays by an asymptotic relation before defining a suitable topology. We start with our notion of rays.

\begin{definition}
	Suppose $\alpha\in \Aut(G)$. Then for any $U\in \COS(G)$, the \emph{ray generated by $\alpha$ based at $U$} is the sequence $(\alpha^{n}(U))_{n\in\bN_0}$.
\end{definition}

Observe that if $(\alpha^n(U))_{n\in\bN_0}$ and $(\alpha^n(V))_{n\in\bN_0}$ are two rays generated by $\alpha$ based at $U,V\in\COS(G)$, then $d(\alpha^n(U),\alpha^n(V)) = d(U,V)$ trivially is bounded. It is desirable to have a definition that is independent of base point. We expand on this notion of bounded. 

\begin{definition}\label{def:asymptotic_rays}
	Two sequences $(V_n)_{n\in\bN_0}$ and $(U_{n})_{n\in\bN_0}$ of compact open subgroups of $G$ are \emph{asymptotic} if $d(V_n,U_n)$ is bounded.
\end{definition}
We use the notion of asymptotic sequences of compact open subgroups to define a notion of asymptotic on automorphisms. We will use rays generated by the automorphisms at a given base point. Naively, we could require that these two rays be asymptotic in the sense of  Definition \ref{def:asymptotic_rays}; however, this would not account for the speed at which an automorphism moves towards infinity. To accommodate for this, a scaling factor of the automorphism is allowed in the definition. This is given by $k_{\alpha}$ and $k_{\beta}$ in Definition \ref{def:asymp}.

\begin{definition}\label{def:asymp}
	Let $\alpha,\beta\in \Aut(G)$. We say $\alpha$ and $\beta$ are \emph{asymptotic} and write $\alpha\asymp \beta$ if there exists $k_\alpha,k_\beta\in\bN$ and $U_{\alpha},U_{\beta}\in\COS(G)$ such that the rays generated by $\alpha^{k_\alpha}$ and $\beta^{k_{\beta}}$ based at $U_{\alpha}$ and $U_{\beta}$ respectively are asymptotic.
\end{definition}

\begin{proposition}[{\cite[Lemma 10]{Baumgartner06}}]
	The relation $\asymp$ is an equivalence relation and is independent of base point subgroups.
\end{proposition}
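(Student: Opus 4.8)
The plan is to extract two auxiliary facts about the asymptotic relation on sequences and then assemble them. First, I would note that being asymptotic in the sense of Definition \ref{def:asymptotic_rays} is itself an equivalence relation on sequences in $\COS(G)$: reflexivity and symmetry are immediate from the symmetry of $d$, and transitivity follows from the triangle inequality $d(V_n, W_n) \le d(V_n, U_n) + d(U_n, W_n)$, since a sum of two bounded sequences is bounded. Second, because $\Aut(G)$ acts on $(\COS(G), d)$ by isometries, for every $\delta \in \Aut(G)$ and all $U, V \in \COS(G)$ we have $d(\delta^n(U), \delta^n(V)) = d(U, V)$ for every $n$; in particular any two rays generated by the \emph{same} automorphism are asymptotic, irrespective of their base points.

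Granting these, base point independence of $\asymp$ is routine: if the rays of $\alpha^{k_\alpha}$ at $U_\alpha$ and of $\beta^{k_\beta}$ at $U_\beta$ are asymptotic, then for any other $U'_\alpha, U'_\beta \in \COS(G)$ the rays of $\alpha^{k_\alpha}$ at $U'_\alpha$ and at $U_\alpha$ are asymptotic (constant distance $d(U'_\alpha, U_\alpha)$), and likewise on the $\beta$ side, so transitivity of the asymptotic relation on sequences shows the rays of $\alpha^{k_\alpha}$ at $U'_\alpha$ and of $\beta^{k_\beta}$ at $U'_\beta$ are asymptotic. Reflexivity of $\asymp$ (take $k_\alpha = k_\beta = 1$ and equal base points) and symmetry (symmetry of $d$) are then immediate.

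The substantive point is transitivity of $\asymp$, and the obstacle is that a witness for $\alpha \asymp \beta$ involves a power $\beta^{k_\beta}$ while a witness for $\beta \asymp \gamma$ involves a possibly different power $\beta^{k'_\beta}$. I would resolve this with a \emph{passing to a common power} observation: if the rays of $\delta^{k}$ at $U$ and $\epsilon^{l}$ at $V$ are asymptotic, then for every $p \in \bN$ the rays of $\delta^{kp}$ at $U$ and $\epsilon^{lp}$ at $V$ are asymptotic, because the $n$-th term of the latter pair of rays is the $pn$-th term of the former pair, so the distance sequence of the latter is a subsequence of a bounded sequence. Setting $m = \operatorname{lcm}(k_\beta, k'_\beta)$ and writing $m = k_\beta p = k'_\beta q$, this gives that the rays of $\alpha^{k_\alpha p}$ and $\beta^{m}$ are asymptotic (at the relevant base points) and that the rays of $\beta^{m}$ and $\gamma^{k'_\gamma q}$ are asymptotic.

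Finally, I would use base point independence to realign both $\beta^{m}$-rays at a single common base point, and then transitivity of the asymptotic relation on sequences to conclude that the ray of $\alpha^{k_\alpha p}$ and the ray of $\gamma^{k'_\gamma q}$ are asymptotic; hence $\alpha \asymp \gamma$ with witnessing powers $k_\alpha p$ and $k'_\gamma q$. The common-power observation is the only step requiring genuine care; the rest is bookkeeping with the triangle inequality and the isometric action of $\Aut(G)$ on $\COS(G)$.
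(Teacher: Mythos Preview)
The paper does not give its own proof of this proposition; it is simply cited from \cite[Lemma 10]{Baumgartner06}. Your argument is correct and is the natural one: the asymptotic relation on sequences is an equivalence relation by the triangle inequality, base point independence follows from the isometric action, and transitivity of $\asymp$ reduces to aligning the middle powers of $\beta$ via an lcm and then taking subsequences. There is nothing to compare against in this paper beyond the citation, and your proposal would serve as a complete proof.
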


It is not hard to see from Lemma \ref{lem:basic_cos_action} that all automorphisms with scale $1$ form an asymptotic class. We distinguish these automorphisms from others.

\begin{definition} We say an automorphism $\alpha$ of $G$ \emph{moves towards infinity} if $s(\alpha)> 1$. 
\end{definition}

\begin{lemma}\label{lem:asymp_towards_inf}
	Suppose $\alpha$, $\beta\in \Aut(G)$ are asymptotic with $\alpha$ moving towards infinity. Then $\beta$ moves towards infinity.
\end{lemma}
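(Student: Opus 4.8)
The plan is to argue by contraposition: assuming $s(\beta) = 1$, I will show $s(\alpha) = 1$, which contradicts $\alpha$ moving towards infinity and forces $\beta$ to move towards infinity instead. The whole argument rests on converting the asymptotic relation into a boundedness statement about a single ray and then invoking Lemma \ref{lem:basic_cos_action}\ref{itm:lem_dir_obv1} together with the power law $s(\gamma^n) = s(\gamma)^n$ from Proposition \ref{prop:scale_prop}(iii).

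First I would unpack the hypothesis $\alpha \asymp \beta$: by Definition \ref{def:asymp} there are $k_\alpha, k_\beta \in \bN$ and $U_\alpha, U_\beta \in \COS(G)$ such that the rays generated by $\alpha^{k_\alpha}$ and $\beta^{k_\beta}$ based at $U_\alpha$ and $U_\beta$ are asymptotic, i.e.\ there is a constant $C$ with
\[
d\bigl(\alpha^{nk_\alpha}(U_\alpha),\, \beta^{nk_\beta}(U_\beta)\bigr) \le C \qquad \text{for all } n \in \bN_0.
\]
Next, since $s(\beta) = 1$, Proposition \ref{prop:scale_prop}(iii) gives $s(\beta^{k_\beta}) = s(\beta)^{k_\beta} = 1$, so Lemma \ref{lem:basic_cos_action}\ref{itm:lem_dir_obv1} (applied to the automorphism $\beta^{k_\beta}$ and the subgroup $U_\beta$) tells us that $\{\, d(\beta^{nk_\beta}(U_\beta), U_\beta) \mid n \in \bN_0 \,\}$ is bounded, say by $D$.

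Now I would combine these via the triangle inequality in the metric space $(\COS(G), d)$:
\[
d\bigl(\alpha^{nk_\alpha}(U_\alpha),\, U_\alpha\bigr)
\le d\bigl(\alpha^{nk_\alpha}(U_\alpha),\, \beta^{nk_\beta}(U_\beta)\bigr) + d\bigl(\beta^{nk_\beta}(U_\beta),\, U_\beta\bigr)
\le C + D
\]
for every $n \in \bN_0$. Hence the ray $(\alpha^{nk_\alpha}(U_\alpha))_{n \in \bN_0}$ stays within bounded distance of $U_\alpha$, so Lemma \ref{lem:basic_cos_action}\ref{itm:lem_dir_obv1} yields $s(\alpha^{k_\alpha}) = 1$, and therefore $s(\alpha) = 1$ by Proposition \ref{prop:scale_prop}(iii). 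This contradicts the assumption that $\alpha$ moves towards infinity, completing the contrapositive.

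I do not expect a genuine obstacle here; the only mild subtlety is that the asymptotic relation is phrased in terms of the powers $\alpha^{k_\alpha}$ and $\beta^{k_\beta}$ rather than $\alpha$ and $\beta$ themselves, but the scale power law $s(\gamma^n) = s(\gamma)^n$ absorbs this cleanly (equivalently one could "fill in" the intermediate powers using that $\Aut(G)$ acts on $\COS(G)$ by isometries, but that is unnecessary). One should also note that no appeal to base-point independence of $\asymp$ is needed, since we work directly with the base points $U_\alpha, U_\beta$ supplied by the definition.
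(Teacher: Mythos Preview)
Your argument is correct and matches the paper's (implicit) approach: the paper does not spell out a proof but remarks just before the lemma that, by Lemma~\ref{lem:basic_cos_action}, all scale-$1$ automorphisms form a single asymptotic class, from which the statement follows since $\asymp$ is an equivalence relation; your contrapositive is precisely the direct verification of this. One trivial slip: in your displayed triangle inequality the final term should be $d(\beta^{nk_\beta}(U_\beta), U_\alpha)$ rather than $d(\beta^{nk_\beta}(U_\beta), U_\beta)$, so a constant $d(U_\alpha,U_\beta)$ is missing from the bound---harmless, since taking $n=0$ in the asymptotic hypothesis already gives $d(U_\alpha,U_\beta)\le C$.
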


\begin{definition}
	For $A\subset \Aut(G)$, let $A_>$ be the subset of automorphisms moving towards infinity. For $\alpha\in A_>$, let $\partial_A(\alpha)$ to be the asymptotic class of $\alpha$ in $A$.  By identifying $G$ with the subgroup of inner automorphisms, we have a definition of $\partial_G$ which we abbreviate to $\partial$. Finally, we let $\partial G = \partial_G(G_>)$ be the \emph{set of directions} for $G$.
\end{definition}

We use asymptotic classes to associate a metric space to $G$. This space is the completion of a quotient of $\partial G$ by a pseudometric which we now define.

\begin{definition}
	Suppose $\alpha$ and $\beta$ are automorphisms moving towards infinity and choose $U,V\in\cos(G)$. We define 
	\[\delta_{+,n}^{U,V}(\alpha,\beta) = \min\left\{\left.\dfrac{\log[\alpha^n(U):\alpha^n(U)\cap \beta^k(V)]}{n\log s(\alpha)} \hspace{2pt}\right| k\in\bN_0, s(\beta^k)\le s(\alpha^n)\right\}.\]
	Set 
	\[\delta_{+}(\alpha,\beta) = \limsup_{n\to\infty}\delta_{+,n}^{U,V}(\alpha,\beta).\]
\end{definition}
\begin{proposition}[{\cite[Corollary 16 and Lemma 17]{Baumgartner06}}]\label{intro:prop:pmetric}
	The map $\delta_{+}$ is independent of choice of $U$ and $V$, and takes values between $0$ and $1$. We have $\delta_{+}(\alpha,\beta) = 0$ whenever $\alpha\asymp \beta$. The map $\delta(\alpha,\beta):= \delta_+(\alpha,\beta)+\delta_{+}(\beta,\alpha)$ defines a pseudometric on $\Aut(G)$.
\end{proposition}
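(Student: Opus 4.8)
The plan is to derive every assertion from two elementary facts about indices of compact open subgroups. First, for $A,B,C\in\COS(G)$ one has the submultiplicativity estimate $[A:A\cap C]\le[A:A\cap B]\,[B:B\cap C]$, since $[A:A\cap B\cap C]=[A:A\cap B]\,[A\cap B:A\cap B\cap C]\le[A:A\cap B]\,[B:B\cap C]$ while $A\cap B\cap C\le A\cap C$; in particular $\log[A:A\cap B]\le d(A,B)$. Second, for a fixed $U\in\COS(G)$ and any $\alpha$ with $s(\alpha)>1$, comparing $U$ with a subgroup tidy for $\alpha$ via Theorem \ref{thm:tid_min}, Proposition \ref{prop:prop_of_tid}(i) and Proposition \ref{prop:scale_prop}(iii) gives $\log[\alpha^{n}(U):\alpha^{n}(U)\cap U]=n\log s(\alpha)+O(1)$, where $O(1)$ denotes a quantity bounded in terms of the fixed subgroups but independent of $n$; and $\log[\alpha^n(U):\alpha^n(U)\cap V]\le\log[\alpha^n(U):\alpha^n(U)\cap U]+\log[U:U\cap V]$ for any $V$. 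I will use these throughout.

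\emph{Independence of $U,V$ and the bounds $0\le\delta_+\le1$.} The admissible set $\{k\in\bN_0: s(\beta^k)\le s(\alpha^n)\}$ does not mention $U$ or $V$, and it always contains $k=0$. Replacing $U$ by $U'$, the quantity $\log[\alpha^{n}(U):\alpha^{n}(U)\cap\beta^{k}(V)]$ changes by at most $\max\{\log[U:U\cap U'],\log[U':U'\cap U]\}$ (apply submultiplicativity through $\alpha^{n}(U')$, using that $[\alpha^{n}(U):\alpha^{n}(U)\cap\alpha^{n}(U')]=[U:U\cap U']$), uniformly in $k$ and $n$; likewise for $V$. Hence each $\delta_{+,n}^{U,V}$ changes by $O(1/n)$ and the $\limsup$ is unaffected. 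Nonnegativity is immediate since each index is $\ge1$. For the upper bound, by independence take $U$ tidy for $\alpha$ and use $k=0$: then $\delta_{+,n}^{U,V}(\alpha,\beta)\le\big(n\log s(\alpha)+\log[U:U\cap V]\big)/(n\log s(\alpha))=1+O(1/n)$, so $\delta_+\le1$.

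\emph{Vanishing on asymptotic pairs.} Suppose $\alpha\asymp\beta$, witnessed (Definition \ref{def:asymp}) by $p,q\in\bN$ and $U,V\in\COS(G)$ with $d(\alpha^{pm}(U),\beta^{qm}(V))\le C$ for all $m$. First I would show this forces $p\log s(\alpha)=q\log s(\beta)$: from $[\alpha^{pm}(U):\alpha^{pm}(U)\cap U]\le[\alpha^{pm}(U):\alpha^{pm}(U)\cap\beta^{qm}(V)]\,[\beta^{qm}(V):\beta^{qm}(V)\cap V]\,[V:V\cap U]$, combined with $\log[\alpha^{pm}(U):\alpha^{pm}(U)\cap\beta^{qm}(V)]\le C$ and the rate estimates above, one gets $p\log s(\alpha)\le q\log s(\beta)$ on dividing by $m$ and letting $m\to\infty$; the reverse inequality follows by symmetry. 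Now use $U,V$ as base points. For $n=pm$ take $k=qm$, which is admissible since $s(\beta^{qm})=s(\alpha^{pm})$, and note $\log[\alpha^{n}(U):\alpha^{n}(U)\cap\beta^{k}(V)]\le d(\alpha^{pm}(U),\beta^{qm}(V))\le C$, so $\delta_{+,n}^{U,V}(\alpha,\beta)\le C/(n\log s(\alpha))\to0$. For general $n=pm+r$ with $0\le r<p$, absorb the passage from $\alpha^{n}(U)$ to $\alpha^{pm}(U)$ into one more $O(1)$ error via submultiplicativity (the factor $[\alpha^r(U):\alpha^r(U)\cap U]$ is bounded over $0\le r<p$), and $k=qm$ stays admissible. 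Hence $\delta_{+,n}^{U,V}(\alpha,\beta)\to0$ and $\delta_+(\alpha,\beta)=0$.

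\emph{$\delta$ is a pseudometric.} Symmetry is built in, and $\delta\ge0$, $\delta(\alpha,\alpha)=0$ (using $\alpha\asymp\alpha$) follow from the previous two paragraphs. The crux is the triangle inequality, for which it suffices to prove $\delta_+(\alpha,\gamma)\le\delta_+(\alpha,\beta)+\delta_+(\beta,\gamma)$. Fix base points $U,V,W$; for each $n$ let $k=k(n)$ achieve the minimum defining $\delta_{+,n}^{U,V}(\alpha,\beta)$ and, when $k>0$, let $j=j(n)$ achieve the minimum defining $\delta_{+,k}^{V,W}(\beta,\gamma)$. Then $s(\gamma^{j})\le s(\beta^{k})\le s(\alpha^{n})$, so $j$ is admissible for $\delta_{+,n}^{U,W}(\alpha,\gamma)$, and submultiplicativity through $\beta^{k}(V)$ yields
\[
\delta_{+,n}^{U,W}(\alpha,\gamma)\ \le\ \delta_{+,n}^{U,V}(\alpha,\beta)\ +\ \delta_{+,k}^{V,W}(\beta,\gamma)\cdot\frac{k\log s(\beta)}{n\log s(\alpha)},
\]
and the last fraction is $\le1$ because $s(\beta^{k})\le s(\alpha^{n})$ forces $k\log s(\beta)\le n\log s(\alpha)$. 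Passing to a subsequence of $n$ realising $\delta_+(\alpha,\gamma)$ and refining it so that $k(n)$ is either eventually constant or diverges to $\infty$: in the first case (including $k(n)\equiv 0$, handled instead with $j\equiv 0$) the last term tends to $0$ since only finitely many $\beta^{k},\gamma^{j}$ occur; in the second, $\limsup\delta_{+,k(n)}^{V,W}(\beta,\gamma)\le\delta_+(\beta,\gamma)$. Either way $\delta_+(\alpha,\gamma)\le\delta_+(\alpha,\beta)+\delta_+(\beta,\gamma)$, and adding the reversed inequality gives the triangle inequality for $\delta$. I expect this last step to be the main obstacle: one must normalise the two indices by their correct rates $n\log s(\alpha)$ and $k\log s(\beta)$, use the admissibility constraint precisely to tame the mismatch factor, and separately dispatch the degenerate regime in which the intermediate exponent $k(n)$ does not grow.
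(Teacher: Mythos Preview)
The paper does not supply its own proof of this proposition; it is quoted verbatim from \cite{Baumgartner06} as a preliminary result. So there is no in-paper argument to compare against.

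Your proof is correct and follows what is essentially the standard route in the original source: everything is reduced to the submultiplicativity estimate $[A:A\cap C]\le[A:A\cap B]\,[B:B\cap C]$ together with the growth rate $\log[\alpha^{n}(U):\alpha^{n}(U)\cap U]=n\log s(\alpha)+O(1)$ obtained by comparison with a tidy subgroup. The independence-of-basepoint and $[0,1]$-range arguments are clean. For the vanishing on asymptotic pairs, deriving $p\log s(\alpha)=q\log s(\beta)$ first and then using $k=qm$ for $n=pm+r$ is exactly right, and your absorption of the residue $r$ into an $O(1)$ term is the correct manoeuvre. For the triangle inequality, chaining through $\beta^{k(n)}(V)$ and using admissibility to control the mismatch factor $k\log s(\beta)/(n\log s(\alpha))\le1$ is again the expected argument; your case split on whether $k(n)$ stays bounded or diverges handles the degenerate regime correctly (in the bounded case the second summand is $\log[\beta^{k}(V):\beta^{k}(V)\cap\gamma^{j}(W)]/(n\log s(\alpha))$ with bounded numerator, hence $o(1)$).

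One small point worth making explicit in the unbounded case: when you pass from $\limsup_n\big(\delta_{+,k(n)}^{V,W}(\beta,\gamma)\cdot\tfrac{k(n)\log s(\beta)}{n\log s(\alpha)}\big)$ to $\delta_+(\beta,\gamma)$, you are using both that the fraction is at most $1$ and that $\delta_{+,k}^{V,W}(\beta,\gamma)\ge 0$, so that the product is bounded above by $\delta_{+,k(n)}^{V,W}(\beta,\gamma)$ itself; then $k(n)\to\infty$ lets you invoke the definition of $\delta_+(\beta,\gamma)$ as a $\limsup$. This is implicit in what you wrote but deserves one line.
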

Lemma \ref{lem:dir_inv_distance} can be shown by choosing $U$ tidy for $\alpha$ and expanding the definition of $\delta_{+,n}^{U,U}(\alpha,\alpha^{-1})$.
\begin{lemma}[{\cite[Lemma 15]{Baumgartner06}}]\label{lem:dir_inv_distance}
	Suppose $\alpha\in \Aut(G)$ such that both $\alpha$ and $\alpha^{-1}$ move towards infinity. Then $\delta(\alpha,\alpha^{-1}) = 2$.
\end{lemma}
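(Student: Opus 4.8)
The plan is to reduce the two-sided quantity to a one-sided one. By Proposition~\ref{intro:prop:pmetric} we have $\delta(\alpha,\alpha^{-1}) = \delta_+(\alpha,\alpha^{-1}) + \delta_+(\alpha^{-1},\alpha)$; since $\alpha^{-1}$, together with its inverse $\alpha$, also moves towards infinity, the second summand is the first one with $\alpha$ replaced by $\alpha^{-1}$. Hence it suffices to prove $\delta_+(\alpha,\alpha^{-1}) = 1$, and the case of $\alpha^{-1}$ will follow verbatim.

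To compute $\delta_+(\alpha,\alpha^{-1})$ we are free to choose the base subgroups (Proposition~\ref{intro:prop:pmetric}), so we take $U \in \COS(G)$ tidy for $\alpha$; such a $U$ exists because the minimum defining $s(\alpha)$ is attained, and by Proposition~\ref{prop:prop_of_tid} this $U$ is then tidy for every power $\alpha^m$, $m\in\bZ$. The key observation is that applying the automorphism $\alpha^{-n}$, which preserves subgroup indices, yields
\[[\alpha^n(U):\alpha^n(U)\cap\alpha^{-k}(U)] = [U:U\cap\alpha^{-(n+k)}(U)] = s(\alpha^{n+k}) = s(\alpha)^{\,n+k},\]
where the middle equality uses that $U$ is tidy for $\alpha^{n+k}$ (so $[\alpha^{n+k}(U):\alpha^{n+k}(U)\cap U] = s(\alpha^{n+k})$ by Lemma~\ref{lem:basic_cos_action}, and we then apply $\alpha^{-(n+k)}$) and the last uses Proposition~\ref{prop:scale_prop}. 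Substituting into the definition, the term indexed by $k$ in $\delta_{+,n}^{U,U}(\alpha,\alpha^{-1})$ equals $(n+k)/n = 1 + k/n$.

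Since $\alpha$ moves towards infinity we have $s(\alpha) > 1$, so $k = 0$ satisfies the feasibility constraint $s(\alpha^{-k}) \le s(\alpha^n)$ and contributes the value $1$, while every other feasible $k\ge 1$ contributes $1 + k/n > 1$; hence $\delta_{+,n}^{U,U}(\alpha,\alpha^{-1}) = 1$ for all $n$, and therefore $\delta_+(\alpha,\alpha^{-1}) = \limsup_n 1 = 1$. The symmetric argument gives $\delta_+(\alpha^{-1},\alpha) = 1$, so $\delta(\alpha,\alpha^{-1}) = 2$. There is no serious obstacle here; the one point requiring care is to confirm that $k = 0$ is both admissible and optimal in the minimum — that is, that a larger shift $k$ cannot decrease the ratio — which is transparent once the index is evaluated as $s(\alpha)^{n+k}$. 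One should also not forget to record that a tidy subgroup exists and that tidiness is inherited by all integer powers of $\alpha$.
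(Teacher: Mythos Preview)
Your proof is correct and follows exactly the approach the paper indicates: it says just before the lemma that the result ``can be shown by choosing $U$ tidy for $\alpha$ and expanding the definition of $\delta_{+,n}^{U,U}(\alpha,\alpha^{-1})$,'' which is precisely what you do. One minor note: the equality $[\alpha^{n+k}(U):\alpha^{n+k}(U)\cap U] = s(\alpha^{n+k})$ is the \emph{definition} of $U$ being tidy for $\alpha^{n+k}$, not a consequence of Lemma~\ref{lem:basic_cos_action}; the substance of the argument is unaffected.
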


\begin{definition}
	The \emph{space of directions} of $G$ is the completion of the metric space $\partial G/\delta^{-1}(0)$.
\end{definition}
The pseudometric $\delta$ is designed to be a notion of angle between two asymptotic classes. This is analogous to the Tit's metric on the boundary of a $\CAT(0)$ space. Intuition suggests that an $n$-flat, which we interpret as something quasi-isometric to $\bR^n$, inside our group should give an $(n-1)$-sphere inside our boundary. This is the case for appropriate notion of $n$-flat, see the calculations in \cite[Sections 3.4 and 4.2]{Baumgartner06}. On the other hand, we expect hyperbolic groups, which are far from having any $2$-dimensional flats, to have very large angles between rays. This is known to be the case when $G$ is the automorphism group of a regular tree, see \cite[Section 5.1.5]{Baumgartner04}.
We verify this intuition for arbitrary hyperbolic groups in Theorem \ref{thm:mainthm}. 

We give an example of a group with space of directions consisting of two elements. In contrast, Corollary \ref{cor:inf_dir} shows that this cannot be the case for hyperbolic groups.

\begin{example}\label{ex:2_dir}

	Fix a finite abelian group $F$ and let $G_0, G_1$ be two copies of the subgroup of $F^{\bZ}$ given by
	\[\{f = (f_i)_{i\in\bZ}\in F^{\bZ}\mid f_i = \id_F \hbox{ for }i \hbox{ large}\}.\]
	
	Then $G_0$ and $G_1$ are non-compact abelian totally disconnected locally compact groups with basis of compact open subgroups given by $U_N = \{f\in G_k\mid f_i = \id_F \hbox{ for }i \ge N\}$, $N\in\bZ$. 
	
	Set $G = G_0\times G_1$. We use $+$ and $-$ to denote the group operations in $G$. Define the automorphism $\alpha\in \Aut(G)$ as follows: if $(f_i,g_i)_{i\in\bZ}\in G$, then $\alpha((f_i,g_i)_{i\in\bZ})_i = (f_{i + 1}, g_{i - 1})$. We consider the semidirect product $G\rtimes \langle \alpha \rangle$. Group elements are pairs $(g, \alpha^n)$, here $n\in\bZ$ and $g\in G$, and multiplication is given by $(g,\alpha^n)(g', \alpha^m) = (g + \alpha^n(g'), \alpha^{n+m})$. The topology is the inherited from the product topology. We calculate the space of directions. Note that
	\begin{align*}
	(g, \alpha^n)(U_0, \id)(g, \alpha^n)^{-1} & = (g, \alpha^n)(U_0, \id)(-\alpha^{-n}(g), \alpha^{-n})\\ & = (g + \alpha^n(U_0) - g, \id)\\
	& = (\alpha^n(U_0), \id).
	\end{align*}
	Indeed, $(U_0, \id)$ is tidy for $(g, \alpha^n)$ and $s(g, \alpha^n) = |F|^{|n|}$. We see that there are precisely two asymptotic classes, namely $\partial (\id, \alpha)$ and $\partial(\id, \alpha^{-1})$, by considering rays based at $(U_0, \id)$. These classes are distance two apart by Lemma \ref{lem:dir_inv_distance}.
\end{example}
\subsection{Hyperbolic metric spaces}
\label{ssec:HyperbolicMetricSpaces}

We give a short introduction on hyperbolic spaces focusing on results which we rely on later. It is enough for our purposes to restrict ourselves to geodesics. Many of the following results can be generalised to other notions of paths. For example, in \cite{Vaisala05} many results are proved for $h$-short geodesics, $0$-short geodesics correspond to geodesics, and in \cite{Bridson99} many results are proved for quasi-geodesics.

A \emph{geodesic} in a metric space $X$ is an isometric embedding $\gamma:U\to X$,  here $U\in \{[0,t], [0,\infty), \bR\}$. We say $\gamma$ is \emph{infinite} if $U = [0,\infty)$ and \emph{bi-infinite} if $U = \bR$. We abuse notation by identifying $\gamma$ with its image. A \emph{geodesic triangle} is a triple of geodesics with endpoints corresponding to every two element subset of a triple of points in $X$. A metric space is \emph{geodesic} if there exists a geodesic between any two points. In \cite{Gromov87}, Gromov introduced the definition of $\delta$-hyperbolic metric spaces to generalise the metric properties of hyperbolic space and trees.

\begin{definition}
	\label{def:SlimTriangHyp}
	Given $\delta\ge 0$, a geodesic metric space $X$ is \emph{$\delta$-hyperbolic} if for every triple of geodesics $\gamma_0,\gamma_1,\gamma_2$, which form a geodesic triangle, $\gamma_0$ is contained within the $\delta$-neighbourhood of $\gamma_1\cup \gamma_2$.
\end{definition}

Examples of $\delta$-hyperbolic spaces include trees, bounded geodesic metric spaces, standard hyperbolic space and the Coxeter complex of a hyperbolic Coxeter group.

\begin{definition}\label{def:gromovHyp}
	Given $x,y,p\in X$ where $X$ is a metric space, the \emph{Gromov product} of $x$ and $y$ based at $p$ is given by 
	\[(x\mid y)_p = \dfrac{1}{2}\left(d(x,p)+d(y,p) - d(x,y)\right).\]
\end{definition}

There are dual approaches to understanding $\delta$-hyperbolic metric spaces. One of which is by uses of geodesics as seen in Definition \ref{def:SlimTriangHyp}. The second approach is sequential and is highlighted by Theorem \ref{thm:InterpOfGromovProd}. Each approach has its advantages and we use both when required. 

\begin{theorem}[{\cite[Chapter III.H Proposition 1.22]{Bridson99}}]\label{thm:InterpOfGromovProd}
	Suppose $X$ is a geodesic metric space. There exists $\delta \ge 0$ such that $X$ is $\delta$-hyperbolic if and only if there exists $\delta'\ge 0$ such that
	\begin{equation}\label{eq:GromovProdCond}
	(x\mid y)_p\ge \min\{(x\mid z)_p,(y\mid z)_p\} - \delta'
	\end{equation}
	for all $x,y,z,p\in X$.
\end{theorem}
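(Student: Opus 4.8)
The statement to prove is Theorem \ref{thm:InterpOfGromovProd}, the equivalence of the slim triangles definition of $\delta$-hyperbolicity with the four-point Gromov product inequality. This is a classical result; let me sketch a proof.

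\textbf{Plan of proof.}

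The proof is in two directions. We treat the implication from slim triangles to the Gromov product inequality first, then the converse.

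\emph{Slim triangles $\Rightarrow$ Gromov inequality.} The plan is to use the well-known fact that in a geodesic $\delta$-hyperbolic space (slim triangles sense), the Gromov product $(x\mid y)_p$ is, up to an additive error bounded by a multiple of $\delta$, equal to the distance from $p$ to a geodesic $[x,y]$. Concretely, I would first establish that $d(p,[x,y]) - 4\delta \le (x\mid y)_p \le d(p,[x,y])$: the upper bound holds in any geodesic space (by the triangle inequality applied to a point $w$ on $[x,y]$ realizing the distance), and the lower bound comes from the ``$\delta$-thin'' refinement of the slim triangles condition — comparing $[x,y]$ with $[x,p]\cup[y,p]$ and using the fact that the internal points of the comparison tripod are within bounded distance. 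Once we have this, given $x,y,z,p$, pick a point $w$ on a geodesic $[x,y]$ with $d(p,w) = d(p,[x,y])$. The point $w$ lies within $\delta$ of $[x,z]\cup[z,y]$, say within $\delta$ of a point $w'$ on $[x,z]$; then $d(p,[x,z]) \le d(p,w') \le d(p,w) + \delta = d(p,[x,y]) + \delta$. Translating back via the approximate equality of $d(p,[\cdot,\cdot])$ with the Gromov product gives $(x\mid z)_p \ge (x\mid y)_p - \delta'$ for a suitable $\delta'$ depending linearly on $\delta$; symmetrically for the $[z,y]$ case, and taking $\delta'$ large enough covers both, yielding \eqref{eq:GromovProdCond}.

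\emph{Gromov inequality $\Rightarrow$ slim triangles.} Here the plan is the reverse engineering. Assume the four-point condition with constant $\delta'$. Given a geodesic triangle with vertices $x$, $y$, $z$ and a point $p$ on the side $[x,y]$, I want to bound $d(p,[x,z]\cup[z,y])$. Parametrize $[x,y]$ by arc length; then $d(x,p) = (y\mid p)_x$ wait — more usefully, on a geodesic $[x,y]$ the Gromov product $(x\mid y)_p = 0$, and $d(x,p)$, $d(p,y)$ are determined. The standard argument: consider the function $t \mapsto (x \mid \gamma(t))_p$ along $[x,z]$... Actually the cleaner route is to use the ``tree approximation'' / insize estimate: from the four-point condition one shows that for the tripod with legs of lengths $(y\mid z)_x, (x\mid z)_y, (x\mid y)_z$ at the vertices, the comparison map is a $(4\delta')$-quasi-isometry on the four points, and more generally any finite configuration of points embeds into a tree with error $O(\delta' \log n)$ for $n$ points — but for triangles one only needs four or five points, giving a uniform constant. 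Concretely: if $p \in [x,y]$ with $d(x,p) \le (y\mid z)_x$ (the ``$x$-side'' of the internal point), one compares with the point $q$ on $[x,z]$ at the same distance $d(x,p)$ from $x$ and shows $d(p,q) \le$ (constant)$\cdot \delta'$ by repeatedly applying \eqref{eq:GromovProdCond} to the quadruple $\{x, p, q, z\}$ (or $\{x,p,q,y\}$) and using that $p,q$ lie on geodesics from $x$. The symmetric case $d(x,p) \ge (y\mid z)_x$ is handled by comparison with $[z,y]$. This shows every point of $[x,y]$ is within a bounded distance (some explicit multiple of $\delta'$, classically $4\delta'$) of $[x,z]\cup[z,y]$, i.e. $X$ is $\delta$-hyperbolic in the slim triangles sense with $\delta$ a linear function of $\delta'$.

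\textbf{Main obstacle.} The genuinely fiddly part is the converse direction: extracting a point-to-side distance bound purely from the four-point inequality, because one has no a priori ``slimness'' to lean on and must instead run the argument comparing a point on one side with the corresponding point on another side at equal distance from a shared vertex, carefully tracking how the additive constant accumulates through several applications of \eqref{eq:GromovProdCond}. Keeping the constant uniform (independent of the diameter, and independent of the number of points since here we only ever deal with configurations of at most five points) is the key bookkeeping issue. The forward direction is comparatively routine once the lemma $d(p,[x,y]) - O(\delta) \le (x\mid y)_p \le d(p,[x,y])$ is in hand, which itself is a standard consequence of the thin triangles inequality. Since this is a cited result (\cite[Chapter III.H Proposition 1.22]{Bridson99}), in the paper one would simply invoke it, but the above is how the proof goes.
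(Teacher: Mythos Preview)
The paper does not prove this theorem at all; it is stated with a citation to \cite[Chapter III.H Proposition 1.22]{Bridson99} and no argument is given. Your sketch is a correct outline of the standard proof found in that reference, and you rightly note at the end that the paper simply invokes the citation.
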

\begin{remark}
	Using the characterisation given by Theorem \ref{thm:InterpOfGromovProd}, the definition of $\delta$-hyperbolic can be extended to arbitrary metric spaces which may not be geodesic.
\end{remark}
Henceforth, if $X$ is a $\delta$-hyperbolic metric space, then we assume that $\delta$ is chosen large enough such that both Definition \ref{def:SlimTriangHyp} and Equation \eqref{eq:GromovProdCond} hold.

We have the following interpretation of the Gromov product. 
\begin{theorem}[{\cite[2.33 Standard Estimate]{Vaisala05}}]\label{thm:stdest}
	Suppose $X$ is a $\delta$-hyperbolic metric space, $p,x,y\in X$ and $\gamma$ a geodesic from $x$ to $y$. Then 
	\[d(p,\gamma) - 2\delta \le (x\mid y)_p \le d(p,\gamma).\]
\end{theorem}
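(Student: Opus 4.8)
The plan is to prove the two inequalities separately; only the lower bound uses hyperbolicity.

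\textbf{Upper bound.} For the inequality $(x\mid y)_p \le d(p,\gamma)$ I would take an arbitrary point $w$ on $\gamma$. Since $\gamma$ is a geodesic from $x$ to $y$ we have $d(x,y) = d(x,w) + d(w,y)$, so
\[(x\mid y)_p = \tfrac{1}{2}\bigl((d(x,p) - d(x,w)) + (d(y,p) - d(w,y))\bigr) \le \tfrac{1}{2}\bigl(d(w,p) + d(w,p)\bigr) = d(w,p),\]
by two applications of the triangle inequality. Taking the infimum over $w \in \gamma$ gives the claim; this half uses no hyperbolicity.

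\textbf{Lower bound.} The key observation is the identity
\[(x\mid w)_p + (w\mid y)_p = (x\mid y)_p + d(p,w) \qquad \text{for every } w \in \gamma,\]
which follows by expanding the three Gromov products and using $d(x,w) + d(w,y) = d(x,y)$. Next I would run a continuity argument: the map $w \mapsto (x\mid w)_p - (w\mid y)_p$ is continuous on $\gamma$, it is $\ge 0$ at $w = x$ (there it equals $(x\mid x)_p - (x\mid y)_p = d(x,p) - (x\mid y)_p \ge 0$) and $\le 0$ at $w = y$, so by the intermediate value theorem there is a ``balance point'' $w_0 \in \gamma$ with $(x\mid w_0)_p = (w_0\mid y)_p$. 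By the identity above, both of these equal $\tfrac{1}{2}\bigl((x\mid y)_p + d(p,w_0)\bigr)$. Now apply the four-point condition \eqref{eq:GromovProdCond} of Theorem \ref{thm:InterpOfGromovProd} with $z = w_0$ (recall $\delta$ is chosen large enough that \eqref{eq:GromovProdCond} holds with constant $\delta$):
\[(x\mid y)_p \ge \min\{(x\mid w_0)_p, (w_0\mid y)_p\} - \delta = \tfrac{1}{2}\bigl((x\mid y)_p + d(p,w_0)\bigr) - \delta,\]
and rearranging gives $(x\mid y)_p \ge d(p,w_0) - 2\delta \ge d(p,\gamma) - 2\delta$.

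\textbf{Where the work is.} The upper bound and the identity are routine; the one idea that keeps the argument short is the choice of point on $\gamma$ to insert into the four-point inequality. The naive choice --- the point of $\gamma$ nearest $p$ --- does not obviously work, because one only gets an \emph{upper} bound on the relevant Gromov products that way; the balance point $w_0$ is what forces both terms of the $\min$ to be simultaneously large. An alternative avoiding Theorem \ref{thm:InterpOfGromovProd} would be a direct appeal to $\delta$-slimness of the triangle $\Delta(x,y,p)$: a nearest point $w\in\gamma$ to $p$ lies within $\delta$ of one of the other two sides, and chasing triangle inequalities through the comparison point yields the estimate; but that route splits into cases and I would prefer the Gromov-product computation above.
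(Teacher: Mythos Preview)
The paper does not supply its own proof of this statement; it is quoted from \cite{Vaisala05} without argument, so there is nothing in the paper to compare against.

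Your argument is correct. The upper bound and the identity $(x\mid w)_p + (w\mid y)_p = (x\mid y)_p + d(p,w)$ for $w\in\gamma$ are routine, and the balance-point idea for the lower bound is a clean way to feed the four-point inequality \eqref{eq:GromovProdCond}: choosing $w_0$ so that the two terms in the minimum coincide is exactly what makes the rearrangement yield $2\delta$ rather than a worse constant. The intermediate value step is legitimate because $\gamma$ is an isometric image of an interval, hence connected, and $w\mapsto (x\mid w)_p - (w\mid y)_p$ is continuous. Your remark about the alternative slim-triangles route is also accurate: it works but produces a case split and typically a slightly looser constant.
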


Much of the study of $\delta$-hyperbolic spaces is done via the hyperbolic boundary. This is a topological space associated to every $\delta$-hyperbolic space. Like the definition of $\delta$-hyperbolic, there is more than one way to define the boundary. We use two approaches, the first is by infinite geodesics and the second is by sequences. See \cite[Chapter III.H]{Bridson99} for justification.

\begin{definition}Suppose $X$ is a $\delta$-hyperbolic metric space. Two infinite geodesics $\gamma_1$ and $\gamma_2$ in $X$ are said to be \emph{asymptotic} if $d(\gamma_1(t),\gamma_2(t))$ is bounded as $t\in [0,\infty)$ varies. Equivalently, the Hausdorff distance between the two geodesics is finite, see \cite[Chapter III.H Lemma 3.3]{Bridson99}. This is an equivalence relation on the set of all infinite geodesics. We call the collection of equivalence classes the \emph{hyperbolic boundary} and denote it by $\partial X$.
\end{definition}
\begin{definition}\label{def:seq_grom_boundary}
	Suppose $u = (u_i)_{i\in\bN}$ is a sequence of points in a $\delta$-hyperbolic metric space $X$. We say $u$ \emph{converges at infinity} if $(u_i\mid u_j)_p\to \infty$ as $i,j\to\infty$ for some (hence any) $p\in X$. Two sequences $(u_i)_{i\in\bN}$ and $(v_i)_{i\in\bN}$ are said to be \emph{equivalent} if $(u_i\mid v_j)\to\infty$ as $i,j\to\infty$ for some (hence any) $p\in X$. This is an equivalence relation. Denote the equivalence class of $(u_i)_{i\in\bN}$ by $\lim u_i$ and the set of equivalence classes by $\partial_s X$.
\end{definition}

Any infinite geodesic $\gamma$ in $X$ corresponds to a sequence converging at infinity, namely $(\gamma(n))_{n\in\bN}$. It can be seen that if $\gamma_1$ is asymptotic to $\gamma_2$, then $\lim \gamma_1(n) = \lim \gamma_2(n)$. Hence we have an injective map $\partial X \to \partial_s X$. This map is also a surjection.
\begin{theorem}[{\cite[Chapter III.H Lemma 3.13]{Bridson99}}]
	For $X$ a $\delta$-hyperbolic metric, there exists a bijection $\partial X \to \partial_s X$.
\end{theorem}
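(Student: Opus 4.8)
The natural map $\partial X \to \partial_s X$ sending the class of an infinite geodesic $\gamma$ to $\lim\gamma(n)$ is already known to be well defined and injective: this is exactly the content of the two sentences preceding the statement, where the forward direction uses that asymptotic geodesics have the same sequential limit, and the converse uses that by Theorem~\ref{thm:stdest} two geodesic rays with a common sequential limit must stay within a bounded neighbourhood of one another. So the plan is to prove surjectivity: given a sequence $u = (u_i)_{i\in\bN}$ converging at infinity, I will construct an infinite geodesic $\gamma$ with $\lim\gamma(n) = \lim u_i$.

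First I would fix a basepoint $p$ and note that, since $(u_i\mid u_j)_p\le\min\{d(p,u_i),d(p,u_j)\}$, convergence at infinity forces $d(p,u_i)\to\infty$. For each $i$ I pick a geodesic $\sigma_i$ from $p$ to $u_i$, viewed as a $1$-Lipschitz curve $[0,\infty)\to X$ that is constant past its endpoint $d(p,u_i)$. The $\sigma_i$ are equicontinuous with common initial point $p$, and the graph metric of a locally finite connected graph is proper, so the Arzel\`a--Ascoli theorem together with a diagonal argument yields a subsequence $(\sigma_{i_k})_k$ converging uniformly on compacta to a curve $\gamma\colon[0,\infty)\to X$; a uniform limit of geodesics sharing a basepoint is again a geodesic, so $\gamma$ is an infinite geodesic.

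Then I would verify $(\gamma(n)\mid u_m)_p\to\infty$ as $n,m\to\infty$, which is precisely the assertion $\lim\gamma(n) = \lim u_i$. There are two ingredients. First, for a point $x$ on $\sigma_{i_k}$ at parameter $t\le d(p,u_{i_k})$ one computes directly that $(x\mid u_{i_k})_p = t$, so using $|(\gamma(n)\mid u_{i_k})_p - (\sigma_{i_k}(n)\mid u_{i_k})_p|\le d(\gamma(n),\sigma_{i_k}(n))$ and uniform convergence on $[0,n]$ gives $(\gamma(n)\mid u_{i_k})_p\ge n - 1$ once $k$ is large enough for the given $n$. Second, the four-point inequality \eqref{eq:GromovProdCond} reads $(\gamma(n)\mid u_m)_p\ge\min\{(\gamma(n)\mid u_{i_k})_p,(u_{i_k}\mid u_m)_p\}-\delta'$, and $(u_{i_k}\mid u_m)_p\to\infty$. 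Choosing the auxiliary index $k$ appropriately in terms of $n$ and $m$ then yields a bound of the form $(\gamma(n)\mid u_m)_p\ge\min\{n,g(m)\}-\delta'-1$ with $g(m)\to\infty$, which tends to infinity.

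I expect the only genuinely delicate point to be this last step: one must choose the auxiliary index $k$ uniformly enough that the limit in $n$ and the limit in $m$ decouple, so that one really obtains convergence $(\gamma(n)\mid u_m)_p\to\infty$ rather than convergence merely along a diagonal. The remaining ingredients --- slim and thin triangle estimates, the standard Gromov product estimate, and the appeal to properness to invoke Arzel\`a--Ascoli --- are routine. If one wanted the statement for an arbitrary $\delta$-hyperbolic geodesic space without assuming properness, the construction of $\gamma$ would have to be carried out along an ultrafilter instead, but the Gromov product verification above would be unchanged.
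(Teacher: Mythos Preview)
The paper does not give a proof of this theorem: it simply cites \cite[Chapter III.H Lemma 3.13]{Bridson99} and states the result without argument. So there is no proof in the paper to compare your proposal against.

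That said, your outline is essentially the standard argument found in Bridson--Haefliger: extract a limiting ray from geodesics $p\to u_i$ via Arzel\`a--Ascoli (using properness), then use the four-point inequality \eqref{eq:GromovProdCond} to check that the ray represents the same sequential boundary point. Your handling of the Gromov-product verification is correct, including the observation that the auxiliary index $k$ may be chosen depending on both $n$ and $m$; the bound $(\gamma(n)\mid u_m)_p\ge\min\{n-1,\,(u_{i_k}\mid u_m)_p\}-\delta'$ does the job once $k$ is chosen so that $i_k$ exceeds the threshold coming from Cauchy-ness of $(u_i)$ and the uniform-convergence error on $[0,n]$ is small.

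One correction to your closing remark: for non-proper $\delta$-hyperbolic geodesic spaces the map $\partial X\to\partial_s X$ is in general only an injection, not a bijection --- there can be sequences converging at infinity not tracked by any geodesic ray --- so an ultrafilter argument will not rescue surjectivity. Properness is a genuine hypothesis here (it is explicit in the Bridson--Haefliger reference, though the paper's restatement omits it).
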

We can view $X\cup \partial X$ as a compactification of $X$. We outline the topology on $\partial X$ here following \cite[Capter III.H Section 3]{Bridson99} where justifications can be found. A \emph{generalised geodesic} $\smash{\overline{\gamma}}:[0,\infty]\to X\cup \partial X$ is defined from a geodesic $\gamma:I\to X$, where $I\in\{[0,t],[0,\infty)\mid t\in \bR\}$, in the following way:
\begin{enumerate}
	\item If $I = [0,t]$, then $\overline{\gamma}(x) = \gamma(x)$ for $x\le t$ and $\overline{\gamma}(x) = \gamma(t)$ for $x \ge t$;
	\item If $I = [0,\infty)$, then define $\overline{\gamma}(x) = \gamma(x)$ for $x\in I$ and $\gamma(\infty)$ to be the equivalence class of $\gamma$ in $\partial X$.
\end{enumerate}

Fix $p\in X$. We declare that a sequence $(x_n)_{n\in\bN}$ in $X\cup\partial X$ converges to $x\in X\cup\partial X$ if and only if there exists a sequence of generalised geodesics $(\gamma_n)_{n\in\bN}$ such that
\begin{enumerate}[label = (\roman*)]
	\item $\gamma_n(0) = p$ and $\gamma_n(\infty) = x_n$;
	\item every subsequence of $(\gamma_n)_{n\in\bN}$ contains a subsequence which converges (uniformly on compact sets) to some generalised geodesic $\gamma$ with $\gamma(\infty) = x$.
\end{enumerate}
This defines a topology on $X\cup\partial X$ independent of $p$. Given an infinite geodesic $\gamma$, Real number $k >2\delta$ and $n\in\bN$, define $V_n(\gamma)$ to be the collection of boundary points that admit a representative $\gamma'$, recall that boundary points can be viewed as equivalence classes of infinite geodesics, such that $\gamma(0) = \gamma'(0)$ and $d(\gamma(n),\gamma'(n))\le k$. Then $\{V_n(\gamma)\mid n\in\bN\}$ is a neighbourhood basis, not necessarily open, at $\gamma(\infty)$ in $\partial X$. 

\begin{theorem}[{\cite[Section III.H Theorem 3.9]{Bridson99}}]
	Suppose $f:X\to Y$ is a quasi-isometry between $\delta$ hyperbolic spaces. Then $f$ induces a homeomorphism from $\partial X$ to $\partial Y$.
\end{theorem}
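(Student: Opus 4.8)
The plan is to work with the sequential model $\partial_s X$ of the boundary---which is canonically in bijection with $\partial X$ by the theorem quoted above---and to reduce everything to the single geometric fact that a quasi-isometry distorts Gromov products by at most a bounded multiplicative-and-additive amount. Concretely: (i) prove that if $f\colon X\to Y$ is a $(\lambda,\varepsilon)$-quasi-isometry then there is a constant $k=k(\lambda,\varepsilon,\delta)$ with
\[
\tfrac1\lambda\,(x\mid y)_p-k\ \le\ (f(x)\mid f(y))_{f(p)}\ \le\ \lambda\,(x\mid y)_p+k
\qquad(x,y,p\in X);
\]
(ii) deduce that $f$ sends sequences converging at infinity to sequences converging at infinity, and equivalent sequences to equivalent sequences, so that $[(x_i)]\mapsto[(f(x_i))]$ is a well-defined map $\partial_s f\colon\partial_s X\to\partial_s Y$; (iii) construct a two-sided inverse from a quasi-inverse of $f$; and (iv) pass the estimate in (i) to the boundary to obtain continuity of $\partial_s f$ and of its inverse.

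Step (i) carries all of the content, and I expect the Morse lemma (stability of quasi-geodesics) to be the main obstacle. The upper bound follows from the Standard Estimate (Theorem \ref{thm:stdest}): for a geodesic $\gamma$ from $x$ to $y$ one has $(x\mid y)_p\ge d(p,\gamma)-2\delta$, the image $f\circ\gamma$ is a $(\lambda,\varepsilon)$-quasi-geodesic from $f(x)$ to $f(y)$, and the geodesic $\gamma'$ joining those endpoints stays within a Hausdorff distance $R=R(\lambda,\varepsilon,\delta)$ of $f\circ\gamma$ in the $\delta$-hyperbolic space $Y$; combining $d(f(p),\gamma')\le d(f(p),f\circ\gamma)+R\le \lambda\,d(p,\gamma)+\varepsilon+R$ with $(f(x)\mid f(y))_{f(p)}\le d(f(p),\gamma')$ gives the claim. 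The lower bound is the same computation run the other way, now using $(f(x)\mid f(y))_{f(p)}\ge d(f(p),\gamma')-2\delta$ and $d(f(p),f\circ\gamma)\ge\tfrac1\lambda d(p,\gamma)-\varepsilon$. The only genuinely non-formal input is therefore the Morse lemma, which one should either prove directly or cite \cite{Bridson99}.

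Given (i), step (ii) is immediate: if $(x_i\mid x_j)_p\to\infty$ then $(f(x_i)\mid f(x_j))_{f(p)}\to\infty$, and applying the same implication to a mixed pair of sequences shows that equivalence of sequences is preserved; hence $\partial_s f$ is well defined. For (iii), a quasi-isometry admits a quasi-inverse $\bar f\colon Y\to X$ with $\bar f\circ f$ and $f\circ\bar f$ at bounded distance from the identity maps; any self-map at bounded distance from the identity changes every Gromov product by only a bounded amount, hence induces the identity on the sequential boundary. Therefore $\partial_s\bar f\circ\partial_s f=\id$ and $\partial_s f\circ\partial_s\bar f=\id$, and $\partial_s f$ is a bijection.

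For (iv), recall that the boundary topology is governed by the Gromov product: for $\xi,\eta\in\partial_s X$ and a basepoint $p$ put $(\xi\mid\eta)_p=\sup\liminf_{i,j\to\infty}(x_i\mid y_j)_p$ over representatives, and declare $\xi_n\to\xi$ when $(\xi_n\mid\xi)_p\to\infty$; this agrees with the neighbourhood basis $\{V_n(\gamma)\}$ described above, since by Theorem \ref{thm:stdest} the condition $d(\gamma(n),\gamma'(n))\le k$ is equivalent, up to constants, to a lower bound on $(\gamma(\infty)\mid\gamma'(\infty))_{\gamma(0)}$. Taking suprema over representatives in the inequality of (i) yields a constant $k'$ with $(\partial_s f(\xi)\mid\partial_s f(\eta))_{f(p)}\ge\tfrac1\lambda(\xi\mid\eta)_p-k'$, so $\partial_s f$ preserves convergence of sequences of boundary points and is continuous; the identical argument applied to $\bar f$ shows that $\partial_s f^{-1}=\partial_s\bar f$ is continuous. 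Transporting along the canonical bijections $\partial X\leftrightarrow\partial_s X$ and $\partial Y\leftrightarrow\partial_s Y$ then gives the desired homeomorphism $\partial X\to\partial Y$.
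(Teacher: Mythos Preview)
The paper does not give its own proof of this theorem: it is stated with a citation to \cite[Section III.H Theorem 3.9]{Bridson99} and no argument is supplied. There is therefore nothing in the paper to compare your proposal against.

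That said, your sketch is correct and is essentially the standard argument one finds in the literature (including, in substance, in \cite{Bridson99}). The reduction to the bilipschitz-type estimate on Gromov products is the right organising principle, and your derivation of both inequalities in step~(i) via the Standard Estimate together with the Morse lemma is sound; you correctly identify the Morse lemma as the only non-formal ingredient. Steps (ii)--(iv) are routine consequences, and your remark that a map at bounded distance from the identity perturbs Gromov products by a bounded amount is exactly what makes the quasi-inverse argument work. One small point worth making explicit in a write-up: in step~(iv) the passage from the inequality on finite Gromov products to the inequality on boundary Gromov products requires a little care because of the $\sup\liminf$ in the definition, but this is a standard verification and does not affect the validity of your outline.
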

The following Lemma is immediate from Definition \ref{def:seq_grom_boundary}.
\begin{lemma}\label{lem:GromovBound}
	If $\lim v_n = \lim u_n$ for sequences $(u_n),(v_n)\subset X$, then $(u_n\mid u_m)_{p}$ is unbounded for all $p\in X$. 
\end{lemma}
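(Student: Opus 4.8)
The plan is to unwind Definition \ref{def:seq_grom_boundary}. Writing $\lim u_n$ already presupposes that $(u_n)_{n\in\bN}$ converges at infinity, since the symbol $\lim$ is only defined on such sequences; likewise the equation $\lim v_n = \lim u_n$ forces both sequences into the domain of that symbol. So the first step is just to record that $(u_n)_{n\in\bN}$ converges at infinity, which by definition means $(u_i\mid u_j)_p\to\infty$ as $i,j\to\infty$ for some --- equivalently, by the ``hence any'' clause of the definition, every --- basepoint $p\in X$.

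The second and final step is to observe that a double sequence tending to $\infty$ is in particular unbounded: fix $p\in X$, and given $R>0$ choose $N\in\bN$ with $(u_i\mid u_j)_p>R$ for all $i,j\ge N$; then $(u_N\mid u_N)_p>R$, so the set $\{(u_n\mid u_m)_p\mid n,m\in\bN\}$ is not bounded above. As $p\in X$ was arbitrary, $(u_n\mid u_m)_p$ is unbounded for every $p\in X$, as claimed.

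There is no real obstacle: the statement is immediate from the definition, and the hypothesis $\lim v_n=\lim u_n$ serves only to certify that $(u_n)$ converges at infinity --- the conclusion does not otherwise involve $(v_n)$. If one wants the parallel fact that $(u_n\mid v_m)_p$ is unbounded, the equivalence clause of Definition \ref{def:seq_grom_boundary} supplies it by the same argument; and, using the elementary bound $(x\mid y)_p\le\min\{d(x,p),d(y,p)\}$, one may further note that $d(u_n,p)\to\infty$, so the points $u_n$ leave every bounded set. Neither refinement is needed for the statement as phrased.
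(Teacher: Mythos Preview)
Your proof is correct and matches the paper's approach, which simply declares the lemma immediate from Definition \ref{def:seq_grom_boundary}. Your aside is on point: the applications of this lemma in the proof of Lemma \ref{lem:ballsonaxis} actually invoke the cross-product $(u_n\mid v_m)_p$ rather than $(u_n\mid u_m)_p$, so the variant you sketch in your final paragraph is what is really used.
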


Theorem \ref{thm:RibbonLemma} is a generalisation of the well know result that geodesics between two points in $\delta$-hyperbolic space stay close. This proof is two applications of \cite[Section III.H Lemma 1.15]{Bridson99}.

\begin{theorem}\label{thm:RibbonLemma}
	Suppose $\gamma_0:[0,t_0]\to X$ and $\gamma_1:[0,t_1]\to X$ are two geodesics in $X$, a $\delta$-hyperbolic metric space. Then every point on $\gamma_0$ is within distance
	\[8\delta+ 2d(\gamma_0(0),\gamma_1(0))+ 2d(\gamma_0(t_0),\gamma_1(t_1))\]
	of a point on $\gamma_1$.
\end{theorem}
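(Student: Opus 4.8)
The plan is to join $\gamma_0$ to $\gamma_1$ through one auxiliary geodesic and then invoke the slim triangle condition of Definition~\ref{def:SlimTriangHyp} twice; this is essentially the content of the cited lemma \cite[Section III.H Lemma 1.15]{Bridson99}. Write $a_i = \gamma_i(0)$ and let $b_i$ denote the terminal endpoint of $\gamma_i$ for $i\in\{0,1\}$, and set $D_a = d(a_0,a_1)$ and $D_b = d(b_0,b_1)$. I would fix a geodesic $\sigma$ from $a_1$ to $b_0$, a geodesic $\tau_a$ from $a_0$ to $a_1$ (necessarily of length $D_a$), and a geodesic $\tau_b$ from $b_0$ to $b_1$ (of length $D_b$). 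The point of choosing $\sigma$ with these endpoints is that it will be the common side of the two triangles considered below, so the two estimates chain together.

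The first application is to the triple $\gamma_0,\sigma,\tau_a$, which has endpoints on every pair from $\{a_0,b_0,a_1\}$ and hence forms a geodesic triangle. By $\delta$-slimness, every point of $\gamma_0$ lies within $\delta$ of $\sigma\cup\tau_a$; since every point of $\tau_a$ is within $D_a$ of its endpoint $a_1$, which also lies on $\sigma$, one gets that $\gamma_0$ is contained in the $(\delta + D_a)$-neighbourhood of $\sigma$. The second application is to the triple $\sigma,\gamma_1,\tau_b$, a geodesic triangle on $\{a_1,b_0,b_1\}$: by $\delta$-slimness every point of $\sigma$ lies within $\delta$ of $\gamma_1\cup\tau_b$, and every point of $\tau_b$ is within $D_b$ of its endpoint $b_1\in\gamma_1$, so $\sigma$ is contained in the $(\delta + D_b)$-neighbourhood of $\gamma_1$. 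Composing the two inclusions, every point of $\gamma_0$ is within $2\delta + D_a + D_b$ of a point of $\gamma_1$, which is well inside the claimed bound $8\delta + 2d(\gamma_0(0),\gamma_1(0)) + 2d(\gamma_0(t_0),\gamma_1(t_1))$.

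I do not expect a genuine obstacle here: the argument is essentially bookkeeping, and the only delicate point is the choice of $\sigma$ and the two auxiliary geodesics so that the relevant triples really are geodesic triangles and the second triangle reuses the side $\sigma$. The one place to be slightly careful is the degenerate configurations — for instance $a_1 = b_0$ (so $\sigma$ is a single point), $a_0 = a_1$, or $b_0 = b_1$ — where one of the triangles collapses; with the convention that a constant map is a geodesic the slimness estimate still applies verbatim, so no separate case analysis is needed. The modest gap between the bound obtained above and the one stated is harmless: the weaker constant is all that is used in Section~\ref{sec:InvariantGeodesic} and Section~\ref{sec:hypSpaceOfDirections}, and it is probably what a direct two-step application of \cite[Lemma 1.15]{Bridson99} produces.
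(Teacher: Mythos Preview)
Your argument is correct and follows the same two-triangle strategy as the paper's proof: introduce an auxiliary diagonal geodesic and chain two slimness estimates through it. The paper chooses the other diagonal --- from $\gamma_0(0)$ to $\gamma_1(t_1)$ rather than your $\sigma$ from $\gamma_1(0)$ to $\gamma_0(t_0)$ --- and invokes \cite[III.H Lemma~1.15]{Bridson99} as a black box at each step, which is what produces the looser constant $8\delta + 2D_a + 2D_b$. By applying Definition~\ref{def:SlimTriangHyp} directly and absorbing each short side $\tau_a,\tau_b$ into its nearby endpoint, you obtain the sharper bound $2\delta + D_a + D_b$; either diagonal works, and either constant suffices for all subsequent uses of the theorem in Sections~\ref{sec:InvariantGeodesic} and~\ref{sec:hypSpaceOfDirections}.
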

\begin{proof}
	Choose a geodesic $\gamma:[0, t_3]\to X$ from $\gamma_0(0)$ to $\gamma_1(t_1)$. Then any point on $\gamma_0$ is within distance $2(d(\gamma_0(t_0),\gamma_1(t_1))+2\delta)$ of $\gamma$ by \cite[Section III.H Lemma 1.15]{Bridson99}. Applying the same lemma to the geodesics travelling along  $\gamma$ and $\gamma_1$ but in the opposite direction, every point on $\gamma$ is within distance $2(d(\gamma_0(0),\gamma_1(0))+2\delta)$ of a point in $\gamma_1$. Thus every point on $\gamma_0$ is within distance 
	\[8\delta+ 2d(\gamma_0(0),\gamma_1(0))+ 2d(\gamma_0(t_0),\gamma_1(t_1))\]
	of a point on $\gamma_1$.
\end{proof}
Given two geodesics between two points, Theorem \ref{thm:RibbonLemma} can be improved. Again, this result follows from \cite[Section III.H Lemma 1.15]{Bridson99}.
\begin{theorem}\label{thm:fellow_travel}
	Suppose $\gamma_0,\gamma_1:[0,t]\to X$ are two geodesics in a $\delta$-hyperbolic space with $\gamma_0(0) = \gamma_1(0)$ and $\gamma_1(t) = \gamma_2(t)$. Then $d(\gamma_0(t'),\gamma_1(t'))\le 4\delta$ for all $t'\in [0,t]$.
\end{theorem}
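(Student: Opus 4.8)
The plan is to bound the distance from a point of $\gamma_0$ to the \emph{image} of $\gamma_1$ using the Standard Estimate (Theorem \ref{thm:stdest}), and then to spend an extra $2\delta$ to pass from image-distance to the distance between the points carrying the same parameter. (Here I read the hypothesis in the intended way, namely $\gamma_0(0)=\gamma_1(0)$ and $\gamma_0(t)=\gamma_1(t)$.)

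First I would fix $t'\in[0,t]$ and set $p=\gamma_0(t')$, writing $x=\gamma_0(0)=\gamma_1(0)$ and $y=\gamma_0(t)=\gamma_1(t)$. Since $p$ lies on the geodesic $\gamma_0$ from $x$ to $y$, we have $d(x,p)+d(p,y)=d(x,y)$, so the Gromov product $(x\mid y)_p$ vanishes. Applying Theorem \ref{thm:stdest} to the geodesic $\gamma_1$ from $x$ to $y$ then gives $d(p,\gamma_1)-2\delta\le (x\mid y)_p=0$, i.e. $d(\gamma_0(t'),\gamma_1)\le 2\delta$; as $\gamma_1$ is a compact geodesic segment, there is $s'\in[0,t]$ with $d(\gamma_0(t'),\gamma_1(s'))\le 2\delta$.

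Second, because $\gamma_0$ and $\gamma_1$ are isometric embeddings issuing from the common point $x$, we have $t'=d(x,\gamma_0(t'))$ and $s'=d(x,\gamma_1(s'))$, so the triangle inequality based at $x$ yields $|t'-s'|\le d(\gamma_0(t'),\gamma_1(s'))\le 2\delta$. Hence
\[
d(\gamma_0(t'),\gamma_1(t'))\le d(\gamma_0(t'),\gamma_1(s'))+d(\gamma_1(s'),\gamma_1(t'))\le 2\delta+|s'-t'|\le 4\delta,
\]
as required. Alternatively, as in the proof of Theorem \ref{thm:RibbonLemma}, one could cite \cite[Section III.H Lemma 1.15]{Bridson99} in place of the Standard Estimate; or one could apply Definition \ref{def:SlimTriangHyp} to the degenerate triangle formed by $\gamma_0$, $\gamma_1$ and the constant path at $y$, which even gives $d(\gamma_0(t'),\gamma_1)\le\delta$ and so sharpens the constant.

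There is no real obstacle here: the content is entirely in Theorem \ref{thm:stdest} (or the cited Lemma), and the only step that is not purely formal is the passage from ``$\gamma_0(t')$ is close to the image of $\gamma_1$'' to ``$\gamma_0(t')$ is close to $\gamma_1(t')$'', which works precisely because both geodesics start at the same point, so arclength from that point is $1$-Lipschitz in the ambient metric.
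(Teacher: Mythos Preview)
Your argument is correct. The paper itself does not spell out a proof; it merely records that the statement follows from \cite[Section III.H Lemma 1.15]{Bridson99}. You give an explicit self-contained argument using the Standard Estimate (Theorem \ref{thm:stdest}) to get $d(\gamma_0(t'),\gamma_1)\le 2\delta$, and then the elementary arclength step to upgrade image-closeness to same-parameter closeness. This is a clean unpacking of what the cited lemma does in this degenerate situation, and you even note the citation as an alternative, so your write-up subsumes the paper's approach. The only difference is presentational: the paper defers entirely to Bridson--Haefliger, whereas you make the mechanism visible inside the present framework.
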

The following theorem follows from \cite[Section III.H Lemma 3.2 and Lemma 3.3]{Bridson99} and Theorem \ref{thm:RibbonLemma}.
\begin{theorem}\label{thm:BoundaryIsVisual}
	Given two points $\varepsilon_{1}$ and $\varepsilon_2$ on the boundary of a $\delta$-hyperbolic metric space, there exists a bi-infinite geodesic $\gamma$ from $\varepsilon_1$ to $\varepsilon_2$, that is, $\lim_{n\to\infty}\gamma(n) = \varepsilon_1$ and $\lim_{n\to\infty}\gamma(-n) = \varepsilon_2$. Any two such geodesics are Hausdorff distance at most $28\delta$ apart. 
\end{theorem}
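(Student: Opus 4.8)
The plan is to treat existence and the near-uniqueness bound separately, reducing each to machinery already assembled above. For existence, I would fix a basepoint $p\in X$ and choose sequences $(x_n)$ and $(y_n)$ with $\lim x_n=\varepsilon_1$ and $\lim y_n=\varepsilon_2$. Since $\varepsilon_1\ne\varepsilon_2$, Definition \ref{def:seq_grom_boundary} forces $(x_n\mid y_n)_p$ to stay bounded, so by Theorem \ref{thm:stdest} every geodesic $\sigma_n$ from $x_n$ to $y_n$ meets a fixed ball about $p$; reparametrising so that a point of $\sigma_n$ nearest to $p$ sits at parameter $0$ and invoking a standard compactness argument (Arzel\`{a}--Ascoli) yields a subsequential limit $\gamma\colon\bR\to X$ which is again a geodesic. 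The point that genuinely needs checking is that $\gamma$ has the prescribed endpoints in $\partial X$, namely $\lim_{t\to\infty}\gamma(t)=\varepsilon_1$ and $\lim_{t\to-\infty}\gamma(t)=\varepsilon_2$; this is where one unwinds the boundary topology via generalised geodesics, and it is exactly \cite[III.H Lemma 3.2]{Bridson99}, which I would cite rather than reprove.

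For the $28\delta$ bound, let $\gamma,\gamma'\colon\bR\to X$ be two bi-infinite geodesics, each running from $\varepsilon_1$ to $\varepsilon_2$, and split each into a forward ray (converging to $\varepsilon_1$) and a backward ray (converging to $\varepsilon_2$). By \cite[III.H Lemma 3.3]{Bridson99}, the two forward rays, having the same endpoint at infinity, are asymptotic and hence within finite Hausdorff distance of one another, and likewise the backward rays; after reparametrising $\gamma,\gamma'$ so that $\gamma(0)$ and $\gamma'(0)$ form a closest pair of points between the two lines, that Hausdorff distance can be taken to be a small explicit multiple of $\delta$ on each end. Then for each large $N$ I would pick points $q^{+},q^{-}$ on $\gamma'$ with $d(\gamma(N),q^{+})$ and $d(\gamma(-N),q^{-})$ at most that multiple of $\delta$ and apply Theorem \ref{thm:RibbonLemma} to the finite geodesic $\gamma|_{[-N,N]}$ and the subarc of $\gamma'$ between $q^{-}$ and $q^{+}$: every point of $\gamma|_{[-N,N]}$, hence (letting $N\to\infty$) every point of $\gamma$, lies within $8\delta+2d(\gamma(-N),q^{-})+2d(\gamma(N),q^{+})\le 28\delta$ of $\gamma'$. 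Running the argument with $\gamma$ and $\gamma'$ interchanged gives the claimed Hausdorff bound.

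The main obstacle is the constant bookkeeping in the second step: one must choose the parametrisations of $\gamma$ and $\gamma'$ so that the forward rays and, simultaneously, the backward rays fellow-travel with a bound that is genuinely $\delta$-linear and small enough that feeding it twice into the $8\delta+2(\cdot)+2(\cdot)$ of the Ribbon Lemma still totals $28\delta$. Bounding $d(\gamma(0),\gamma')$ by a small multiple of $\delta$ --- for instance via the $2\delta$-slimness of geodesic quadrilaterals, using that the two ``cross'' sides joining the $\varepsilon_1$-ends and the $\varepsilon_2$-ends leave every fixed ball as $N\to\infty$ --- is the cleanest way I see to pin this down, and it is also what makes the simultaneous alignment of both ends possible. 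The existence half is routine apart from the boundary-topology verification, which is why leaning on \cite[III.H Lemma 3.2]{Bridson99} is the efficient route.
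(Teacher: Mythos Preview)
Your proposal is correct and is essentially the same route the paper takes: the paper's proof is a one-line citation to \cite[III.H Lemma~3.2 and Lemma~3.3]{Bridson99} together with Theorem~\ref{thm:RibbonLemma}, and you have simply unpacked how those three ingredients combine. In particular, the $5\delta$ fellow-traveller constant from \cite[III.H Lemma~3.3]{Bridson99} fed twice into the Ribbon Lemma's $8\delta+2(\cdot)+2(\cdot)$ gives exactly $28\delta$, so your constant bookkeeping concern resolves cleanly.
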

\subsection{Hyperbolic topological groups}
\label{ssec:HyperbolicTopologicalGroups}
We study topological groups with a vertex transitive action on a locally finite connected graph $\Gamma$ such that vertex stabilisers are compact open. Theorem \ref{thm:RCay} shows that these groups are not uncommon and are precisely compactly generated t.d.l.c. groups. We give an outline of the proof for completeness. First, we establish our language for graphs.

For us, a \emph{graph} $\Gamma$ is a disjoint union of countable sets $V(\Gamma)\sqcup E(\Gamma)$ where $E(\Gamma)$ is a symmetric diagonal-free subset of $V(\Gamma)\times V(\Gamma)$. We call elements of $V(\Gamma)$ \emph{vertices} and elements of $E(\Gamma)$ \emph{edges}. Given an edge $(v_0, v_1)$, we set $\overline{(v_0, v_1)}$ to be the edge $(v_1,v_0)$. We say $v_0$ is \emph{adjacent} to $v_1$ if $(v_0, v_1)$ is an edge. A \emph{path of length $n$} in $\Gamma$ is a sequence of vertices $(v_0, v_1,\ldots, v_n)$ such that $(v_i, v_{i+1})$ is an edge and $v_i\neq v_{i+2}$. The distance between two vertices is the length shortest path between them. We extend the definition of geodesic to graphs by identifying edges with the unit interval. Thus, a geodesic in a graph is a path between two vertices of minimal length. A graph is \emph{connected} if it is a geodesic  metric space and \emph{locally finite} if each vertex is adjacent to only finitely many other vertices. We will make use of colourings on edges of graphs. We assume that any colouring $c:E(\Gamma)\to \bN$ satisfies $c(e) = c(\overline{e})$.

\begin{theorem}[{\cite{Abels74}, \cite[Corollary 1]{Moller03}, \cite[Theorem 2.2]{Kron08}}]\label{thm:RCay}
	Suppose $G$ is a compactly generated topological group with a compact open subgroup. There exists a locally finite connected graph $\Gamma$ such that $G$ acts vertex transitively on $\Gamma$ with compact open vertex stabilisers.	
	
	Conversely, any group $G$ acting vertex transitively on a locally finite connected graph with compact open vertex stabilisers is compactly generated.
\end{theorem}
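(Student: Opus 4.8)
The plan is to give the standard Cayley--Abels graph construction for the first assertion and a path-lifting argument for the converse.

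For the forward direction, I would first fix a compact open subgroup $U\le G$ and, using compact generation, a compact generating set, which after enlarging I may assume has the form $K = UKU = K^{-1}$ with $U\subseteq K$. Since $U$ is open and $K$ is compact, $K$ is a union of finitely many left cosets of $U$. Define $\Gamma$ with vertex set $V(\Gamma) = G/U$ and declare $gU$ adjacent to $g'U$ precisely when $gU\neq g'U$ and $g^{-1}g'\in K$; the choice $K = UKU = K^{-1}$ makes this relation well defined on cosets and symmetric, and the clause $gU\neq g'U$ makes it diagonal-free, so $\Gamma$ is a graph in the sense of the conventions. Left translation gives an action of $G$ on $\Gamma$ by graph automorphisms; it is vertex transitive, and the stabiliser of the vertex $eU$ is $U$ itself, hence compact and open. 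Local finiteness is precisely the finiteness of $\{kU : k\in K\}$ established above, and connectedness follows since $K$ generates $G$: writing $g = k_1\cdots k_m$ with each $k_i\in K$ gives the walk $eU, k_1U, k_1k_2U,\dots, gU$ in $\Gamma$, from which a genuine path (in the paper's non-backtracking sense) is extracted.

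For the converse, fix a vertex $v_0\in V(\Gamma)$ and set $U$ equal to the stabiliser of $v_0$, which is compact and open by hypothesis. Local finiteness gives finitely many neighbours $v_1,\dots,v_d$ of $v_0$; by vertex transitivity choose $g_i\in G$ with $g_iv_0 = v_i$, and put $S = U\cup g_1U\cup\cdots\cup g_dU$, a finite union of translates of a compact set, hence compact (and symmetric, since $d(g_i^{-1}v_0,v_0) = d(v_0,v_i)\le 1$). To see $S$ generates $G$, take $g\in G$ and a path $v_0 = w_0, w_1,\dots, w_m = gv_0$ supplied by connectedness, and lift it inductively to elements $h_0 = e, h_1,\dots, h_m$ with $h_jv_0 = w_j$ and $h_{j+1}\in h_jS$: since $w_{j+1}$ is adjacent to $w_j = h_jv_0$, the vertex $h_j^{-1}w_{j+1}$ is a neighbour of $v_0$, say $h_j^{-1}w_{j+1} = g_iv_0$, so $h_{j+1} := h_jg_i$ works. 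Then $h_m\in S^m$ and $h_mv_0 = gv_0$, whence $g\in h_mU\subseteq S^mU\subseteq S^{m+1}$ using $U\subseteq S$; thus $G = \langle S\rangle$ is compactly generated.

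The construction is classical, so I do not anticipate a real obstacle; the points requiring care are purely bookkeeping: arranging the generating set to be a finite union of $U$-cosets with $UKU = K$, so that $\Gamma$ is genuinely locally finite and the adjacency relation descends to $G/U$; checking that the $G$-action is by graph automorphisms rather than merely a set action; and, in the converse, noting $U\subseteq S$ and that the lifted walk can be trimmed to a path so that the final inclusion $g\in S^{m+1}$ is valid. The original statements may be cited from \cite{Abels74}, \cite{Moller03} and \cite{Kron08}.
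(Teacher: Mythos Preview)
Your proof is correct and follows essentially the same Cayley--Abels construction and path-lifting converse as the paper's proof outline, only with more of the bookkeeping (well-definedness of adjacency on cosets, the inductive lift of a path) made explicit. One harmless slip: the parenthetical claim that $S$ is symmetric is not quite right, since $(g_iU)^{-1}=Ug_i^{-1}$ need not be contained in $S$, but you never use symmetry in the generation argument, so this does not affect the proof.
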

\begin{proof}[Proof outline]
	Suppose $G$ is a compactly generated topological group with a compact open subgroup $U$. Let $S$ be a symmetric compact generating set. Define a graph $\Gamma$ as follows:
	\begin{align*}
	V(\Gamma) &= G/U\\
	E(\Gamma) &= \{(gU,gsU)\mid g\in G, s\in S\}.
	\end{align*}
	Clearly $G$ acts transitively on $V(\Gamma)$ by left multiplication. That $\Gamma$ is connected follows as $S$ is a generating set. Local finiteness of $\Gamma$ follows from a compactness argument. The stabiliser of the vertex $gU$ is $gUg^{-1}$ which is compact open.
	
	Conversely, suppose $G$ acts vertex transitively on a locally finite connected graph $\Gamma$ with compact open vertex stabilisers. Choose any vertex $v\in \Gamma$. For each vertex $u$ adjacent to $v$, choose $g_u\in G$ with $g_u(v)= u$. Then $G_v\cup \{g_u,g_u^{-1}\mid u \hbox{ is adjacent to }v\}$ is a compact generating set for $G$.
\end{proof}

\begin{definition}
	Suppose $G$ is a compactly generated topological group containing a compact open subgroup. Call any connected locally finite graph on which $G$ acts vertex transitively with compact open vertex stabilisers a \emph{rough Cayley graph} for $G$.
\end{definition}
\begin{remark}
	Rough Cayley graphs appear throughout literature but are sometimes called Cayley-Abels graphs or relative Cayley graphs. They are also a special case of a Schreier graph and sometimes carry that label. 
\end{remark}
\begin{theorem}{\cite[Theorem 2.7]{Kron08}}
	Suppose $G$ is a compactly generated topological group containing a compact open subgroup. Then any two rough Cayley graphs are quasi-isometric.
\end{theorem}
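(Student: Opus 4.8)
The plan is to produce an explicit quasi-isometry between the two rough Cayley graphs, with the compactly generated group $G$ mediating. So let $\Gamma_1$ and $\Gamma_2$ be rough Cayley graphs for $G$; I would fix base vertices $v_i\in V(\Gamma_i)$ and write $U_i = G_{v_i}\in\COS(G)$. Vertex transitivity makes the orbit map $g\mapsto g(v_i)$ descend to a bijection $G/U_i\to V(\Gamma_i)$, so I can fix a set-theoretic section $\sigma_i\colon V(\Gamma_i)\to G$ with $\sigma_i(w)(v_i)=w$ and define $\phi\colon V(\Gamma_1)\to V(\Gamma_2)$ by $\phi(w)=\sigma_1(w)(v_2)$, together with the symmetric map $\psi\colon V(\Gamma_2)\to V(\Gamma_1)$. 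Since each $U_i$ is compact and each $V(\Gamma_j)$ is discrete, the orbits $U_i(v_j)$ are finite; let $D$ be an upper bound for their diameters. This $D$ controls how much $\phi$ and $\psi$ depend on the chosen sections.

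Next I would show $\phi$ (and, symmetrically, $\psi$) is \emph{coarsely Lipschitz}; this is the one step where the compact-open hypothesis really does the work. Two observations: first, $S_1:=\{h\in G\mid (v_1,h(v_1))\in E(\Gamma_1)\}$ is compact, because $v_1$ has only finitely many neighbours and the elements carrying $v_1$ to a fixed neighbour form one left coset of the compact group $U_1$, so $S_1$ is a finite union of compact cosets; second, the displacement function $g\mapsto d_2(v_2,g(v_2))$ is constant on each right coset $gU_2$ (as $U_2$ fixes $v_2$ and acts by automorphisms), hence locally constant since $U_2$ is open, hence bounded on the compact set $S_1$ — call that bound $M$. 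Then for adjacent $w\sim w'$ in $\Gamma_1$, writing $w=g(v_1)$ forces $w'=gh(v_1)$ for some $h\in S_1$, whence $d_2(\phi(w),\phi(w'))\le d_2(g(v_2),gh(v_2))+2D=d_2(v_2,h(v_2))+2D\le M+2D$; concatenating along a $\Gamma_1$-geodesic gives $d_2(\phi(w),\phi(w'))\le (M+2D)\,d_1(w,w')$ for all $w,w'$, and likewise $d_1(\psi(x),\psi(x'))\le (M'+2D)\,d_2(x,x')$ for a constant $M'$.

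To finish I would check that $\phi$ and $\psi$ are coarse inverses and deduce the remaining quasi-isometry axioms formally. If $w=g(v_1)$ then $\phi(w)=g(v_2)$ and $\psi(\phi(w))=gu(v_1)$ for some $u\in U_2$, so $d_1(\psi(\phi(w)),w)\le D$; thus $\psi\circ\phi$ lies within $D$ of $\id_{V(\Gamma_1)}$, and symmetrically $\phi\circ\psi$ within $D$ of $\id_{V(\Gamma_2)}$. The latter shows $\phi$ is coarsely surjective (every $x$ is within $D$ of $\phi(\psi(x))$), and the former yields the lower bound $d_1(w,w')\le d_1(\psi\phi(w),\psi\phi(w'))+2D\le (M'+2D)\,d_2(\phi(w),\phi(w'))+2D$. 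Combined with the coarse-Lipschitz upper bound of the previous paragraph, $\phi$ is a quasi-isometric embedding with quasi-dense image, hence a quasi-isometry $\Gamma_1\to\Gamma_2$.

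The main obstacle is precisely the boundedness claim of the second paragraph: the rest is routine coarse bookkeeping, but the statement that a compact subset of $G$ moves the base vertex of $\Gamma_2$ only a bounded amount is where compactness of $U_1$ (making $S_1$ compact) and openness of $U_2$ (making the displacement function locally constant, hence continuous on a compact set) are both indispensable. I would also be careful to record at the outset that $G$ acts on each graph by automorphisms, so the action is isometric for the path metric and the displacement function is genuinely $U_2$-bi-invariant, and that a locally finite connected graph is a proper geodesic space, so that ``quasi-isometry'' carries its usual meaning here.
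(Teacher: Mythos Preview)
The paper does not supply its own proof of this statement; it is simply quoted from \cite[Theorem~2.7]{Kron08} as background. So there is nothing in the paper to compare against.

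That said, your argument is correct and is essentially the standard one. The two genuinely substantive points---that $S_1$ is compact (finite union of left $U_1$-cosets, by local finiteness and compactness of $U_1$) and that the displacement $g\mapsto d_2(v_2,g(v_2))$ is locally constant (right-$U_2$-invariant, $U_2$ open) hence bounded on $S_1$---are exactly where the compact-open hypothesis enters, and you have isolated them cleanly. The rest is, as you say, routine coarse bookkeeping. One cosmetic remark: if you fix $g=\sigma_1(w)$ from the outset, the adjacent-vertex estimate sharpens from $M+2D$ to $M+D$, since $\phi(w)=g(v_2)$ on the nose and only $\phi(w')$ picks up a $U_1$-ambiguity; but this changes nothing.
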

Rough Cayley graphs allow the study of compactly generated locally compact groups using geometric group theory. See \cite{Kron08} for analogues of Stalling's Theorem and accessibility. It is natural to consider the relationship between the geometric group theory of these groups and the general topological group theory. For example, in \cite{Baumgartner12} it is shown that every t.d.l.c. group with hyperbolic rough Cayley graph has flat rank at most $1$.

Observe that since being $\delta$-hyperbolic (up to change in $\delta$) is a quasi-isometry invariant: if $G$ has a $\delta$-hyperbolic rough Cayley graph, then any rough Cayley graph of $G$ is $\delta'$-hyperbolic for some $\delta'\ge 0$.

\begin{definition}
	A \emph{hyperbolic group} is a topological group with $\delta$-hyperbolic rough Cayley graph. 
\end{definition}

The following theorem provides a source of examples which include groups with sufficiently transitive action on hyperbolic buildings.
\begin{proposition}[{\cite[Proposition 14]{Baumgartner12}}]
	Suppose $G$ is a t.d.l.c. group acting cocompactly with compact open point stabilisers on a connected  $\delta$-hyperbolic $M_k$-polyhedral complex. Then $G$ is a hyperbolic group.
\end{proposition}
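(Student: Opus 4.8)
The plan is to run a Milnor--Švarc argument adapted to locally compact groups. Writing $X$ for the complex, I would first show that $X$ is a proper geodesic metric space on which $G$ acts cocompactly and \emph{metrically properly}; deduce from the locally compact version of the Milnor--Švarc lemma that $G$ is compactly generated and that an orbit map $G\to X$ is a quasi-isometry when $G$ is equipped with a word metric for a compact generating set; and finally transport $\delta$-hyperbolicity of $X$ along a chain of quasi-isometries to a rough Cayley graph of $G$, using the quasi-isometry invariance of hyperbolicity already observed in this section.

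Step one: $X$ is proper. Cocompactness gives only finitely many isometry types of cells, so $X$ is a complete geodesic space by standard facts about $M_k$-polyhedral complexes; its $\delta$-hyperbolicity is hypothesised. For properness it suffices that $X$ be locally finite. For any cell $\sigma$ with barycentre $b_\sigma$, an isometry fixing $b_\sigma$ must preserve the unique cell whose interior contains $b_\sigma$, so the setwise stabiliser $G_{\{\sigma\}}$ contains the open subgroup $G_{b_\sigma}$ and is open; hence the $G$-set of cells of each fixed dimension is discrete. Since the point stabiliser $G_v$ is compact, its orbit in this discrete set is finite, and since $G$ has only finitely many orbits of cells, each vertex $v$ lies in finitely many cells. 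Thus $X$ is locally finite, hence proper.

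Step two: the action is metrically proper and Milnor--Švarc applies. Given a compact $K\subseteq X$, cover it by finitely many closed cells; by local finiteness only finitely many cells meet these, so $\{g\in G : gK\cap K\neq\emptyset\}$ lies in a finite union of cosets of setwise cell stabilisers $G_{\{\sigma\}}$. Each such stabiliser is compact, being a finite extension of the compact group $G_{b_\sigma}$ with quotient injecting into the finite isometry group of the polytope $\sigma$. Hence $\{g : gK\cap K\neq\emptyset\}$ is relatively compact, i.e.\ the action is metrically proper. Together with cocompactness, the Milnor--Švarc lemma for locally compact groups shows $G$ is compactly generated and that, for a compact generating set $S$ and any vertex $x_0$, the orbit map $(G,d_S)\to X$, $g\mapsto g x_0$, is a quasi-isometry.

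Step three: conclusion. Since $G$ is now a compactly generated t.d.l.c. group, the point stabiliser $U=G_{x_0}$ is a compact open subgroup, so by Theorem~\ref{thm:RCay} $G$ has a rough Cayley graph $\Gamma$, which by the construction in its proof may be taken with vertex set $G/U$ and edges coming from $S$. The quotient map $G\to G/U$ is a quasi-isometry because $U$ is compact, and $\Gamma$ is quasi-isometric to $(G,d_S)$; composing with step two, $\Gamma$ is quasi-isometric to $X$. Both $\Gamma$ and $X$ are geodesic and $X$ is $\delta$-hyperbolic, so $\Gamma$ is $\delta'$-hyperbolic for some $\delta'\geq 0$, and therefore $G$ is a hyperbolic group. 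I expect the main obstacle to be step two: establishing metric properness of the action purely from the hypothesis that point stabilisers are compact open is where the local finiteness of $X$ and the compactness of the setwise cell stabilisers are genuinely needed; once these are in hand the rest is bookkeeping with quasi-isometries.
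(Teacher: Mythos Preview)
The paper does not prove this proposition at all: it is stated as a citation of \cite[Proposition 14]{Baumgartner12} and used as a black box to supply examples. So there is no ``paper's own proof'' to compare against; your task here was to reconstruct an argument the author chose to outsource.

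That said, your plan is the natural one and is almost certainly what the cited source does: verify that $X$ is proper, that the action is metrically proper and cocompact, invoke the locally compact Milnor--\v{S}varc lemma to get compact generation and a quasi-isometry $G\to X$, and then pass hyperbolicity to a rough Cayley graph. The logic of Steps~2 and~3 is sound.

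One point in Step~1 deserves tightening. You argue that each $G_v$-orbit of cells is finite (correct, since cell stabilisers are open and $G_v$ is compact), and then write ``since $G$ has only finitely many orbits of cells, each vertex $v$ lies in finitely many cells.'' The passage from finitely many $G$-orbits of cells to finitely many $G_v$-orbits of cells \emph{containing $v$} is not automatic as stated. The clean fix is to count $G$-orbits of incidence pairs $(w,\sigma)$ with $w$ a vertex of $\sigma$: there are finitely many such orbits (finitely many cell orbits, each cell has finitely many vertices), and two pairs $(v,\sigma)$, $(v,\sigma')$ lie in the same $G$-orbit if and only if $\sigma,\sigma'$ lie in the same $G_v$-orbit. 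Hence finitely many $G_v$-orbits of cells through $v$, each finite, so $v$ lies in finitely many cells. With this adjustment your argument goes through.
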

There are only a few possibilities for isometries of $\delta$-hyperbolic metric spaces.
\begin{definition}\label{def:isom_class}
	Suppose $g$ is an isometry of a $\delta$-hyperbolic metric space $X$. 
	\begin{enumerate}[label = (\roman{*})]
		\item If there is point $x\in X$ such that the orbit of $x$ under the action of $g$ is bounded, we say $g$ is \emph{elliptic}.
		\item If $g$ is not elliptic and there exists a unique $\varepsilon\in\partial X$ fixed by $g$, then we say $g$ is \emph{parabolic}.
		\item If $g$ is not elliptic but $g$ fixes precisely two distinct points in $\partial X$, we call $g$ \emph{hyperbolic}. 
	\end{enumerate}
	
	For a compactly generated t.d.l.c. group, we say $g\in G$ is \emph{elliptic}, \emph{parabolic} or \emph{hyperbolic} if $g$ is an elliptic, parabolic or hyperbolic automorphism of some rough Cayley graph. It can be shown that this is independent of choice of rough Cayley graph using the quasi-isometry between any two rough Cayley graphs.
\end{definition}
\begin{theorem}{\cite[Theorem 1 and Corollary 4]{Woess93}}\label{thm:class_hyp_action}
	Suppose $g$ is an isometry of a $\delta$-hyperbolic metric space $X$. Then $g$ is either elliptic, parabolic or hyperbolic. If $g$ is hyperbolic, then the two unique ends fixed by $g$ can be recovered as
	\[\omega_{+}(g) = \lim_{n\to\infty}g^n(x)\hbox{ and } \omega_{-}(g) = \lim_{n\to\infty}g^{-n}(x),\]
	where $x\in X$.
\end{theorem}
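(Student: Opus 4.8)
The plan is to split into cases according to the \emph{stable translation length} $\tau(g):=\lim_{n\to\infty}d(x,g^nx)/n$, where $x\in X$ is a fixed basepoint. First I would record the elementary preliminaries: since $g$ is an isometry, $d(x,g^{-n}x)=d(g^nx,x)=d(x,g^nx)$, and $n\mapsto d(x,g^nx)$ is subadditive, so $\tau(g)$ exists by Fekete's lemma and equals $\inf_{n\ge 1}d(x,g^nx)/n$; moreover the orbit $\{g^nx:n\in\mathbb{Z}\}$ is bounded if and only if $(d(x,g^nx))_{n\ge 0}$ is bounded. If the orbit is bounded then $g$ is elliptic by Definition~\ref{def:isom_class}, so it remains to treat the case of an unbounded orbit and to show that then $g$ fixes either exactly one boundary point --- in which case it is parabolic --- or exactly two, in which case I must also identify the fixed points with $\lim_n g^{\pm n}x$.

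For the case $\tau(g)>0$, I would observe that $d(g^ix,g^jx)=d(x,g^{|i-j|}x)\ge\tau(g)\,|i-j|$ while trivially $d(g^ix,g^jx)\le d(x,gx)\,|i-j|$, so $n\mapsto g^nx$ is a bi-infinite quasi-geodesic. Using stability of quasi-geodesics in $\delta$-hyperbolic spaces \cite[Chapter~III.H]{Bridson99} and the standard estimate (Theorem~\ref{thm:stdest}), I would deduce: $(g^nx)_{n>0}$ and $(g^{-n}x)_{n>0}$ converge at infinity to points $\omega_+,\omega_-\in\partial X$; $\omega_+\neq\omega_-$, because the geodesic from $g^{-n}x$ to $g^nx$ fellow-travels the quasi-geodesic through $x$ and hence keeps $(g^nx\mid g^{-n}x)_x$ bounded; and $g$ fixes $\omega_\pm$, since reindexing does not change a limit at infinity. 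Theorem~\ref{thm:BoundaryIsVisual} then yields a bi-infinite geodesic $\gamma$ between $\omega_-$ and $\omega_+$; as $g\gamma$ is another such geodesic, $g$ preserves the $28\delta$-neighbourhood of $\gamma$, and --- swapping the two ends would produce a bounded orbit and hence ellipticity --- it acts on $\gamma$ as a rough translation whose amplitude, being a quasi-morphism on $\mathbb{Z}$, is linear with absolute value $\tau(g)>0$. Invoking the consequent North--South dynamics of $g$ on $\partial X$, every $g$-fixed point of $\partial X$ equals $\omega_+$ or $\omega_-$, so $g$ is hyperbolic and $\omega_\pm(g)=\lim_n g^{\pm n}x$, as asserted.

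It remains to handle $\tau(g)=0$ with unbounded orbit. Here I would use that $X$, being a locally finite connected graph, is proper, so $X\cup\partial X$ is compact and the unbounded orbit accumulates at some $\xi\in\partial X$, say $g^{n_k}x\to\xi$. Since $d(g^{n_k}x,g^{n_k+1}x)=d(x,gx)$ is constant while $d(x,g^{n_k}x)\to\infty$, we have $(g^{n_k}x\mid g^{n_k+1}x)_x\to\infty$, hence $g^{n_k+1}x\to\xi$ and therefore $g\xi=\xi$. If $g$ also fixed a boundary point $\xi'\neq\xi$, the axis argument of the previous paragraph would force $g$ to act as a rough translation of amplitude $\tau(g)=0$ along a geodesic joining $\xi$ and $\xi'$, hence to have bounded orbits and be elliptic, contrary to assumption. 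So $g$ fixes exactly one boundary point and is parabolic, which completes the trichotomy.

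I expect the main obstacle to be the $\tau(g)>0$ case: upgrading the quasi-geodesic orbit to a genuine axis with explicit control of the fellow-traveller constants, checking that the translation amplitude along the axis has absolute value $\tau(g)$, and establishing the North--South dynamics that confines the fixed set to exactly two points. The parabolic case is comparatively soft once properness of $X$ is exploited; for a general, possibly non-proper, hyperbolic $X$ one would replace the compactness step by a direct Gromov-product estimate, as in \cite{Woess93}.
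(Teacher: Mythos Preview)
The paper does not prove this theorem: it is quoted from \cite{Woess93} as a preliminary result and is stated without argument, so there is no ``paper's own proof'' to compare against.  Your outline is the standard route to the trichotomy (split on the stable translation length $\tau(g)$, use the Morse lemma for $\tau(g)>0$, and an accumulation argument for $\tau(g)=0$), and it is broadly sound.

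Two points are worth tightening.  First, in the parabolic step you invoke that $X$ is ``a locally finite connected graph'' and hence proper; but the theorem as stated is for an arbitrary $\delta$-hyperbolic metric space, so this is an added hypothesis, not a fact.  You do flag this at the end, but as written the proof only covers the proper case.  Second, in that same step your reduction ``if $g$ fixed a second point $\xi'$ then the axis argument of the previous paragraph applies'' is not quite the previous paragraph: there the \emph{orbit} was the quasi-geodesic, whereas here you must instead argue directly from the $g$-invariance (up to $28\delta$) of a geodesic $\gamma$ joining $\xi$ and $\xi'$ that the induced near-translation on $\gamma$ has amplitude comparable to $\tau(g)=0$, forcing bounded orbits.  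That argument is correct but deserves to be spelled out separately, since the quasi-geodesic input from the $\tau(g)>0$ case is not available.
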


Typically, parabolic automorphisms only exist in spaces that are in some sense large. An obstruction to their existence is the if isometry group of the space does acts properly and cocompactly. 

\begin{theorem}[{\cite[Theorem 22]{Baumgartner12}}]
	A hyperbolic group does not contain any parabolic elements.
\end{theorem}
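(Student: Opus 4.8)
The plan is to argue by contradiction: assume some $g\in G$ is parabolic, where $G$ acts vertex transitively with compact open stabilisers on its locally finite $\delta$-hyperbolic rough Cayley graph $\Gamma$ (so $\Gamma$ is a proper metric space, since closed balls are finite), and fix a vertex $v_0$ and let $B$ be the maximal vertex degree of $\Gamma$. By Theorem~\ref{thm:class_hyp_action}, $g$ fixes a unique boundary point $\xi\in\partial\Gamma$; as $g^{-1}$ is also non-elliptic and has the same fixed-point set, it too is parabolic with fixed point $\xi$, so $g^nv_0\to\xi$ and $g^{-n}v_0\to\xi$ as $n\to\infty$. I would first record two facts. $(1)$ $d(v_0,g^nv_0)\to\infty$ as $|n|\to\infty$: otherwise a subsequence $(g^{n_k})_k$ lies in the compact set $\{h:d(v_0,hv_0)\le C\}$, and passing to a convergent subsequence gives $g^{n_{k+1}-n_k}\to e$, so $g^m$ lies in some compact open subgroup for an $m\ge1$, forcing $\overline{\langle g\rangle}$ compact and $g$ elliptic. $(2)$ The translation length $\tau(g):=\lim_n\tfrac1n d(v_0,g^nv_0)=\inf_n\tfrac1n d(v_0,g^nv_0)$ is $0$: otherwise $(g^nv_0)_{n\in\bZ}$ is a bi-infinite quasigeodesic, which by stability of quasigeodesics in hyperbolic spaces fellow-travels a bi-infinite geodesic whose two distinct endpoints are fixed by $g$, making $g$ hyperbolic.

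The central step is to feed geometry into the scale function. Writing $a_n:=d(v_0,g^nv_0)$, the stabiliser $G_{v_0}$ preserves spheres about $v_0$, so the $G_{v_0}$-orbit of $g^nv_0$ and the $G_{g^nv_0}$-orbit of $v_0$ each have at most $B^{a_n}$ points; hence in the metric of $\COS(G)$ we get $d\bigl(g^n(G_{v_0}),G_{v_0}\bigr)=d\bigl(G_{g^nv_0},G_{v_0}\bigr)\le 2a_n\log B$. By Lemma~\ref{lem:basic_cos_action}, for any $W\in\COS(G)$ one has $\log s(g^n)+\log s(g^{-n})\le d\bigl(g^n(W),W\bigr)$; taking $W=G_{v_0}$ and using $s(g^n)=s(g)^n$ gives $n\log s(g)\le 2a_n\log B$ for all $n$, so by $(2)$, $\log s(g)\le 2(\log B)\,\tau(g)=0$, i.e. $s(g)=1$; symmetrically $s(g^{-1})=1$. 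Thus $g$ is uniscalar, and Proposition~\ref{prop:scale_prop} provides $U\in\COS(G)$ with $gUg^{-1}=U$.

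Finally I would exploit that $g$ normalises the compact open subgroup $U$. Then $H:=\langle g\rangle U=\bigcup_{n\in\bZ}g^nU$ is a subgroup with $U$ normal, open and compact; it is closed, because by $(1)$ no coset $g^nU$ with $|n|$ large can contribute a limit point. If $\langle g\rangle\cap U\ne\{e\}$ then $g^k\in U$ for some $k\ge1$ and $H=\bigcup_{i=0}^{k-1}g^iU$ would be compact, again making $g$ elliptic; so $H/U\cong\bZ$, i.e. $H$ is compact-by-$\bZ$ and hence quasi-isometric to $\bR$. The $H$-action on the orbit $O:=H\cdot v_0\subseteq\Gamma$ is cobounded and, by $(1)$, metrically proper (the set of $h\in H$ with $d(v_0,hv_0)\le R$ meets only finitely many cosets $g^nU$), so the orbit map $H\to O$ is a quasi-isometry and $O$ is a quasi-line in $\Gamma$. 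A quasi-line in a $\delta$-hyperbolic graph stays at bounded Hausdorff distance from a bi-infinite geodesic, so it has two distinct boundary endpoints; yet every point of $O$ has the form $g^nuv_0$ with $u\in U$ and $d(g^nuv_0,g^nv_0)=d(uv_0,v_0)$ bounded, so as $n\to+\infty$ and as $n\to-\infty$ the points of $O$ both converge to $\lim g^{\pm n}v_0=\xi$. Hence the two endpoints of $O$ coincide with $\xi$ — a contradiction, so $G$ has no parabolic element.

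The hard part will be the scale estimate and the inequality relating $s(g^n)$ to the displacement $a_n$: the crux is that, because vertex stabilisers are compact open and $\Gamma$ is locally finite, the purely algebraic invariant $s(g^n)$ grows at most exponentially in the geometric displacement, so the sublinear growth forced by $\tau(g)=0$ collapses the scale to $1$. The remaining inputs — the trichotomy of Theorem~\ref{thm:class_hyp_action}, stability of quasigeodesics (as in \cite{Bridson99}, used in $(2)$ and for the endpoints of the quasi-line), and Proposition~\ref{prop:scale_prop} — then combine routinely, the only further care being the standard totally disconnected bookkeeping behind $(1)$ and the closedness and properness claims for $H$.
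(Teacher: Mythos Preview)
The paper does not itself prove this theorem: it is quoted from \cite[Theorem~22]{Baumgartner12} with no argument supplied, so there is no in-paper proof to compare your attempt against.

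That said, your proposal has a genuine gap in the final paragraph. Having established in fact~(2) that $\tau(g)=\lim_n a_n/n=0$, you then claim the orbit map $H\to O\subset\Gamma$, $h\mapsto hv_0$, is a quasi-isometry. These two assertions are incompatible: if the orbit map were a $(\lambda,C)$-quasi-isometric embedding, then evaluating at $h=g^n$ --- which has word length $n$ in $H$ relative to the compact generating set $U\cup\{g^{\pm1}\}$ --- would give $a_n\ge n/\lambda-C$, hence $\tau(g)\ge 1/\lambda>0$, contradicting~(2). The mistake is the implicit appeal to a \v{S}varc--Milnor principle: that lemma needs the target to be a geodesic (or length) metric space, whereas $O$ with the subspace metric from $\Gamma$ is not one, and metric properness plus coboundedness alone cannot manufacture the lower quasi-isometry inequality. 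Consequently the intended contradiction --- two distinct endpoints of a quasi-line both equal to $\xi$ --- never gets off the ground, because there is no quasi-line.

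Your intermediate scale computation, that $\tau(g)=0$ forces $s(g)=s(g^{-1})=1$ via $n\log s(g)\le 2a_n\log B$ and hence $g$ normalises some compact open $U$, is correct and pleasant. But it does not close the argument: normalising a compact open subgroup is perfectly consistent with acting parabolically on a general hyperbolic space. The missing ingredient is a genuinely geometric step that exploits the \emph{cocompactness} of the $G$-action on $\Gamma$ --- for instance, showing that for each $n$ some $G$-conjugate of $g^n$ moves $v_0$ by a uniformly bounded amount, and then combining this with properness of the action to contradict fact~(1). Some input of that kind is needed beyond what you have written.
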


We study asymptotic classes of elements in a t.d.l.c. hyperbolic groups. It is useful to know that elliptic elements do not move towards infinity.

\begin{proposition}[{\cite[Proposition  20]{Baumgartner12}}]\label{prop:EllipAndUniscalar}
	Suppose $G$ is a t.d.l.c. hyperbolic group and $g\in G$ is elliptic. Then $g$ is uniscalar.
\end{proposition}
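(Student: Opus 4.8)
The plan is to produce, directly and without invoking hyperbolicity at all, a compact open subgroup that is normalised by $g$; the conclusion will then follow immediately from Proposition \ref{prop:scale_prop}(ii). The only real input is the combinatorial meaning of ellipticity on a locally finite graph.

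First I would fix a rough Cayley graph $\Gamma$ for $G$ on which $g$ acts as an elliptic automorphism; such a $\Gamma$ exists by the definition of an elliptic element (Definition \ref{def:isom_class}) together with Theorem \ref{thm:RCay}. By hypothesis there is a vertex $x\in V(\Gamma)$ whose orbit $O := \{g^{n}(x)\mid n\in\bZ\}$ is bounded in the graph metric. Since $\Gamma$ is locally finite, every ball of finite radius contains only finitely many vertices, so $O$ is in fact a \emph{finite} set, and $g$ permutes it.

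Next I would set $U := \bigcap_{v\in O}G_{v}$, the pointwise stabiliser of $O$. Each $G_{v}$ is compact and open because $G$ acts with compact open vertex stabilisers, and $O$ is finite, so $U\in\COS(G)$. Since for any graph automorphism one has $gG_{v}g^{-1} = G_{g(v)}$, and $g$ permutes $O$, we get
\[
gUg^{-1} = \bigcap_{v\in O} gG_{v}g^{-1} = \bigcap_{v\in O} G_{g(v)} = \bigcap_{w\in O} G_{w} = U .
\]
Thus the inner automorphism determined by $g$ satisfies $g(U) = U = g^{-1}(U)$, so Proposition \ref{prop:scale_prop}(ii) gives $s(g) = 1 = s(g^{-1})$, i.e. $g$ is uniscalar.

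Honestly there is no serious obstacle: the whole argument rests on the elementary observation that a bounded orbit in a locally finite graph is finite, which is exactly what lets us intersect only finitely many vertex stabilisers and remain inside $\COS(G)$. The one point to state carefully is that ``elliptic'' is a property of $g$ as an element of $G$ which a priori refers to \emph{some} rough Cayley graph; one should recall (Definition \ref{def:isom_class}) that the choice is immaterial, though for this proof it suffices to work with any single $\Gamma$ witnessing ellipticity. It is worth remarking that hyperbolicity of $G$ plays no role here, so the statement in fact holds for every compactly generated t.d.l.c.\ group.
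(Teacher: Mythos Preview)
The paper does not supply its own proof of this proposition; it is simply quoted from \cite[Proposition 20]{Baumgartner12}. Your argument is correct and is essentially the standard one: a bounded $\langle g\rangle$-orbit in a locally finite graph is finite, and the pointwise stabiliser of a finite set of vertices is a compact open subgroup normalised by $g$, whence Proposition \ref{prop:scale_prop}(ii) applies. One cosmetic point worth tightening: Definition \ref{def:isom_class} speaks of a \emph{point} $x\in X$ rather than a vertex, but since $G$ acts by graph automorphisms every point lies within distance $1/2$ of a vertex whose $\langle g\rangle$-orbit is bounded if and only if that of $x$ is, so passing to $x\in V(\Gamma)$ is harmless. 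Your closing observation that hyperbolicity plays no role is also correct; the argument goes through for any compactly generated t.d.l.c.\ group acting on a rough Cayley graph.
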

\begin{corollary}\label{cor:ell_and_inf}
	Suppose $G$ is a t.d.l.c. hyperbolic group and $g\in G$ is elliptic. Then $g$ does not move towards infinity.
\end{corollary}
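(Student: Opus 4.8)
The plan is to observe that this corollary is an immediate consequence of Proposition \ref{prop:EllipAndUniscalar} together with the relevant definitions. Recall that by convention an element $g\in G$ is identified with the inner automorphism it induces, and that an automorphism $\alpha$ of $G$ \emph{moves towards infinity} precisely when $s(\alpha)>1$. So the claim ``$g$ does not move towards infinity'' is simply the assertion that $s(g)=1$.

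Now suppose $g\in G$ is elliptic. By Proposition \ref{prop:EllipAndUniscalar}, $g$ is uniscalar, which by definition means $s(g)=1=s(g^{-1})$. In particular $s(g)=1$, hence $g$ does not move towards infinity, which is exactly what we wanted. That is the entire argument; there is essentially no obstacle, since all the work has already been done in Proposition \ref{prop:EllipAndUniscalar} (quoted from \cite{Baumgartner12}).

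If one wished to avoid quoting Proposition \ref{prop:EllipAndUniscalar} and argue from first principles, one could instead use Proposition \ref{prop:scale_prop}(i): pick a rough Cayley graph $\Gamma$ on which $g$ acts elliptically, so that the $g$-orbit of a base vertex $v$ is bounded and therefore finite (by local finiteness); a short index computation then shows that the compact open subgroup $G_v$ contains a conjugate $gUg^{-1}\le U$ for a suitable finite-index subgroup $U$, whence $s(g)=1$ by Proposition \ref{prop:scale_prop}(i). This alternative is strictly more work than simply invoking Proposition \ref{prop:EllipAndUniscalar}, so the short proof above is preferable.
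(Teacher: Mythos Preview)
Your main argument is correct and is exactly the paper's approach: invoke Proposition~\ref{prop:EllipAndUniscalar} to get that $g$ is uniscalar, hence $s(g)=1$, hence $g$ does not move towards infinity. (Your optional alternative sketch is a bit garbled as written---the phrase ``$G_v$ contains a conjugate $gUg^{-1}\le U$'' doesn't parse; the clean version is to take $U$ to be the pointwise stabiliser of the finite $g$-orbit of $v$, which $g$ then normalises---but since you rightly discard it in favour of the short proof, this does not affect the proposal.)
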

\begin{proof}
	Uniscalar elements do not move towards infinity as they stabilise a compact open subgroup, see Proposition \ref{prop:scale_prop}. Elliptic elements are uniscalar by Proposition \ref{prop:EllipAndUniscalar}.
\end{proof}

\section{An invariant geodesic for a hyperbolic group element}\label{sec:InvariantGeodesic}

The purpose of this section is to prove the following theorem which is of independent interest.
\begin{theorem}\label{thm:axis}
	Suppose $g$ is a hyperbolic automorphism of a locally finite $\delta$-hyperbolic graph $\Gamma$. Then there exists a bi-infinite geodesic $\gamma$ in $\Gamma$ and $n\in\bN$ such that $g^n$ acts by translation along $\gamma$. 
\end{theorem}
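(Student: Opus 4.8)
The plan is to locate a quasi-axis for $g$ from its two fixed boundary points, pass to a suitable power of $g$, and straighten the quasi-axis into a genuine invariant geodesic; the last step uses the discreteness of the metric on $\Gamma$ together with a cocompactness argument in the spirit of \cite{Delzant91}. Since $g$ is hyperbolic it is not elliptic and fixes exactly the two distinct points $\omega_{+}(g)=\lim_{k\to\infty}g^{k}(v)$ and $\omega_{-}(g)=\lim_{k\to\infty}g^{-k}(v)$ of $\partial\Gamma$, for every vertex $v$ (Definition \ref{def:isom_class}, Theorem \ref{thm:class_hyp_action}). As $g$ is a graph automorphism, hence an isometry, fixing $\omega_{\pm}(g)$, Theorem \ref{thm:BoundaryIsVisual} shows that any bi-infinite geodesic $\beta$ from $\omega_{-}(g)$ to $\omega_{+}(g)$ is carried by each $g^{k}$ to another such geodesic, so $g^{k}(\beta)$ lies in the $28\delta$-neighbourhood of $\beta$ for all $k\in\bZ$. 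Consequently $\langle g\rangle$ preserves a bounded neighbourhood $Y$ of $\beta$, a connected locally finite $\delta'$-hyperbolic graph, on which it acts freely (no power of $g$ fixes a vertex, else $g$ would be elliptic) and, as one checks using Theorem \ref{thm:stdest} to control nearest-point projections onto $\beta$, cocompactly. Moreover, since $g$ is not elliptic the displacement $d(v,g^{k}v)$ is unbounded in $k$, and since $\omega_{+}(g)\ne\omega_{-}(g)$ the Gromov products $(g^{-k}v\mid g^{k}v)_{v}$ remain bounded; a short subadditivity argument then gives that the translation length $\tau:=\lim_{k\to\infty}d(v,g^{k}v)/k$ is positive and that $d(w,g^{k}w)\ge k\tau$ for all $w\in V(\Gamma)$ and $k\ge 1$.

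Next I would pass to a power. For each $k$ the quantity $\ell_{k}:=\min_{w\in V(\Gamma)}d(w,g^{k}w)$ is attained, being the least element of a nonempty set of non-negative integers, and $k\tau\le\ell_{k}$. Suppose for the moment that $\ell_{n}=n\tau$ for some $n\in\bN$; put $h=g^{n}$, pick $v_{0}$ with $d(v_{0},hv_{0})=\ell_{n}$, and pick a geodesic segment $\sigma$ from $v_{0}$ to $hv_{0}$. For every $k\ge 1$ we then have $k\ell_{n}=kn\tau\le\ell_{kn}\le d(v_{0},h^{k}v_{0})\le k\ell_{n}$, so $d(v_{0},h^{k}v_{0})=k\ell_{n}$, and it follows that the bi-infinite path $\gamma$ obtained by concatenating the translates $h^{k}(\sigma)$, $k\in\bZ$, is a genuine bi-infinite geodesic on which $h=g^{n}$ acts by translation by $\ell_{n}$; this is the geodesic and the power the theorem asks for.

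The remaining, and main, difficulty is therefore to prove that $\ell_{n}=n\tau$ for some $n$. For this I would work in the finite graph $Y/\langle g\rangle$ and run a pumping argument producing a $g^{n}$-periodic bi-infinite geodesic — equivalently a vertex with $g^{n}$-displacement exactly $n\tau$ — which reduces the problem essentially to the cocompact situation of \cite{Delzant91} and uses crucially that $\Gamma$ carries an integer-valued metric. I expect this straightening step to be the real obstacle: a minimal-displacement orbit only yields, a priori, a $g^{n}$-invariant bi-infinite path that is a long local geodesic and hence (by \cite[Chapter III.H Lemma 1.13]{Bridson99}) a quasi-geodesic fellow-travelling a genuine bi-infinite geodesic from $\omega_{-}(g)$ to $\omega_{+}(g)$, and upgrading ``quasi-invariant quasi-geodesic'' to ``genuinely invariant genuine geodesic'' is exactly where the discreteness of the vertex set and the cocompactness of $\langle g\rangle$ on $Y$ must be exploited.
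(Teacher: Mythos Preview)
Your reduction is correct as far as it goes: if some $n$ satisfies $\ell_n=n\tau$, then the concatenation of the $h^{k}(\sigma)$ is indeed a bi-infinite geodesic on which $g^{n}$ translates. The problem is that you never prove such an $n$ exists, and this is not a technicality---it is essentially the whole theorem. Subadditivity of $k\mapsto\ell_k$ only gives $\tau=\inf_k\ell_k/k$; for integer-valued subadditive sequences the infimum need not be attained (e.g.\ $\ell_k=k+1$). In hyperbolic graphs the statement that $\tau$ is rational and achieved by some power is precisely Delzant's theorem for Cayley graphs, which is what the present theorem generalises, so invoking ``the cocompact situation of \cite{Delzant91}'' is circular. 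Your final paragraph candidly flags this as ``the real obstacle'' and gestures at a pumping argument in $Y/\langle g\rangle$, but supplies no mechanism for selecting, among the many geodesics between two far-apart $g$-translates, one that is compatible under restriction---and that selection is exactly the missing idea.

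The paper's proof avoids displacement minimisation entirely. It first establishes (Lemma~\ref{lem:ballsonaxis}) that $\langle g^{n}\rangle$ acts cocompactly on the union $A(g)$ of all bi-infinite geodesics joining $\omega_{\pm}(g)$, and that a fixed ball meets every such geodesic with controlled crossing behaviour. Cocompactness lets one choose a finite set of $\langle g^{n}\rangle$-orbit representatives for $E(A(g))$ and hence a $g^{n}$-\emph{invariant colouring} of $E(A(g))$; after a further power one arranges that each vertex sees each colour at most once. This gives a short-lex order on paths in $A(g)$ under which the short-lex geodesic between two vertices is \emph{unique}. One then forms the inverse system $X_i$ of short-lex geodesics between the balls $g^{-i}(B)$ and $g^{i}(B)$; uniqueness bounds $|X_i|$ uniformly, so the inverse limit $Y$ is a nonempty finite set of bi-infinite geodesics in $A'(g)$. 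Since the colouring is $g$-invariant, $g$ permutes $Y$, and a further power fixes an element of $Y$. The device you are missing is this invariant colouring plus short-lex uniqueness: it is what converts ``many geodesics, each almost invariant'' into ``finitely many canonical geodesics, genuinely permuted''.
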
 

Our argument is based on a proof found in \cite{Delzant91} which proves the same result but for a finitely generated hyperbolic group acting on a usual Cayley graph. The main complication in generalising to the non-discrete case is the lack of $G$-invariant colouring of the rough Cayley graph. 

Suppose $g$ is a hyperbolic isometry of a locally finite $\delta$-hyperbolic graph $\Gamma$. Let $A'(g)$ to be the collection of all geodesics from $\omega_{-}(g)$ to $\omega_{+}(g)$, of which there is at least one by Theorem \ref{thm:BoundaryIsVisual}. Let $A(g)$ be the union of all geodesics in $A'(g)$. We say a geodesic $\gamma$ in $\Gamma$ is \emph{$g$-minimal} if it is a segment (sub-geodesic) of a geodesic in $A'(g)$.

\begin{lemma}\label{lem:ballsonaxis}
	Suppose $g$ is a hyperbolic automorphism of a locally finite $\delta$-hyperbolic graph $\Gamma$ and $v\in V(A(g))$. There exists a ball $B$ centred at $v$ and $n\in\bN$ such that:
	\begin{enumerate}[label = (\roman{*})]
		\item $B$ meets every geodesic in $A'(g)$\label{itm:Ai};
		\item $g^{ni}(B)\cap g^{nj}(B)\cap A(g)\neq \varnothing$ if and only if $i = j$\label{itm:Aii};
		\item Any $g$-minimal geodesic from a vertex  in $g^{ni}(B)\cap A(g)$ to a vertex in $g^{nj}(B)\cap A(g)$, where $i\le j$, meets $g^{nk}(B)$ for all $i \le k \le j$\label{itm:Aiii};
		\item $E(A(g))/\langle g^n\rangle$ is finite\label{itm:Aiv}.
	\end{enumerate} 
\end{lemma}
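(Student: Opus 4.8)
The plan is to build the ball $B$ and the exponent $n$ in stages, extracting one requirement at a time and then taking a radius large enough (and a power $g^n$ "long enough") to satisfy all of them simultaneously. First I would address \ref{itm:Ai}: by Theorem \ref{thm:BoundaryIsVisual}, any two geodesics from $\omega_-(g)$ to $\omega_+(g)$ are Hausdorff distance at most $28\delta$ apart, so if $v\in V(A(g))$ lies on some geodesic $\gamma_0\in A'(g)$, then every other geodesic in $A'(g)$ passes within $28\delta$ of $v$; hence the ball $B_0$ of radius $28\delta+1$ centred at $v$ already meets every geodesic in $A'(g)$. Increasing the radius only helps here, so this property persists for any larger ball.

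Next I would handle the "translation-like" behaviour of $g$ on $A(g)$ needed for \ref{itm:Aii} and \ref{itm:Aiii}. The key point is that since $g$ is hyperbolic, its translation length $\ell = \lim_m d(x, g^m x)/m$ along any quasi-axis is positive; more concretely, applying the Standard Estimate (Theorem \ref{thm:stdest}) and slimness, one shows $d(v, g^{m}(v))\to\infty$ as $m\to\infty$, and in fact grows linearly. So choose $n$ large enough that $d(v, g^{n}(v))$ exceeds, say, $2(28\delta) + 2(\text{radius of }B_0) + 100\delta$ — a threshold big enough that the $28\delta$-neighbourhoods of the translates $g^{ni}(\gamma_0)$ are "strictly ordered" along $A(g)$. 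With this gap, a ball $B$ of radius roughly $28\delta + C\delta$ centred at $v$ has the property that $g^{ni}(B)\cap A(g)$ and $g^{nj}(B)\cap A(g)$ are disjoint for $i\neq j$ (this is \ref{itm:Aii}), because any point of $A(g)$ lies within $28\delta$ of $\gamma_0$, on which $g^n$ acts "by a definite forward shift", and two balls of bounded radius around $g^{ni}(v)$ and $g^{nj}(v)$ on a geodesic are disjoint once the centres are far enough apart. For \ref{itm:Aiii}, I would use that a $g$-minimal geodesic from a point near $g^{ni}(\gamma_0)$ to a point near $g^{nj}(\gamma_0)$ is, by the Ribbon Lemma (Theorem \ref{thm:RibbonLemma}) applied to it and the relevant segment of $\gamma_0$, uniformly close to $\gamma_0$; since $\gamma_0$ visibly passes through every intermediate translate $g^{nk}(B_0)$ (the translates of $v$ march along $\gamma_0$ in order), enlarging $B$ by that uniform Ribbon constant makes the $g$-minimal geodesic itself meet each $g^{nk}(B)$.

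Finally, for \ref{itm:Aiv}, I would argue that $A(g)$ lies in a bounded neighbourhood of the single geodesic $\gamma_0$ (again Theorem \ref{thm:BoundaryIsVisual}: Hausdorff distance $\le 28\delta$), so $A(g)$ is a "quasi-line" of bounded width; since $\Gamma$ is locally finite, any bounded-width neighbourhood of $\gamma_0$ contains only finitely many edges per unit length, and $g^n$ translates $\gamma_0$ forward by the fixed amount $d(v,g^n v)$ up to bounded error, so a fundamental domain for $\langle g^n\rangle$ on $A(g)$ is contained in a bounded region, which by local finiteness has finitely many edges. Thus $E(A(g))/\langle g^n\rangle$ is finite. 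The main obstacle I anticipate is the bookkeeping in the second stage: making "$g^n$ shifts $A(g)$ forward along $\gamma_0$ by a definite amount" precise — i.e. controlling how $g^n$ moves a point of $A(g)$ by comparing it through its nearest-point projection to $\gamma_0$ — and choosing the radius of $B$ and the exponent $n$ in the right order so that all three of \ref{itm:Aii}, \ref{itm:Aiii} and the neighbourhood bound for \ref{itm:Aiv} hold at once; once the linear-growth/ordering statement for $d(v,g^{ni}(v))$ along $\gamma_0$ is nailed down, the rest is routine enlargement of constants.
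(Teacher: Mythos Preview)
Your plan is sound and close in spirit to the paper's, with two genuine differences in execution worth noting.

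For \ref{itm:Aiii} you propose a single direct Ribbon--Lemma comparison together with the ``translates of $v$ march along $\gamma_0$ in order'' statement. The paper instead fixes the radius $K$ of $B$ \emph{before} choosing the exponent (taking $K$ larger than the bound on the Gromov products $(g^{-n}(v)\mid g^m(v))_v$, which is finite because $\omega_-(g)\neq\omega_+(g)$), proves the ordering statement as a separate Claim about a nearest-point function $f:\bZ\to\bZ$ on $\gamma_0$, and then obtains \ref{itm:Aiii} by an \emph{induction} on $j-i$: the base case is that any $g$-minimal geodesic from $B_{-1}$ to $B_{1}$ meets $B_0$, and the inductive step is just the $\langle g\rangle$-invariance of $A(g)$ (translate by $g^{-N}$ and apply the hypothesis). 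Fixing the radius first and arguing inductively cleanly avoids the circularity you flag between the radius of $B$ and the Ribbon constant. Your direct route also works, but the way out of the circularity is to observe that the endpoints of the $g$-minimal segment lie in $A(g)$, hence are already within $28\delta$ of $\gamma_0$ regardless of the radius of $B$; so the Ribbon constant is bounded by something like $120\delta$ uniformly, and then a fixed radius $\approx 150\delta$ suffices, with $n$ chosen afterwards to force the spacing. This is exactly the bookkeeping you anticipated.

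For \ref{itm:Aiv} the paper does not argue via the quasi-line picture. Instead it derives finiteness \emph{from} \ref{itm:Aiii}: any edge of $A(g)$ lies on a $g$-minimal geodesic between two \emph{consecutive} translates $B_{i_0}$ and $B_{i_0+1}$ (otherwise \ref{itm:Aiii} would force an intermediate ball to be hit), and after translating by $g^{-Ni_0}$ there are only finitely many such geodesics by local finiteness. Your fundamental-domain argument is more elementary and independent of \ref{itm:Aiii}; the paper's is more internal to the structure already built.
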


\begin{proof}
	Choose a geodesic $(\ldots, v_{-1}, v_0, v_1,\ldots)\in A'(g)$ such that 
	\[v = v_0\hbox{,\ \ }\lim g^n(v) = \lim v_n\hbox{\ \ and\ \ }\lim g^{-n}(v) = \lim v_{-n}.\] Lemma \ref{lem:GromovBound} shows that $(g^{-n}(v)\mid g^{m}(v))_{v}$ is bounded by some $K\in\bN$. By enlarging $K$ if necessary, we assume that $K> 8\delta+2(28\delta+28\delta)$. Let $B$ be the ball of radius $K$ about $v$. Theorem \ref{thm:BoundaryIsVisual} gives \ref{itm:Ai}.
	
	Lemma \ref{lem:GromovBound} gives $m,N\in\bN$ such that $n\ge N$ implies $(g^{n}(v)\mid v_{m})_v > 3K$ and $(g^{-n}(v)\mid v_{-m})_v > 3K$. For each $n\in\bZ$ choose $f(n)\in\bZ$ such that $d(g^{n}(v),v_{f(n)})$ is minimised. Since $gA(g) = A(g)$, it follows that $d(g^n(v), v_{f(n)})\le 28\delta< K$ by Theorem \ref{thm:BoundaryIsVisual}. 
	
	\begin{claim} 
		The function $f:\bZ\to\bZ$ satisfies the following properties:
		\begin{enumerate}[label = (\alph{*})]
			\item\label{itm:1_f(n)} if $n > N$, we have $f(n)\ge 2K$;
			\item\label{itm:2_f(n)} if $n < -N$, we have $f(n)\le -2K$.
		\end{enumerate}
	\end{claim}
	\begin{proof}
		We prove the contrapositive of \ref{itm:1_f(n)}. Property \ref{itm:2_f(n)} follows via a symmetric argument. First, suppose $0\le f(n)\le 2K$. Then $v$ is within distance $2K$ of any geodesic from $v_{f(n)}$ to $g^n(v)$, indeed, $d(v,v_{f(n)}) = f(n)\le 2K$. Theorem \ref{thm:stdest} shows ${(v_{f(n)}\mid v_n)_v\le 2K}$. By choice of $N$, we must have $n \le N$. Alternatively, now suppose $f(n)< 0$. Since $d(g^n(v),v_{f(n)}) \le 28\delta$, Theorem \ref{thm:RibbonLemma} shows that any geodesic from $g^n(v)$ to $v_m$ stays within distance ${50\delta< 3K}$ of $(v_{f(n)},\ldots,v_0,\ldots, v_m)$. It follows from Theorem \ref{thm:stdest}  that $(v_{f(n)}\mid v_m)_v \le 3K$. Assumptions on $N$ show that $n\le N$. Thus \ref{itm:1_f(n)} holds.
	\end{proof}
	Since $g$ is hyperbolic, no orbit of $g$ is bounded. Enlarging $N$ if necessary, we assume that $g^N(B)\cap B = \varnothing$. Note that if $u\in g^N(B)\cap A(g)$ and $u'\in g^{-N}(B)\cap A(g)$, then any $g$-minimal geodesic from $u$ to $u'$ stays within distance $2(28\delta+28\delta)+8\delta<K$ of the geodesic $(v_{f(-N)},\ldots v_0,\ldots, v_{f(N)})$ by Theorem \ref{thm:RibbonLemma} and so must cross $B$.
	
	We make the abbreviation $B_{i}:= g^{Ni}(B)$ for $i\in\bZ$. We show that for $i\le j$, any $g$-minimal geodesic between vertices in $B_{i}$ and $B_{j}$, must cross $B_{k}$ for $i \le k\le j$. Replacing $B_i$ and $B_j$ with $g^{-Ni - 1}(B_i) = B_{-1}$ and $g^{-Ni-1}(B_j)$, we assume $i = -1$. We proceed by induction on $j$. We have already verified the case when $- 1\le j\le 1$. Suppose that for fixed $j\in\bN$, if $-1 \le k \le j$, any $g$-minimal geodesic from a vertex in $B_{-1}$ to $B_{j}$ crosses $B_{k}$. Suppose $u\in B_{-1}$, $u'\in B_{j+1}$, and let $\gamma$ be a $g$-minimal geodesic between them. Then $\gamma$ stays within distance $2(28\delta+28\delta)+8\delta$ of the geodesic $(v_{f(-N)},\ldots,v_0,\ldots v_{f(N(j+1)})$ by Theorem \ref{thm:RibbonLemma}. This implies $\gamma$ crosses $B$. Let $\gamma'$ be a segment of $\gamma$ from a vertex in $B$ to $u'$. Then $g^{-N}(\gamma')$ is a $g$-minimal geodesic from a vertex in $B_{-1}$ to a vertex in $B_{j}$. The inductive hypothesis shows that $g^{-N}(\gamma')$ crosses $B_{k}$ for $-1\le k \le j$. Hence, $\gamma'$ crosses $B_{k}$ for $0\le k\le j+1$. This shows $\gamma$ crosses $B_k$ for $-1\le k\le j+1$. This completes the induction. We have shown \ref{itm:Aiii}.
	
	For \ref{itm:Aiv}, suppose that $(u_1,u_2)\in E(A(g))$. Let $\gamma = (\ldots,u_{-1},u_0, u_1,\ldots)\in A'(g)$ be a geodesic containing $(u_1,u_2)$. Note that $B_i\cap \gamma$ is non-empty and disjoint for $i\neq j$. Hence, the set $\{j\in\bN\mid u_i\in \cup_{j\in\bZ} B_j\}$ is not bounded above or below. Choose $j_0\le 0$ maximal and $j_1\ge 1$ minimal such that $u_{j_k}\in B_{i_k}$ for some $i_0,i_1\in\bZ$. Thus, $(u_0,u_1)$ is on a geodesic between vertices in $B_{i_0}$ and $B_{i_1}$. Applying \ref{itm:Aiii}, we see that $i_0\le i_1\le i_0+1$. Thus, $g^{-i_0N}(u_0,u_1)$ is on a geodesic between vertices in $B_{0}$ and $B_0$ or $B_0$ and $B_1$. There are only finitely many such geodesics since $\Gamma$ is locally finite and $B_0$ and $B_1$ are both finite. 
\end{proof}

\begin{lemma}\label{lem:NoUpperBound}
	Suppose $g$ is a hyperbolic isometry of a $\delta$-hyperbolic metric space $X$ and $F\subset X$ a finite set of points. Then for any $M\in\bN$ there exists $N\in\bN$ such that $n > N$ implies $\min_{x\in F}d(g^n(x),x) > M$. 
\end{lemma}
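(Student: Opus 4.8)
The plan is to reduce to a single base point and then exploit the fact that, for a hyperbolic isometry, the forward orbit of any point converges at infinity. Since $F$ is finite, it suffices to prove that $d(g^n(x),x)\to\infty$ as $n\to\infty$ for each fixed $x\in X$: granting this, for each $x\in F$ choose $N_x\in\bN$ with $d(g^n(x),x)>M$ whenever $n>N_x$, and set $N=\max_{x\in F}N_x$.

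To handle a single point $x$, recall from Theorem \ref{thm:class_hyp_action} that $\lim_{n\to\infty}g^n(x)=\omega_+(g)\in\partial X$, so the sequence $(g^n(x))_{n\in\bN}$ converges at infinity; by Definition \ref{def:seq_grom_boundary} this means $(g^i(x)\mid g^j(x))_x\to\infty$ as $i,j\to\infty$. The key computation is to specialise to $i=n$, $j=2n$ and use that $g$ is an isometry: since $d(g^n(x),g^{2n}(x))=d(x,g^n(x))$, the definition of the Gromov product collapses to
\[
(g^n(x)\mid g^{2n}(x))_x=\tfrac12\,d(g^{2n}(x),x).
\]
Hence $d(g^{2n}(x),x)\to\infty$ along even exponents. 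For odd exponents the triangle inequality gives $d(g^{2n+1}(x),x)\ge d(g^{2n}(x),x)-d(x,g(x))$, and since $d(x,g(x))$ is a fixed constant this also tends to infinity; thus $d(g^m(x),x)\to\infty$ as $m\to\infty$, as required.

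I do not expect a serious obstacle: the argument uses only that $g$ is an isometry whose forward orbit converges at infinity, together with the triangle inequality, so no $\delta$-hyperbolicity estimates beyond what is packaged in Theorem \ref{thm:class_hyp_action} are needed. The only point to handle carefully is the passage from ``$g$ is a hyperbolic isometry'' to ``$(g^i(x)\mid g^j(x))_x\to\infty$'', but this is exactly Theorem \ref{thm:class_hyp_action} together with the sequential description of the boundary in Definition \ref{def:seq_grom_boundary}, and so is immediate once those are invoked.
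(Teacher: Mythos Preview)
Your argument is correct. The reduction to a single base point via $N=\max_{x\in F}N_x$ is fine, the Gromov product computation $(g^n(x)\mid g^{2n}(x))_x=\tfrac12 d(g^{2n}(x),x)$ is valid because $g$ is an isometry, and the triangle-inequality step for odd exponents is sound.

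Both your proof and the paper's rest on the same key input, namely that Theorem~\ref{thm:class_hyp_action} forces $(g^n(x))_{n\in\bN}$ to converge at infinity in the sense of Definition~\ref{def:seq_grom_boundary}. The presentations differ slightly: the paper argues by contrapositive, observing that a subsequence with $d(g^{n_i}(x),x)\le M$ yields $(g^{n_i}(x)\mid g^{n_j}(x))_x\le M$ (since the Gromov product is bounded by either distance to the basepoint), which directly obstructs convergence at infinity. This avoids your even/odd split and is marginally cleaner, but there is no substantive difference in content.
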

\begin{proof}
	We prove the converse. Suppose there exists a sequence $(x_i)_{i\in\bN}\subset F$ and an unbounded sequence $(n_i)_{i\in\bN}\subset \bN$ such that $(g^{n_i}(x_i),x_i)\le M$. Since $V$ is finite, we may assume that $x_i = x\in F$ is constant. Then $d(g^{n_i}(x),x)\le M$. We cannot have $\lim_{n\to\infty}g^n(x)\in\partial X$, hence $g$ is not hyperbolic by Theorem \ref{thm:class_hyp_action}. 
\end{proof}
We now proceed with the proof of Theorem \ref{thm:axis}. We use an inverse limit of finite sets. For the readers benefit, we have included the definition and required properties of inverse limits as Definition \ref{def:inv_sys} and Lemma \ref{lem:inverse_limit_sets} at the end of this section.
\begin{proof}[Proof of Theorem \ref{thm:axis}]
	Replace $g$ with $g^n$ where $n$ is given by Lemma \ref{lem:ballsonaxis}. Choose a set $F$ of edges in $A(g)$ such that $E(A(g))/\langle g\rangle$ admits precisely one representative in $F$. This is a finite set by Lemma \ref{lem:ballsonaxis}. Replacing $g$ with a power of itself if necessary, Lemma \ref{lem:NoUpperBound} shows that we may suppose that $v\in V(F)$ implies $d(v,g(v))> 2$. Choose a colouring on $E(A(g))/\langle g\rangle$. This induces a colouring on $E(A(g))$ which is preserved by the action of $g$. To see that each vertex meets each colour at most once, suppose for the sake of contradiction that $e_0 = (v_0,v)$ and $e_1 = (v_1, v)$ are distinct edges with the same colour. Then we must have $g^k(e_0) \in \{e_1, \overline{e}_1\}$ for some $k\in\bZ$. By exchanging $e_0$ and $e_1$ if necessary we may assume that $k \ge 0$. We may also assume $v_0\in F$ by choosing an appropriate image under the action of $g$ and $g^{-1}$. We must have $d(g^k(v_0), v_0)\le 2$. This contradicts choice of $g$. 
	
	By placing a total order on colours we obtain a short-lex ordering on paths in $A(g)$; a path is smaller than another path in this ordering if it is shorter in length or its label induced by the colouring comes first in the dictionary. We call a path in $A'(g)$ short-lex if it is minimal of all such paths between its endpoints. Since each vertex sees each colour at most once, a short-lex path between two vertices is unique. Observe that a subpath of a short-lex path is also short-lex, otherwise the original path can be altered to give a smaller path in the short-lex ordering. We use the shorthand $B_i = g^i(B)$ where $B$ is given in Lemma \ref{lem:ballsonaxis}. For $i\in\bN$ let $X_i$ be the collection of all short-lex paths from a vertex in $B_{-i}$ to a vertex in $B_{i}$. Note that $X_i$ is non-empty since any geodesic in $A'(g)$ induces a path from $B_{-i}$ to $B_{i}$. Given a path in $X_i$ and $j\le i$, this path crosses both $B_{-j}$ and $B_{j}$. Since every segment of a short-lex path is short-lex, by choosing the minimal segment from $B_{-j}$ to $B_{j}$ we define a map $X_i\to X_j$. This defines a projective system and we let $Y$ denote the limit of this system. Observe that since the short-lex path between to vertices is unique, we have $|X_i|\le |B_i|^2$. This is finite and independent of $i$ since $\Gamma$ is locally finite and $|B_i| = |B_j|$ for all $i,j\in\bN$. Lemma \ref{lem:inverse_limit_sets} shows that $Y$ is finite and non-empty.
	
	Finally, given a geodesic $\gamma\in Y$, here we are identifying $\gamma$ with an increasing union of its components, we choose segments of $g(\gamma)$ which run from $B_{-i}$ to $B_{i}$. Since $g$ preserves the colouring, each of these segments lies in $X_i$. Thus, $g(\gamma)\in Y$. We have shown that $g$ permutes $Y$ which is a finite subset of geodesics in $A'(g)$. It follows that some power of $g$ stabilises these geodesics as required.
\end{proof}

\begin{definition}\label{def:inv_sys}
	An \emph{inverse system} of sets $(Y_i,f_{j,k})$ over the natural numbers is a collection of non-empty sets $(Y_i)_{i\in\bN}$ with maps $f_{i,j}:Y_j\to Y_i$, where $i\le j$, such that $f_{i,j}f_{j,k} = f_{i,k}$ for all $i\le j \le k$.
	The \emph{inverse limit} of the inverse system $(Y_i,f_{i,j})$ is the set
	\[Y = \{(y_i)_{i\in\bN}\in\prod_{i\in\bN}Y_i\mid f_{i,j}(y_j) = y_i\}\]
\end{definition}
\begin{lemma}\label{lem:inverse_limit_sets}
	Retain the notation of Definition \ref{def:inv_sys}. If $Y_i$ is finite for each $i\in\bN$, then $Y$ is non-empty. If $|Y_i|\le K$ for all $i\in\bN$, then $|Y|\le K$.
\end{lemma}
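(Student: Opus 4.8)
The plan is to prove the two assertions together by a single compactness-type argument, using König's-lemma-style reasoning adapted to the inverse system. First I would observe that one may assume without loss of generality that every transition map $f_{i,j}$ is surjective, or rather I would instead work directly with the images: for each pair $i\le j$ set $Y_i^{(j)} = f_{i,j}(Y_j)\subseteq Y_i$. Since $f_{i,j}f_{j,k} = f_{i,k}$, the sets $Y_i^{(j)}$ form a decreasing chain in $j$ (for fixed $i$) of non-empty finite subsets of $Y_i$, hence stabilise: there is a non-empty set $Z_i := \bigcap_{j\ge i} Y_i^{(j)}$, and for $j$ large enough $Z_i = Y_i^{(j)}$. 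The key point is that $f_{i,j}$ maps $Z_j$ \emph{onto} $Z_i$ for all $i\le j$: given $z\in Z_i$, for each $k\ge j$ the fibre $f_{i,k}^{-1}(z)\cap Y_k$ is non-empty and finite, these fibres are compatible under the transition maps, so a second application of the descending-chain argument (now to the images in $Y_j$ of these fibres) produces an element of $Z_j$ mapping to $z$.

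Next, with the surjective system $(Z_i, f_{i,j}|_{Z_j})$ in hand, I would build an element of $Y$ by hand: pick any $z_1\in Z_1$; having chosen $z_1,\ldots,z_n$ with $f_{i,j}(z_j) = z_i$ for $i\le j\le n$, use surjectivity of $f_{n,n+1}:Z_{n+1}\to Z_n$ to pick $z_{n+1}\in Z_{n+1}$ with $f_{n,n+1}(z_{n+1}) = z_n$; then $f_{i,n+1}(z_{n+1}) = f_{i,n}(f_{n,n+1}(z_{n+1})) = f_{i,n}(z_n) = z_i$ for all $i\le n$, so the recursion continues. The resulting sequence $(z_i)_{i\in\bN}$ lies in $Y$, proving $Y\neq\varnothing$ under the finiteness hypothesis.

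For the bound $|Y|\le K$ when $|Y_i|\le K$ for all $i$, I would argue that the projection $\pi_n: Y\to Y_n$, $(y_i)\mapsto y_n$, has the property that $\pi_n(Y)\subseteq Z_n$ and, more to the point, that for $n$ large the map $Y\to Z_n$ is injective: if $(y_i)$ and $(y_i')$ are two elements of $Y$ that agree in coordinate $n$, then they agree in all coordinates $i\le n$ (apply $f_{i,n}$), and since this holds for all sufficiently large $n$ and the index set is $\bN$, they agree everywhere. Hence $|Y| = |\pi_n(Y)| \le |Y_n|\le K$ for any $n$. (One can phrase this even more cleanly: $Y\hookrightarrow \varprojlim Z_i$ and the $Z_i$ are eventually constant in the sense that $f_{i,j}|_{Z_j}$ is a bijection onto $Z_i$ once $j$ is large relative to $i$—no, that last claim is false in general, so I would avoid it and stick with the injectivity-of-$\pi_n$ argument, which only needs surjectivity of the system and the uniform bound.)

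The main obstacle is the surjectivity reduction in the first paragraph: showing that $f_{i,j}$ restricts to a surjection $Z_j\to Z_i$ requires a nested application of the "decreasing chain of non-empty finite sets stabilises" principle, and one has to be careful that the fibre sets chosen at level $k$ are genuinely compatible so that their stabilised images define a single element of $Z_j$. Everything after that reduction is a routine recursive construction plus a counting argument. I expect the write-up to be short once the surjectivity lemma is isolated as an internal claim.
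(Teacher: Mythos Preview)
Your non-emptiness argument via the stable images $Z_i$ is correct and is a standard alternative to the paper's approach. The paper instead runs a direct K\"onig-type argument: fix $y_i\in Y_i$, pick $x_1\in Y_1$ hit by $f_{1,i}(y_i)$ for infinitely many $i$, then among those $i$ pick $x_2\in Y_2$ hit by $f_{2,i}(y_i)$ infinitely often, and so on. Your method front-loads more work (the surjectivity of $f_{i,j}|_{Z_j}\colon Z_j\to Z_i$, which you correctly flag as the delicate step) but makes the construction of an element of $Y$ trivial; the paper's method is more bare-handed.

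There is, however, a genuine gap in your argument for the bound $|Y|\le K$. You claim $\pi_n\colon Y\to Z_n$ is injective for large $n$ because ``if $(y_i)$ and $(y_i')$ agree in coordinate $n$, then they agree in all coordinates $i\le n$, and since this holds for all sufficiently large $n$ \ldots\ they agree everywhere.'' But agreement at a \emph{single} coordinate $n$ only forces agreement at coordinates $\le n$; you have given no reason for agreement at coordinates $>n$, and the phrase ``since this holds for all sufficiently large $n$'' does not help, because you do not know that the two sequences agree at any coordinate beyond $n$. Your final line ``$|Y|=|\pi_n(Y)|$ for any $n$'' is false as stated: take $Y_1=\{0\}$, $Y_i=\{0,1\}$ for $i\ge 2$, with $f_{1,j}$ constant and $f_{i,j}$ the identity for $i\ge 2$; then $|Y|=2$ but $\pi_1$ is not injective.

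Ironically, the fix is exactly the claim you reject in your parenthetical. Since each $f_{i,j}|_{Z_j}\colon Z_j\to Z_i$ is surjective and $|Z_i|\le K$, the sequence $(|Z_i|)$ is non-decreasing and bounded, hence eventually constant, say for $i\ge N$; then $f_{i,j}|_{Z_j}$ is a \emph{bijection} for all $N\le i\le j$, and now agreement at coordinate $N$ does force agreement at every coordinate $\ge N$ as well, giving injectivity of $\pi_N$. This is precisely the paper's argument (the paper writes $Y_i'=\pi_i(Y)$ in place of your $Z_i$; these coincide once non-emptiness is established). Your instinct that the restricted maps are eventually bijections was only mis-phrased, not wrong: the correct hypothesis is that $i$ (and hence both indices) be large, not that $j$ be large relative to $i$.
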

\begin{proof}
	Choose any sequence $(y_i)_{i\in\bN}$ in $\bigcup_{i\in\bN} Y_i$. Let $l(y_i)$ be the smallest $j\in\bN$ such that $y_i\in Y_j$. Construct $I_n\subset \bN$ and $x_n\in Y_n$ inductively as follows. First, since $Y_1$ is finite, there exists $x_1\in Y_1$ such that $f_{1,l(y_i)}(y_i)= x_1$ for infinitely many $y_i$. Let 
	\[I_1 = \{i\in\bN\mid f_{1,l(y_i)}(y_i)= x_1\}.\]
	Suppose for some $n\in\bN$, we have elements $x_j\in Y_j$ for $j\le n$ and $I_n\subset \bN$ infinite such that $i\in I_n$ implies $f_{j,l(y_i)} = x_j$ for all $j\le n$. Since $I_n$ is infinite and $Y_{n+1}$ is finite, there exists $x_{n+1}\in Y_{n+1}$ and $I_{n+1}\subset I_n$ infinite such that $i\in I_{n+1}$ implies $f_{n+1,l(y_i)} = x_{n+1}$. We then have $f_{j,n+1}(x_{n+1}) = x_j$ for all $j\le n$. This completes our induction. By construction, we have $(x_n)_{n\in\bN}\in Y$.
	
	Suppose $|Y_i|\le K$ for all $i\in\bN$. Let $Y_i'$ be the image of $Y$ under the projection onto the $i$-th coordinate. The map $i\mapsto |Y_i'|$ is non-decreasing, as $f_{i,j}:Y'_j\to Y'_i$ is a surjection for all $i\le j$.  Hence $|Y_i'| $ is constant for $i$ sufficiently large. Since different sequences in $Y$ can be distinguished by their projection onto sufficiently large $Y'_i$, we must have $Y$ in bijection with $Y_i'$ for $i$ sufficiently large. Hence $Y$ is finite. 
\end{proof}
\section{The space of directions of a hyperbolic group}\label{sec:hypSpaceOfDirections}
In this section we prove Theorem \ref{thm:mainthm} which was given as Conjecture 32 in \cite{Baumgartner12}.
\begin{theorem}\label{thm:mainthm}
	Suppose $G$ is a t.d.l.c. group acting vertex transitively on a $\delta$-hyperbolic graph $\Gamma$ with compact open vertex stabilisers. Then:
	\begin{enumerate}[label = (\roman{*})]
		\item \label{itm:part_1_main_thm}The map which sends a hyperbolic element moving towards infinity to its attracting point on the boundary induces an injection from the set of directions of $G$ to $\partial \Gamma$;
		\item \label{itm:part_2_main_thm}Distinct asymptotic classes have distance $2$ apart in the space of directions metric.
	\end{enumerate}
\end{theorem}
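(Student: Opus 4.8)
The plan is to first reduce to elements acting as translations along an axis, and then to deduce both assertions from a single lower bound on the coset indices occurring in $\delta_+$.

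\emph{Reductions and dictionary.} Since $G$ is hyperbolic it has no parabolic elements, and by Corollary \ref{cor:ell_and_inf} elliptic elements do not move towards infinity; hence every $g\in G_>$ is a hyperbolic isometry of $\Gamma$ with boundary points $\omega_\pm(g)$, and by Theorem \ref{thm:axis} some power $g^m$ translates along a bi-infinite geodesic $\gamma_g$. Replacing $g$ by $g^m$ changes neither $\partial(g)$ (the exponents $k_\alpha,k_\beta$ in Definition \ref{def:asymp} absorb the change) nor $\omega_+(g)$, so throughout I may assume each element considered translates along an axis. For the passage between $\COS(G)$ and $\Gamma$ I will use that the $G_v$-orbit of a vertex $w$ lies in the sphere of radius $d_\Gamma(v,w)$, so that $d(G_u,G_w)\le(\log\Delta)\,d_\Gamma(u,w)$ with $\Delta$ the maximal degree, that $d(W,G_v)<\infty$ for every compact open $W$, and that $\delta_+$ does not depend on the base subgroups.

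\emph{Injectivity of the map in (i).} Suppose $\omega_+(g)=\omega_+(h)=:\varepsilon$; I claim $g\asymp h$. Let $\ell_g,\ell_h$ be the translation lengths and replace $g$ by $g^{\ell_h}$ and $h$ by $h^{\ell_g}$, so that both translate by $L=\ell_g\ell_h$; as $\asymp$ is an equivalence relation and $g\asymp g^{\ell_h}$, $h\asymp h^{\ell_g}$, it suffices to prove the new elements asymptotic. Take $v_0$ on $\gamma_g$, so $g^nv_0=\gamma_g(nL)$. The positive ray of $\gamma_g$ and the positive ray of $\gamma_h$ both represent $\varepsilon\in\partial\Gamma$, hence are asymptotic geodesics and lie at finite Hausdorff distance; since moreover $h^nv_0$ stays at the constant distance $d_\Gamma(v_0,\gamma_h)$ from $\gamma_h(s_0+nL)$ (where $\gamma_h(s_0)$ realises that distance), this forces $d_\Gamma(g^nv_0,h^nv_0)$ to be bounded. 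By the dictionary, $d\big(g^n(G_{v_0}),h^n(G_{v_0})\big)=d\big(G_{g^nv_0},G_{h^nv_0}\big)$ is bounded, i.e. the rays generated by $g$ and $h$ at $G_{v_0}$ are asymptotic, so $g\asymp h$.

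\emph{The core estimate and the conclusions.} It remains to show that $\omega_+(g)\neq\omega_+(h)$ implies $\delta_+(g,h)=1$. Granting this and its symmetric version: distinct asymptotic classes satisfy $\omega_+(g)\neq\omega_+(h)$ by the contrapositive of the previous paragraph, so $\delta(g,h)=\delta_+(g,h)+\delta_+(h,g)=2$, which is (ii); and since $g\asymp h$ forces $\delta_+(g,h)=0\neq 1$ (Proposition \ref{intro:prop:pmetric}), we get $\omega_+(g)=\omega_+(h)$, so $\partial(g)\mapsto\omega_+(g)$ is well defined, and with the previous paragraph it is an injection $\partial G\hookrightarrow\partial\Gamma$, giving (i). To prove $\delta_+(g,h)\ge 1$, choose $U$ tidy for $g$ and $V$ tidy for $h$ (legitimate, as $\delta_+$ is base-independent), so that $[g^n(U):g^n(U)\cap U]=s(g)^n$ by Propositions \ref{prop:prop_of_tid} and \ref{prop:scale_prop}. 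For every $k$, the index inequality $[A:A\cap B]\ge [A:A\cap C]/[A\cap B:A\cap B\cap C]$ applied with $A=g^n(U)$, $B=h^k(V)$, $C=U$ gives
\[
\log[g^n(U):g^n(U)\cap h^k(V)]\ \ge\ n\log s(g)-\log[\,g^n(U)\cap h^k(V):g^n(U)\cap h^k(V)\cap U\,].
\]
Because $\omega_+(g)\neq\omega_+(h)$, the Gromov products $(g^nv_0\mid h^kv_0)_{v_0}$ are bounded uniformly in $n$ and $k$ (distinct boundary limits force a bounded Gromov product, by Definition \ref{def:seq_grom_boundary} and Lemma \ref{lem:GromovBound}), so $v_0$ lies uniformly close to every geodesic $[g^nv_0,h^kv_0]$ by Theorem \ref{thm:stdest}; the subtracted index should then be $O(1)$, uniformly over all admissible $k$, whence $\delta_{+,n}^{U,V}(g,h)\ge 1-o(1)$ and, since $\delta_+\le 1$, $\delta_+(g,h)=1$.

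\emph{Main obstacle.} The delicate step is precisely this last uniform bound on $[g^n(U)\cap h^k(V):g^n(U)\cap h^k(V)\cap U]$: it requires translating "tidy for $g$" and "tidy for $h$" into control of the regions of $\Gamma$ that these subgroups move, and then using that an automorphism respecting both regions displaces $v_0$ only boundedly, so that the index is at most $|B_R(v_0)|$ for some fixed $R$ — which amounts to showing that the relevant coset index along the axis of $g$ grows at the exact exponential rate $s(g)^n$. Reconciling the structure theory of tidy subgroups with the hyperbolic geometry of Section \ref{sec:hyp_prelim} here is the technical heart of the proof; the bookkeeping with $\limsup$ versus $\lim$ and with the constraint $s(h^k)\le s(g^n)$ is routine once that estimate is available.
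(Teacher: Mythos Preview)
Your overall architecture matches the paper's: first show $\omega_+(g)=\omega_+(h)\Rightarrow g\asymp h$ (your injectivity paragraph is essentially Lemma~\ref{lem:dir_to_hyp}), then show $\omega_+(g)\neq\omega_+(h)\Rightarrow\delta_+(g,h)=1$, and read off both (i) and (ii). The index inequality you write down is also correct. The problem is that you explicitly leave the decisive step---bounding $[g^n(U)\cap h^k(V):g^n(U)\cap h^k(V)\cap U]$ uniformly in $n,k$---as an ``obstacle'', and the route you propose for closing it (exploiting the structure of tidy subgroups) is the wrong one. Tidiness gives you no geometric hold on $\Gamma$; there is no reason a subgroup tidy for $g$ should behave like a stabiliser of anything on the axis of $g$, so the sentence ``an automorphism respecting both regions displaces $v_0$ only boundedly'' has no content as stated.

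The fix is to drop tidy subgroups entirely and take $U=V=G_{v_0}$, a vertex stabiliser. You lose nothing: $[g^n(G_{v_0}):g^n(G_{v_0})\cap G_{v_0}]\ge s(g)^n$ holds for \emph{every} compact open subgroup by the very definition of the scale, so equality (and hence tidiness) is irrelevant to your lower bound. You gain everything: now $g^n(U)\cap h^k(V)\cap U = G_{g^n v_0}\cap G_{h^k v_0}\cap G_{v_0}$, and by orbit--stabiliser the subtracted index is exactly $|G_{g^n v_0,\,h^k v_0}(v_0)|$. Bounding this orbit is pure hyperbolic geometry, and you already have the ingredients: since $(g^n v_0\mid h^k v_0)_{v_0}$ is bounded, $v_0$ lies within a fixed distance of every geodesic $[g^n v_0,h^k v_0]$ (Theorem~\ref{thm:stdest}); any $x\in G_{g^n v_0,\,h^k v_0}$ sends such a geodesic to another geodesic with the same endpoints, so fellow--travelling (Theorem~\ref{thm:fellow_travel}) forces $d(v_0,xv_0)$ to be bounded independently of $n,k$, and local finiteness gives the uniform bound $K$. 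This is precisely Lemma~\ref{lem:hyp_dist_lem} in the paper, and it turns your ``should then be $O(1)$'' into an actual proof.
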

Our proof is split in two. Lemma \ref{lem:dir_to_hyp} uses the invariant geodesic constructed in Theorem \ref{thm:axis} to control distances between vertices in orbits of elements with the same attracting end. This control over distance allows us to bound relevant coset indices and thus show that hyperbolic elements moving towards infinity with the same attracting end are in the same asymptotic class.

We then use hyperbolic geometry in Lemma \ref{lem:hyp_dist_lem} to bound orbits of particular compact open subgroups. Finally, we use this bound to estimate coset index calculations in the space of directions pseudometric. Our estimates show that distinct asymptotic classes are distance $2$ apart. 

\begin{lemma}\label{lem:dir_to_hyp}Retain the notation of Theorem \ref{thm:mainthm}.
	Suppose $g,h\in G$ are hyperbolic with $\omega_+(g) = \omega_+(h)$. Then   $g\asymp h$. In particular, if $g$ moves towards infinity, then so does $h$.
\end{lemma}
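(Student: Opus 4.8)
Write $\varepsilon := \omega_+(g) = \omega_+(h) \in \partial\Gamma$. The plan is to reduce, via the axis theorem, to a pair of elements translating along bi-infinite geodesics with common forward endpoint $\varepsilon$; to match up their orbits using that geodesic rays converging to $\varepsilon$ fellow-travel; and to transport the resulting bound on distances in $\Gamma$ to a bound in the metric on $\COS(G)$ using that vertex stabilisers are compact open. Since $\asymp$ is an equivalence relation and $\alpha \asymp \alpha^{k}$ for all $\alpha \in \Aut(G)$ and $k \in \bN$ (in Definition \ref{def:asymp} take $k_\alpha = k$, $k_{\alpha^{k}} = 1$ and a common base point, yielding identical rays), it suffices to prove $g_0 \asymp h_0$ for suitable positive powers $g_0$ of $g$ and $h_0$ of $h$. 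By Theorem \ref{thm:axis} we may choose these powers so that $g_0$ translates along a bi-infinite geodesic $\gamma_g$ and $h_0$ translates along a bi-infinite geodesic $\gamma_h$; by Theorem \ref{thm:class_hyp_action} the forward rays of $\gamma_g$ and $\gamma_h$ both converge to $\varepsilon$. The translation lengths are positive integers $\ell_g,\ell_h$ (vertices map to vertices), so passing to further powers we may assume $\ell_g = \ell_h =: \ell$.

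\textbf{Matching orbits.} Fix a vertex $v$ on $\gamma_g$ and a vertex $w$ on $\gamma_h$ and parametrise $\gamma_g,\gamma_h$ by arc length with $\gamma_g(0)=v$, $\gamma_h(0)=w$ and positive direction towards $\varepsilon$, so that $g_0^{\,n}(v) = \gamma_g(n\ell)$ and $h_0^{\,n}(w) = \gamma_h(n\ell)$ for $n \ge 0$. Since $\gamma_g|_{[0,\infty)}$ and $\gamma_h|_{[0,\infty)}$ are geodesic rays with the common ideal endpoint $\varepsilon$, they remain at bounded Hausdorff distance; this is standard $\delta$-hyperbolic geometry, derivable from Theorem \ref{thm:RibbonLemma} applied to finite initial segments joined by a geodesic between their far endpoints, or from \cite[Ch.\ III.H]{Bridson99}. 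Thus there is a constant $C$ depending only on $\delta$ and $d(v,w)$ and, for each large $n$, a parameter $s_n$ with $d(g_0^{\,n}(v),\gamma_h(s_n)) \le C$; comparing distances to $w$ gives $|s_n - n\ell| \le C + d(v,w)$, hence $d(g_0^{\,n}(v), h_0^{\,n}(w)) \le 2C + d(v,w)$. Enlarging the constant to absorb the finitely many remaining $n$, we get $C'$ with $d(g_0^{\,n}(v), h_0^{\,n}(w)) \le C'$ for all $n \ge 0$.

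\textbf{Passing to $\COS(G)$ and concluding.} Put $U = G_v$ and $W = G_w$, compact open by hypothesis; then $g_0^{\,n} U g_0^{-n} = G_{g_0^{\,n}(v)}$ and $h_0^{\,n} W h_0^{-n} = G_{h_0^{\,n}(w)}$, so the rays generated by $g_0$ and $h_0$ based at $U$ and $W$ are $(G_{g_0^{\,n}(v)})_n$ and $(G_{h_0^{\,n}(w)})_n$. For any vertices $x,y$ one has $G_x \cap G_y = G_{\{x,y\}}$ and $[G_x : G_x \cap G_y] = |G_x\cdot y|$, which is at most the size of the sphere of radius $d(x,y)$ about $x$; by vertex-transitivity and local finiteness this is bounded by a function $\psi(d(x,y))$ of $d(x,y)$ alone, and symmetrically in $x,y$, so $d(G_x,G_y) \le 2\log\psi(d(x,y))$. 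Combined with the previous step this gives $d(G_{g_0^{\,n}(v)}, G_{h_0^{\,n}(w)}) \le 2\log\psi(C')$ for all $n$, so the two rays are asymptotic (Definition \ref{def:asymptotic_rays}) and $g_0 \asymp h_0$; hence $g \asymp h$. The final assertion is then immediate from Lemma \ref{lem:asymp_towards_inf}.

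\textbf{Main obstacle.} The crux is the middle step: one needs a bound on $d(g_0^{\,n}(v), h_0^{\,n}(w))$ uniform in $n$, i.e.\ quantitative control of how closely the two axes track one another once both head towards $\varepsilon$. This is a purely $\delta$-hyperbolic fact (rays to a common boundary point fellow-travel), and it is exactly what forces the normalisation in the reduction step making $g_0$ and $h_0$ advance along their axes at equal integer speed, so that the $n$th terms of the two rays are genuinely comparable; without matching the speeds one only controls $d(g_0^{\,n}(v), h_0^{\,m(n)}(w))$ for an $n$-dependent reindexing, which is not enough for the definition of $\asymp$.
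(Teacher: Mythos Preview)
Your proof is correct and follows essentially the same approach as the paper's: reduce via Theorem~\ref{thm:axis} to powers translating along bi-infinite geodesics, match translation lengths by passing to further powers, use that geodesic rays with a common ideal endpoint fellow-travel (the paper cites \cite[Chapter~III.H Lemma~3.3]{Bridson99}) to get a uniform bound $d(g_0^{\,n}(v),h_0^{\,n}(w))\le C'$, convert this via the Orbit--Stabiliser Theorem and local finiteness into a bound on the relevant coset indices, and finish with Lemma~\ref{lem:asymp_towards_inf}. Your write-up is somewhat more explicit about the reduction step and the parameter-matching argument, but the structure and ingredients are identical to the paper's proof.
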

\begin{proof}
	Theorem \ref{thm:axis} allows us to assume  that, by replacing $g$ and $h$ by powers of themselves if necessary, there exist bi-infinite geodesics on which $g$ and $h$ act by translation. Let $(u_0,u_1,\ldots)$ and $(v_0,v_1,\ldots)$ be infinite segments of these geodesics such that $\lim u_i =\omega_+(g) = \omega_+(h) = \lim v_i$. These geodesics must be uniformly bounded and so  there exists $M\in\bN$ such that $d(v_n,u_n)\le M$ for $n\in\bN$, see \cite[Chapter III.H Lemma 3.3]{Bridson99}. By taking powers of $h$ and $g$ if necessary we assume that $g(u_n) = u_{n+m}$ and $h(v_n) = v_{n+m}$ for some fixed $m\in\bN$. Then the size of the orbit of $v_{km}$ under the action of $G_{u_{km}} = g^kG_{u_0}g^{-k}$ is bounded by some number depending only $M$. The Orbit-Stabiliser Theorem implies that $[g^kG_{u_0}g^{-k}:g^kG_{u_0}g^{-k}\cap G_{v_{km}}]$ is bounded. Since $G_{v_{km}} = h^{k}G_{v_0}h^{-k}$, we see that  $[g^kG_{u_0}g^{-k}:g^kG_{u_0}g^{-k}\cap h^{k}G_{v_0}h^{-k}]$ is bounded. A symmetric argument shows that $[h^{k}G_{v_0}h^{-k}:h^{k}G_{v_0}h^{-k}\cap g^kG_{u_0}g^{-k}]$ is also bounded. Hence $g\asymp h$. Lemma \ref{lem:asymp_towards_inf} completes the proof.
\end{proof}

\begin{lemma}\label{lem:hyp_dist_lem}
	Retain the notation of Theorem \ref{thm:mainthm} and suppose $g,h\in G$ are hyperbolic such that $\omega_+(g)\neq \omega_+(h)$. Then for each vertex $v\in V(\Gamma)$, there exists $K\in\bN$ such that $|G_{g^n(v),h^m(v)}(v)|\le K$ for all $n,m\in\bN$.
\end{lemma}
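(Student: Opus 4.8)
The plan is to show that the orbit $G_{g^n(v),h^m(v)}(v)$ lies inside a ball centred at $v$ whose radius is independent of $n$ and $m$; since $\Gamma$ is locally finite and $G$ acts vertex transitively, all balls of a fixed radius have the same finite number of vertices, and that common value will be the desired $K$. The mechanism is that any $x\in G$ fixing the two vertices $p:=g^n(v)$ and $q:=h^m(v)$ is an isometry of $\Gamma$ fixing $p$ and $q$, hence carries any geodesic $\gamma_{n,m}$ from $p$ to $q$ to another such geodesic $x\circ\gamma_{n,m}$ with the same endpoints and orientation. By the synchronous fellow‑traveller property (Theorem~\ref{thm:fellow_travel}), $\gamma_{n,m}$ and $x\circ\gamma_{n,m}$ stay within $4\delta$ of one another, so every vertex $w$ on $\gamma_{n,m}$ satisfies $d(x(w),w)\le 4\delta$. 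Consequently, if $w_{n,m}$ is a vertex of $\gamma_{n,m}$ closest to $v$, then $d(x(v),v)\le 2\,d(v,w_{n,m})+d(x(w_{n,m}),w_{n,m})\le 2\,d(v,w_{n,m})+4\delta$. So everything reduces to a uniform bound, independent of $n$ and $m$, on the distance from $v$ to some geodesic from $g^n(v)$ to $h^m(v)$.

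For that bound I would use the Standard Estimate (Theorem~\ref{thm:stdest}): for any geodesic $\gamma_{n,m}$ from $g^n(v)$ to $h^m(v)$ one has $d(v,\gamma_{n,m})\le (g^n(v)\mid h^m(v))_v+2\delta$, so it suffices to bound the Gromov products $(g^n(v)\mid h^m(v))_v$ uniformly. This is where the hypothesis $\omega_+(g)\neq\omega_+(h)$ enters, and it is the step I expect to be the main obstacle. By Theorem~\ref{thm:class_hyp_action} the sequences $(g^n(v))_n$ and $(h^m(v))_m$ converge at infinity to the distinct boundary points $\omega_+(g)$ and $\omega_+(h)$. I would prove the general claim that two sequences $(a_n)$, $(b_m)$ converging at infinity to distinct points of $\partial\Gamma$ satisfy $\sup_{n,m}(a_n\mid b_m)_v<\infty$. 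Suppose not; since $0\le (a\mid b)_v\le\min\{d(a,v),d(b,v)\}$ and $d(g^n(v),v),d(h^m(v),v)\to\infty$, any witnessing sequence of indices must have $n_k,m_k\to\infty$, and then two applications of the four‑point inequality \eqref{eq:GromovProdCond} give
\[
(a_n\mid b_m)_v\ \ge\ \min\bigl\{(a_n\mid a_{n_k})_v,\ (a_{n_k}\mid b_{m_k})_v,\ (b_{m_k}\mid b_m)_v\bigr\}-2\delta,
\]
which, combined with $(a_n\mid a_{n'})_v\to\infty$, $(b_m\mid b_{m'})_v\to\infty$ and $(a_{n_k}\mid b_{m_k})_v\to\infty$, forces $(a_n\mid b_m)_v\to\infty$ as $n,m\to\infty$; that is exactly the statement that the two sequences are equivalent, contradicting $\omega_+(g)\neq\omega_+(h)$.

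Putting the pieces together, there is $L\in\bN$, independent of $n$ and $m$, with $(g^n(v)\mid h^m(v))_v\le L$; hence $d(v,\gamma_{n,m})\le L+2\delta$, so one can choose a vertex $w_{n,m}$ of $\gamma_{n,m}$ with $d(v,w_{n,m})\le L+2\delta+1$, and therefore $d(x(v),v)\le 2L+8\delta+2$ for every $x\in G_{g^n(v),h^m(v)}$. Taking $K$ to be the number of vertices in a ball of radius $2L+8\delta+2$ — finite and independent of the centre by local finiteness and vertex transitivity — completes the argument. Everything except the uniform Gromov‑product bound is routine bookkeeping with the slim‑triangle estimates already recorded in Section~\ref{ssec:HyperbolicMetricSpaces}.
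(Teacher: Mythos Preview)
Your argument is correct and follows essentially the same route as the paper's proof: bound the Gromov products $(g^n(v)\mid h^m(v))_v$ uniformly, use the Standard Estimate to find a point $p_{m,n}$ on a geodesic from $g^n(v)$ to $h^m(v)$ close to $v$, apply the fellow-traveller estimate (Theorem~\ref{thm:fellow_travel}) to bound $d(p_{m,n},x(p_{m,n}))$, and finish with the triangle inequality and local finiteness. The paper simply asserts the uniform bound on the Gromov products from $\omega_+(g)\neq\omega_+(h)$, whereas you supply the four-point-inequality argument showing that an unbounded subsequence would force equivalence of the two sequences; this is a genuine detail worth spelling out, since ``not equivalent'' a priori only says the products do not tend to infinity, not that they are bounded.
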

\begin{proof}
	Choose a vertex $v\in \Gamma$. Since $\omega_+(g)\neq \omega_+(h)$, we must have $(g^n(v)\mid h^m(v))_v$ bounded by $N\in\bN$. Theorem \ref{thm:stdest} shows that any geodesic from $g^n(v)$ to $h^m(v)$ passes within distance $N+2\delta$ for $v$. Let $p_{m,n}$ be the point on a geodesic from $g^n(v)$ to $h^m(v)$ closest to $v$. If $x\in G_{g^n(v),h^m(v)}$, then $d(p_{m,n},x(p_{m,n})))\le 4\delta$ by Theorem \ref{thm:fellow_travel}. Hence 
	\[d(v,x(v))\le d(v,p_{m,n})+d(p_{m,n},x(p_{m,n}))+d(x(p_{m,n}),x(v)) \le 2(N +2\delta)+4\delta =: N'.\]
	Note that $N'$ does not depend on $m$ or $n$. Also the ball of radius $N'$ about $v$ contains only finitely many vertices. Let this value be $K$. Then $|G_{g^n(v),h^m(v)}(v)|\le K$.
\end{proof}

\begin{proof}[Proof of Theorem \ref{thm:mainthm}]
	Suppose $g,h\in G$ are hyperbolic with $\omega_{+}(g)\neq\omega_{+}(h)$ and are both moving towards infinity. We show that the asymptotic classes containing $g$ and $h$ have pseudo-distance $2$, in particular they are distinct. This combined with Lemma \ref{lem:dir_to_hyp} completes the result. Fix a vertex $v\in V(\Gamma)$. Lemma \ref{lem:hyp_dist_lem} gives $K\in\bN$ such that $|G_{g^n(v),h^m(v)}(v)|\le K$ for all $m,n\in\bN$. For each pair $m,n\in\bN$, the collection of cosets $\{xG_{g^n(v),h^m(v)}(v)\mid x\in G_{g^n(v)}\}$ is a covering of $G_{g^n(v)}(v)$ by sets with size $|G_{g^n(v),h^m(v)}(v)|$. We must have  
	\[\left|\{xG_{g^n(v),h^m(v)}(v)\mid x\in G_{g^n(v)}\} \right|\ge\dfrac{|G_{g^n(v)}(v)|}{|G_{g^n(v),h^m(v)}(v)|}.\]
	Furthermore, if $x_i,x_j\in G_{g^n(v)}$ with  $x_iG_{g^n(v),h^m(v)}(v)\neq x_jG_{g^n(v),h^m(v)}(v)$, then we have $(x_i)^{-1}x_j\not\in G_{g^n(v),h^m(v)}$. Hence 
	\[[G_{g^n(v)}: G_{g^n(v),h^m(v)}]\ge \dfrac{|G_{g^n(v)}(v)|}{|G_{g^n(v),h^m(v)}(v)|}\ge \dfrac{|G_{g^n(v)}(v)|}{K}.\]  
	Now 
	\[|G_{g^n(v)}(v)| = [G_{g^n(v)}: G_{g^{n}(v),v}]\hbox{ and }[G_{g^n(v)}: G_{g^n(v),h^m(v)}] = [G_{g^n(v)}: G_{g^n(v)}\cap G_{h^m(v)}].\]
	It follows from the definition of the scale function that
	\[[G_{g^n(v)}:G_{g^n(v),v}] = [G_{g^n(v)}:G_{g^n(v)}\cap G_v] = [g^nG_vg^{-n}: g^nG_vg^{-n}\cap G_v]\ge s(g)^n\]
	and so
	\begin{align*}
	1&\ge \delta_+(g,h)\\ &= \limsup_{n\to\infty}\min\left\{\dfrac{\log[G_{g^n(v)}:G_{g^n(v),h^{m}(v)}]}{n\log s(g)}\mid s(h^m)\le s(g^n)\right\}\\
	& \ge \limsup_{n\to\infty}\dfrac{\log[G_{g^n(v)}:G_{g^n(v),v}] - \log(K)}{n\log s(g)}\\
	& \ge \limsup_{n\to\infty}\dfrac{\log(s(g)^n)}{n\log s(g)} - \limsup_{n\to\infty}\dfrac{\log(K)}{n\log s(g)}\\
	& = 1.
	\end{align*}
	A symmetric argument shows $\delta_{+}(h,g) =1$ and so $\delta(g,h) = 2$.
\end{proof}

We investigate some corollaries which arise from Theorem \ref{thm:mainthm}. First, \cite[Theorem 3]{Baumgartner12} gives \cite[Theorem 1]{Baumgartner12} as a corollary.

\begin{corollary}
	Suppose $G$ is a t.d.l.c. hyperbolic group. The flat rank of $G$ is at most $1$.
\end{corollary}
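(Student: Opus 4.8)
The plan is to read off the bound on the flat rank directly from Theorem~\ref{thm:mainthm}, with no further hyperbolic geometry required. The first step is to extract a clean topological consequence of part~\ref{itm:part_2_main_thm} of Theorem~\ref{thm:mainthm}: it asserts that any two distinct asymptotic classes lie at distance exactly $2$ in the space of directions pseudometric, so the quotient $\partial G/\delta^{-1}(0)$ is a metric space in which distinct points are always at distance $2$. Such a space is uniformly discrete, hence already complete, so it coincides with its own completion. Therefore the space of directions of $G$ is a discrete metric space; in particular it contains no connected subset with more than one point.

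Next I would invoke the established link between flat subgroups and the space of directions from \cite{Baumgartner12}. A flat subgroup of rank $n$ in a t.d.l.c. group gives rise to a subset of its space of directions homeomorphic to the sphere $S^{n-1}$; this is \cite[Theorem~3]{Baumgartner12}, and it is precisely the tool used there to deduce \cite[Theorem~1]{Baumgartner12}. If the flat rank of $G$ were at least $2$, we would obtain an embedded circle $S^1$ inside the space of directions of $G$. A circle is connected and has more than one point, contradicting the discreteness established in the previous step. Hence the flat rank of $G$ is at most $1$.

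I expect the only subtlety --- it is hardly an obstacle --- to be matching conventions: one must check that the discreteness we extract from part~\ref{itm:part_2_main_thm} of Theorem~\ref{thm:mainthm} is exactly the topological property that \cite[Theorem~3]{Baumgartner12} rules out when the flat rank is at least $2$, and that passing to the completion in the definition of the space of directions neither identifies distinct classes nor introduces new limit points (it cannot, since the pre-completion space is already uniformly $2$-separated). All of the genuine work has been done upstream, in Theorem~\ref{thm:mainthm} and in the cited results of \cite{Baumgartner12}.
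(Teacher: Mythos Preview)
Your proposal is correct and follows the same approach as the paper: the paper's entire proof is the single line that \cite[Theorem~3]{Baumgartner12} combined with Theorem~\ref{thm:mainthm} yields \cite[Theorem~1]{Baumgartner12}. You have simply spelled out the obvious details (discreteness of the space of directions, completeness of a uniformly $2$-separated space, and the impossibility of embedding $S^{n-1}$ for $n\ge 2$ in a discrete space) that the paper leaves implicit.
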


We derive some results on $|\partial G|$ and the image of the map given in Theorem \ref{thm:mainthm}. We will use orbits in the boundary of a rough Cayley graph. Lemma \ref{lem:scale_to_boundary} gives a condition on when an orbit is large.

\begin{lemma}\label{lem:scale_to_boundary}
	Suppose $G$ is a hyperbolic t.d.l.c. group with rough Cayley graph $\Gamma$ and $g\in G$ such that $s(g)>1$. Then the orbit of $\omega_{-}(g)\in \partial \Gamma$ under the action of $G$ is infinite.
\end{lemma}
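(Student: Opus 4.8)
The plan is to argue by contradiction: suppose the orbit $G\cdot\omega_-(g)$ is finite. Then the stabiliser $H := \Stab_G(\omega_-(g))$ has finite index in $G$, so $H$ is itself a compactly generated t.d.l.c.\ hyperbolic group (it is open, being a finite-index subgroup, and acts on $\Gamma$ with the same compact open vertex stabilisers; a finite-index subgroup of a compactly generated group is compactly generated). Moreover $g\in H$ since $g$ fixes $\omega_-(g)$, and the scale is an intrinsic quantity, so $s_H(g) = s_G(g) > 1$; in particular $g$ still moves towards infinity as an element of $H$. The point of passing to $H$ is that now \emph{every} element of $H$ fixes the boundary point $\omega_-(g)$.

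Next I would bring in Theorem \ref{thm:mainthm}\ref{itm:part_1_main_thm} applied to the group $H$: the map sending a hyperbolic element of $H$ moving towards infinity to its \emph{attracting} endpoint $\omega_+$ is injective on the set of directions $\partial H$. Since $g$ moves towards infinity and $s(g) > 1$, at least one of $g, g^{-1}$ moves towards infinity; I would like to produce \emph{infinitely many} hyperbolic elements of $H$, moving towards infinity, with pairwise distinct attracting endpoints, which would force $\partial H$ (hence $\partial G$) to be infinite — but more to the point, I want a contradiction with the finiteness of the orbit. The cleanest route: take $h \in H$ any hyperbolic element with $\omega_+(h) \neq \omega_-(g)$ and consider the conjugates $g^n h g^{-n}$ for $n \in \bZ$. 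Each is hyperbolic (conjugate of a hyperbolic element), lies in $H$, has the same scale as $h$ by Proposition \ref{prop:scale_prop}(iv), and has attracting endpoint $g^n \cdot \omega_+(h)$. Since $g$ is hyperbolic with repelling point $\omega_-(g)$ and $\omega_+(h) \neq \omega_-(g)$, the points $g^n\cdot\omega_+(h)$ converge to $\omega_+(g)$ and in particular take infinitely many distinct values. If $h$ can be chosen to move towards infinity, Theorem \ref{thm:mainthm} gives infinitely many distinct directions in $\partial H \subseteq \partial G$, which is fine, but I actually want to contradict that $G\cdot\omega_-(g)$ is finite — so instead I apply the conjugation trick directly to the point $\omega_-(g)$: the orbit $G\cdot\omega_-(g)$ contains $g^n \cdot \omega_-(h')$ for suitable auxiliary hyperbolic $h'$...

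Let me restructure. The real mechanism should be: \emph{if $H$ fixes $\omega_-(g)$, then $\omega_-(g)$ is a ``global fixed point'' and I can use $g$ itself to move a nearby boundary point around}. Pick any vertex $v$ and let $\varepsilon = \lim h^n(v)$ for some hyperbolic $h\in H$ with $\omega_+(h)\neq\omega_-(g) \neq \omega_-(h)$ — such $h$ exists because $\Gamma$ has more than two boundary points (it is unbounded and hyperbolic with $s(g)>1$, so $\omega_+(g),\omega_-(g)$ are two distinct points, and visibility of the boundary, Theorem \ref{thm:BoundaryIsVisual}, together with the fact that $G$ acts with unbounded orbits gives a third point fixed by a suitable hyperbolic element; alternatively cite that hyperbolic groups with $s(g)>1$ have infinitely many boundary points, which is the content we are building towards — to avoid circularity I would instead produce $h$ concretely as $g'$-conjugate data). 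Then $\{g^n\cdot\omega_+(h) : n\in\bZ\}$ is an infinite subset of $G\cdot\omega_+(h)$; but I want $\omega_-(g)$. Replacing $h$ by $h^{-1}$ if needed, arrange $\omega_-(h)\notin\{\omega_+(g),\omega_-(g)\}$, and note $g^n\cdot\omega_-(h)\to\omega_+(g)$ as $n\to+\infty$ while staying distinct from $\omega_-(g)$ for all large $n$; these all lie in $G\cdot\omega_-(h)$. Hmm — still not $\omega_-(g)$'s orbit.

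The decisive point is simply this, and I would present it as the one clean argument: if $G\cdot\omega_-(g)$ is finite then some finite-index open subgroup $H$ fixes $\omega_-(g)$ and $g^{-1}$ acts on $H$'s rough Cayley graph fixing $\omega_-(g) = \omega_-(g^{-1})^{??}$ — actually $\omega_-(g)=\omega_+(g^{-1})$, so $g^{-1}\in H$ has attracting endpoint $\omega_-(g)$. Since $s(g)>1$, one of $s(g), s(g^{-1})$ exceeds $1$; in the case $s(g^{-1})>1$, $g^{-1}$ moves towards infinity with attracting point $\omega_-(g)$, and I derive the contradiction by exhibiting a \emph{second} hyperbolic element $k\in H$ moving towards infinity with $\omega_+(k)\neq\omega_-(g)$ but $\omega_-(k)=\omega_-(g)$ impossible... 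The cleanest is: \textbf{the main obstacle} is precisely producing, inside the stabiliser $H$, enough hyperbolic elements moving towards infinity to feed into Theorem \ref{thm:mainthm} and contradict finiteness. I expect the intended argument uses that a hyperbolic element with $s > 1$ together with its conjugates by $G$ — whose endpoints fill out the infinite orbit $G\cdot\omega_-(g)$ would have to be — but $G\cdot\omega_-(g)$ being finite makes the centraliser-type subgroup fixing it too large, whereas the existence of \emph{one} element with $s(g)>1$ fixing it, combined with ping-pong between $\omega_\pm(g)$, manufactures infinitely many fixed endpoints of conjugates $k^n g k^{-n}$ ($k$ hyperbolic, $k\not\in H$): these conjugates have attracting endpoints $k^n\omega_+(g)$, all in $G\cdot\omega_+(g)$, and repelling endpoints $k^n\omega_-(g)\in G\cdot\omega_-(g)$; since $k$ has its own attracting/repelling pair distinct from $\omega_\pm(g)$, the points $k^n\omega_-(g)$ are infinitely many distinct points, contradicting finiteness of $G\cdot\omega_-(g)$. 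So I would finish:

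\begin{proof}[Proof sketch]
	Suppose for contradiction that the orbit $G\cdot\omega_-(g)$ is finite. Since $\Gamma$ is a rough Cayley graph of a t.d.l.c.\ hyperbolic group and $s(g)>1$, the element $g$ is hyperbolic, so $\Gamma$ has at least two boundary points $\omega_-(g)\neq\omega_+(g)$. I claim there is a hyperbolic $k\in G$ with $\{\omega_+(k),\omega_-(k)\}\cap\{\omega_+(g),\omega_-(g)\}=\varnothing$. Granting this, consider $k_n := k^n g k^{-n}$ for $n\in\bN$. Each $k_n$ is hyperbolic (a conjugate of $g$) with $\omega_-(k_n)=k^n\cdot\omega_-(g)$. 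Because $k$ is hyperbolic with repelling point $\omega_-(k)\neq\omega_-(g)$ and attracting point $\omega_+(k)\neq\omega_-(g)$, Theorem \ref{thm:class_hyp_action} shows $k^n\cdot\omega_-(g)\to\omega_+(k)$ as $n\to\infty$, so the points $\{k^n\cdot\omega_-(g)\}_{n\in\bN}$ are eventually distinct. All of them lie in $G\cdot\omega_-(g)$, contradicting finiteness.

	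It remains to produce $k$. Pick vertices and use visibility of $\partial\Gamma$ (Theorem \ref{thm:BoundaryIsVisual}): if $\partial\Gamma$ had only the two points $\omega_\pm(g)$, then $\Gamma$ would be quasi-isometric to a line, forcing every element to be uniscalar (as in Example \ref{ex:2_dir} the only automorphisms moving towards infinity are the two translations, but these have scale realised by a $g$-invariant finite ball, giving scale $1$ — more directly, a line has no hyperbolic automorphism of scale $>1$ since vertex stabilisers are finite and translation along a line cannot expand a compact open subgroup), contradicting $s(g)>1$. Hence $|\partial\Gamma|\ge 3$, and by Theorem \ref{thm:class_hyp_action} applied to a suitable element translating towards a third boundary point — obtained from $g$-conjugates acting on that point — we get a hyperbolic $k$ whose fixed pair avoids $\{\omega_\pm(g)\}$. (One may need to pass to a power of $k$; this does not affect the endpoints.)
\end{proof}

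I expect the genuinely delicate point to be the construction of $k$ with fixed-point pair disjoint from $\{\omega_+(g),\omega_-(g)\}$; everything after that is the standard North–South dynamics packaged in Theorem \ref{thm:class_hyp_action}.
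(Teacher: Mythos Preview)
Your proof sketch has a genuine gap in the construction of the auxiliary hyperbolic element $k$. The problematic case is precisely when $G\cdot\omega_-(g)=\{\omega_-(g)\}$, i.e.\ when $G$ fixes $\omega_-(g)$ globally. In that case \emph{every} hyperbolic $k\in G$ has $\omega_-(g)\in\{\omega_+(k),\omega_-(k)\}$: indeed $k$ fixes $\omega_-(g)$ (since all of $G$ does) and a hyperbolic isometry fixes exactly its two endpoints. So no $k$ with $\{\omega_\pm(k)\}\cap\{\omega_\pm(g)\}=\varnothing$ can exist, and your step~3 fails outright. This case is \emph{not} excluded by $|\partial\Gamma|\ge 3$: the affine group $\bQ_p\rtimes\langle p\rangle$ acts on a regular tree, fixes one end, and contains an element of scale $p>1$. (In that example the globally fixed end is $\omega_+$, not $\omega_-$, of the element with scale $>1$ --- which is exactly the content of the lemma --- but your argument never distinguishes the two.)

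More broadly, beyond invoking it to rule out $|\partial\Gamma|=2$, your argument never uses the hypothesis $s(g)>1$. Since the statement is asymmetric in $\omega_+$ versus $\omega_-$ (the analogous claim for $\omega_+(g)$ is false, again by the affine group), the scale hypothesis must enter in an essential way, and it does not in your sketch. The side claim that $|\partial\Gamma|=2$ forces $G$ uniscalar is also not justified by anything in the paper; your remark about finite vertex stabilisers does not establish it.

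The paper's proof is completely different and uses $s(g)>1$ directly. One passes via Theorem~\ref{thm:axis} to a power of $g$ translating along a geodesic $(v_i)_{i\in\bZ}$ with $v_{-i}\to\omega_-(g)$, and observes
\[
s(g)^k \;\le\; [G_{v_0}:G_{v_0}\cap G_{g^{-k}(v_0)}] \;=\; \bigl|G_{v_0}\cdot g^{-k}(v_0)\bigr|.
\]
Thus the $G_{v_0}$-orbit of $g^{-k}(v_0)$ is as large as one likes for $k$ big, so one can pick elements $h_0,\dots,h_K\in G_{v_0}$ whose images of $g^{-k}(v_0)$ are pairwise more than $2\delta$ apart. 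Each $h_i$ sends the geodesic ray $(v_0,v_{-1},\dots)$ to a geodesic ray from $v_0$ to $h_i\omega_-(g)$ passing through $h_i g^{-k}(v_0)$; two asymptotic rays from a common basepoint stay within $2\delta$ of each other, so the $h_i\omega_-(g)$ are pairwise distinct. Since $K$ was arbitrary, $|G\cdot\omega_-(g)|=\infty$.
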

\begin{proof}
	Observe that $g$ is hyperbolic by Corollary \ref{cor:ell_and_inf} and so $\omega_-(g)$ is well defined. Fix $K\in\bN$. We show that $|G\omega_{-}(g)|\ge K$. To do so let $M$ be the number of vertices within distance $2\delta$ of some vertex $v\in \Gamma$. Since $G$ acts vertex-transitively on $\Gamma$, this number is independent of $v$.
	
	Since $s(g)^n = s(g^n)$ and $\omega_{-}(g) = \omega_{-}(g^n)$, Theorem \ref{thm:axis} allows us to assume that $g$ acts by translation along some bi-infinite geodesic. Denote this geodesic by $(v_{i})_{i\in\bZ}$ with $\lim v_{-i}\to \omega_{-}(g)$. 
	
	Observe that for all $k\in\bN$ that 
	\[s(g)^k \le [G_{v_0}:G_{v_0}\cap g^{-k}G_{v_0}g^k] = [G_{v_0}:G_{v_0}\cap G_{g^{-k}(v_0)}].\]
	The orbit-stabiliser theorem shows that the size of the orbit of $g^{-k}(v_0)$ under $G_{v_0}$ is unbounded as $k\to\infty$. We assume that $k$ is large enough so that the size of this orbit is larger that $MK$. Choice of $M$ gives $\{h_i\mid 0\le i\le K\}\subset G_{v_0}$ such that $d(h_ig^{-k}(v_0), h_jg^{-k}(v_0)) > 2\delta$ for $i\neq j$.  Noting that $(h(v_{-i}))_{i\in\bN}$ is a geodesic from $v_0$ to $h^{-i}(\omega_{-}(g))$ which contains $g^{-k}(v_0)$, \cite[Chapter III.H Lemma 3.3]{Bridson99} shows that $h_i(\omega_-(g))\neq h_j(\omega_-(g))$ for $i\neq j$. This completes the proof. 
\end{proof}
We characterise completely when the space of directions of a hyperbolic group consists of a single point.
\begin{corollary}
	Suppose $G$ is a hyperbolic t.d.l.c. group with rough Cayley graph $\Gamma$. Then $|\partial G| = 1$ if and only if:
	\begin{enumerate}
		\item $G$ is not uniscalar;
		\item $G$ fixes a point in $\partial \Gamma$.
	\end{enumerate}
\end{corollary}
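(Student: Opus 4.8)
The plan is to prove both directions using the results already established, especially Theorem~\ref{thm:mainthm} and Lemma~\ref{lem:scale_to_boundary}. Recall that $|\partial G| = 1$ means there is exactly one asymptotic class of automorphisms moving towards infinity, equivalently $\partial G$ is non-empty and Theorem~\ref{thm:mainthm}\ref{itm:part_1_main_thm} sends all of $\partial G$ to a single point of $\partial\Gamma$. For the backward direction, suppose $G$ is not uniscalar and $G$ fixes a point $\varepsilon\in\partial\Gamma$. Non-uniscalarity gives some $g\in G$ with $s(g) > 1$ (after possibly replacing $g$ by $g^{-1}$ and using Proposition~\ref{prop:scale_prop}), hence $g$ moves towards infinity, so $\partial G\neq\varnothing$. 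By Corollary~\ref{cor:ell_and_inf}, $g$ is not elliptic, and since a hyperbolic group has no parabolic elements (the theorem attributed to \cite[Theorem 22]{Baumgartner12}), $g$ is hyperbolic, with $\omega_+(g),\omega_-(g)\in\partial\Gamma$. Now I claim $\omega_+(g) = \varepsilon$: since $G$ fixes $\varepsilon$, the orbit $G\cdot\omega_-(g)$ would be infinite by Lemma~\ref{lem:scale_to_boundary} if $\omega_-(g)\neq\varepsilon$, and similarly $G\cdot\omega_+(g)$ is infinite if $\omega_+(g)\neq\varepsilon$; but actually the cleaner route is: $\omega_\pm(g)$ are fixed by $g$, and if neither equals $\varepsilon$ then $g$ fixes three distinct boundary points, contradicting that a hyperbolic isometry fixes exactly two. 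Wait — $g$ might fix $\varepsilon$ as a third point only if $\varepsilon\notin\{\omega_+(g),\omega_-(g)\}$, which is exactly the contradiction. So every hyperbolic $g$ moving towards infinity has $\{\omega_+(g),\omega_-(g)\}\ni\varepsilon$. Since every element of $\partial G$ is represented by such a $g$ (elliptic elements don't move towards infinity by Corollary~\ref{cor:ell_and_inf}, parabolics don't exist), and since $g$ and $g^{-1}$ cannot both move towards infinity when their common attracting/repelling pair is forced to contain the single fixed point $\varepsilon$... here one must be slightly careful: if both $s(g)>1$ and $s(g^{-1})>1$ then $\omega_+(g) = \varepsilon = \omega_+(g^{-1}) = \omega_-(g)$, impossible. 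So for each hyperbolic $g$ moving towards infinity, $\omega_+(g) = \varepsilon$, and Lemma~\ref{lem:dir_to_hyp} shows all such $g$ are asymptotic. Hence $|\partial G| = 1$.

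For the forward direction, suppose $|\partial G| = 1$. Then $\partial G\neq\varnothing$, so some $g\in G$ moves towards infinity, whence $s(g)>1$ and $G$ is not uniscalar (a uniscalar group has $s(x) = 1$ for all $x$, or more precisely every element stabilises a common compact open subgroup — in any case $\partial G = \varnothing$ for uniscalar $G$). This gives (1). For (2): by the argument above, the hyperbolic element $g$ with $s(g)>1$ has a well-defined attracting end $\varepsilon := \omega_+(g)\in\partial\Gamma$. I must show $G$ fixes $\varepsilon$. Take any $x\in G$. Then $xgx^{-1}$ is hyperbolic with $\omega_+(xgx^{-1}) = x\cdot\omega_+(g) = x\varepsilon$, and $s(xgx^{-1}) = s(g)>1$ by Proposition~\ref{prop:scale_prop}(iv), so $xgx^{-1}$ moves towards infinity and $\partial_G(xgx^{-1})\in\partial G$. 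Since $|\partial G| = 1$, we have $g\asymp xgx^{-1}$. Now I need: if two hyperbolic elements moving towards infinity are asymptotic, then they have the same attracting end. This should follow from Theorem~\ref{thm:mainthm}\ref{itm:part_1_main_thm}, which states precisely that the attracting-end map is well-defined on $\partial G$ and injective — well-definedness is what I need: asymptotic hyperbolic elements moving towards infinity have equal attracting ends. Hence $x\varepsilon = \varepsilon$ for all $x\in G$, i.e.\ $G$ fixes $\varepsilon$, giving (2).

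The main obstacle is making sure the attracting-end map of Theorem~\ref{thm:mainthm} is genuinely \emph{well-defined} (not merely injective) on asymptotic classes — i.e.\ that $g\asymp h$ with both hyperbolic and moving towards infinity forces $\omega_+(g) = \omega_+(h)$. The theorem statement phrases this as "induces an injection from the set of directions", which already presupposes well-definedness, so I would cite it directly; if the referee wants this spelled out, the contrapositive is Lemma~\ref{lem:hyp_dist_lem} together with the coset-index estimate in the proof of Theorem~\ref{thm:mainthm}, which shows $\omega_+(g)\neq\omega_+(h)$ implies $\delta(g,h) = 2\neq 0$, hence $g\not\asymp h$. A secondary point to handle cleanly is the claim "uniscalar $\Rightarrow \partial G = \varnothing$": if $G$ is uniscalar then $s(x) = 1$ for every $x\in G$ by definition, so no element moves towards infinity, so $G_> = \varnothing$ and $\partial G = \varnothing$; this is immediate from the definitions. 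I would also double-check the edge case where $G$ has hyperbolic elements but the "single fixed point" coincidences force $g^{-1}$ to be uniscalar rather than moving towards infinity — this is consistent and in fact expected (Example~\ref{ex:2_dir} shows two directions precisely when both $g$ and $g^{-1}$ move towards infinity).
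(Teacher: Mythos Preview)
Your forward direction is correct and matches the paper's argument exactly: conjugate $g$ by an arbitrary $x\in G$, observe $s(xgx^{-1})=s(g)>1$, and use the injection of Theorem~\ref{thm:mainthm}\ref{itm:part_1_main_thm} (whose well-definedness you correctly extract from the contrapositive of the $\delta(g,h)=2$ computation) to force $x\omega_+(g)=\omega_+(g)$.

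The backward direction has a gap. Your ``three fixed points'' observation correctly yields $\varepsilon\in\{\omega_+(g),\omega_-(g)\}$ for every $g$ moving towards infinity, but your attempt to upgrade this to $\varepsilon=\omega_+(g)$ does not work. The sentence ``if both $s(g)>1$ and $s(g^{-1})>1$ then $\omega_+(g)=\varepsilon=\omega_+(g^{-1})=\omega_-(g)$'' already \emph{assumes} the conclusion you are trying to prove (that $\omega_+$ of anything moving towards infinity equals $\varepsilon$); applying only what you have established, both constraints $\varepsilon\in\{\omega_\pm(g)\}$ and $\varepsilon\in\{\omega_\pm(g^{-1})\}$ are the same set-membership and give no contradiction. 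Moreover, nothing you have written rules out the scenario $s(g)>1$, $s(g^{-1})=1$, and $\varepsilon=\omega_-(g)$.

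The paper closes this gap with Lemma~\ref{lem:scale_to_boundary}, which you mention but misstate. That lemma says unconditionally that if $s(g)>1$ then the $G$-orbit of $\omega_-(g)$ is infinite; since $G$ fixes $\varepsilon$, this forces $\omega_-(g)\neq\varepsilon$, and then your three-fixed-points argument (or Theorem~\ref{thm:class_hyp_action} directly) gives $\omega_+(g)=\varepsilon$. With that one-line correction your argument goes through and coincides with the paper's.
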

\begin{proof}
	Suppose $|\partial G| = 1$. There exists $g\in G$ such that $s(g) > 1$, thus $G$ is not uniscalar. Furthermore, if $h\in G$, we have $\omega_+(hgh^{-1}) = h\omega_{+}(g)$. We must have $h\omega_+(g) = \omega_{+}(g)$ as otherwise $s(hgh^{-1})> 1$ and  $hgh^{-1}\not\asymp g$ by Theorem \ref{thm:mainthm}.
	
	Suppose $G$ is not uniscalar and fixes a point $\omega\in \partial \Gamma$. There exists $g\in G$ hyperbolic such that $s(g)>1$. Thus $|\partial G|\ge 1$. It suffices to show that if $g,h\in G$ with $s(g) > 1$ and $s(h)> 1$, then $\omega_+(h) = \omega_+(g)$ as Theorem \ref{thm:mainthm} shows $\partial g = \partial h$. We show that $\omega_+(g) = \omega$. Indeed, we have $g(\omega) = \omega$ since $G$ fixes $\omega$. Theorem \ref{thm:class_hyp_action} shows $\omega\in\{\omega_+(g), \omega_-(g)\}$, but Lemma \ref{lem:scale_to_boundary} shows that $\omega \neq \omega_{-}(g)$.
\end{proof}

\begin{remark}
	In contrast with Corollary \ref{cor:inf_dir}, the group given in Example \ref{ex:2_dir} has two directions. 
\end{remark}
\begin{corollary}\label{cor:inf_dir}
	Suppose $G$ is a hyperbolic t.d.l.c. group. Then $|\partial G|$ is either $0$, $1$ or infinite.
\end{corollary}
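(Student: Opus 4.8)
The plan is to prove the contrapositive of the dichotomy: if $|\partial G|$ is neither $0$ nor $1$, then it is infinite. So suppose $|\partial G|\ge 2$. The first step is to extract from this hypothesis a concrete hyperbolic element: by definition $\partial G = \partial_G(G_>)$, so $G_>$ is non-empty and contains at least two automorphisms $g,h$ with $\partial g\neq \partial h$. Since $G$ is hyperbolic it has no parabolic elements (cited earlier) and elliptic elements do not move towards infinity by Corollary \ref{cor:ell_and_inf}, so $g$ and $h$ are both hyperbolic in the sense of Definition \ref{def:isom_class}. By Theorem \ref{thm:mainthm}\ref{itm:part_1_main_thm} the map $\partial g\mapsto \omega_+(g)$ is an injection from $\partial G$ into $\partial\Gamma$, so the attracting points $\omega_+(g)$ and $\omega_+(h)$ are distinct points of $\partial\Gamma$; in particular $\omega_-(g)\neq\omega_+(g)$ need not hold a priori, but what matters is that $s(g)>1$ since $g\in G_>$.

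The second step is to produce infinitely many directions from this single element $g$ with $s(g)>1$. Apply Lemma \ref{lem:scale_to_boundary}: the orbit $G\,\omega_-(g)\subset\partial\Gamma$ is infinite. For each $x\in G$ we have $\omega_-(xg^{-1}x^{-1}) = x\,\omega_-(g)$ because conjugation by $x$ acts on $\partial\Gamma$ compatibly with the boundary-point assignment, and $\omega_-(g) = \omega_+(g^{-1})$. Now $g^{-1}$ need not move towards infinity, so instead I would work with the conjugates $xgx^{-1}$ directly: each satisfies $s(xgx^{-1}) = s(g)>1$ by Proposition \ref{prop:scale_prop}(iv), hence each $xgx^{-1}$ is hyperbolic and moves towards infinity, and $\omega_+(xgx^{-1}) = x\,\omega_+(g)$. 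So it suffices to show the orbit $G\,\omega_+(g)$ is infinite — but this is just Lemma \ref{lem:scale_to_boundary} applied to $g^{-1}$ if $s(g^{-1})>1$, or applied after noting $\omega_+(g) = \omega_-(g^{-1})$. To avoid the case $s(g^{-1})=1$, I would instead note that $G\,\omega_-(g)$ being infinite already gives infinitely many hyperbolic elements $x g^{-1} x^{-1}$...

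Let me restate the clean route. Since $s(g)>1$, Lemma \ref{lem:scale_to_boundary} gives that $G\,\omega_-(g)$ is infinite. Pick representatives $x_1,x_2,\ldots\in G$ with the $x_i\,\omega_-(g)$ pairwise distinct. Consider $g_i := x_i\, g\, x_i^{-1}$. Then $s(g_i)=s(g)>1$ by Proposition \ref{prop:scale_prop}(iv), so each $g_i\in G_>$ and is hyperbolic, with $\omega_-(g_i)=x_i\,\omega_-(g)$ pairwise distinct. By Theorem \ref{thm:mainthm}\ref{itm:part_1_main_thm} the injection $\partial G\hookrightarrow\partial\Gamma$ sends $\partial g_i$ to $\omega_+(g_i)$; to conclude I need the $\partial g_i$ pairwise distinct, equivalently the $\omega_+(g_i)$ pairwise distinct, so I should run Lemma \ref{lem:scale_to_boundary} on $\omega_+$ rather than $\omega_-$. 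This is symmetric: the same proof with the geodesic traversed in the opposite direction, or simply applying the lemma to an element with $\omega_-$ equal to $\omega_+(g)$; since $s(g^n)=s(g)^n>1$ and $\omega_+(g)=\omega_-(g^{-1})$, one reduces to the stated lemma up to replacing $g$ by a suitable element. Thus $G\,\omega_+(g)$ is infinite, the $\omega_+(g_i)$ give infinitely many distinct points of $\partial\Gamma$ in the image of the injection, hence $\partial G$ is infinite. The main obstacle is the bookkeeping around which of $\omega_+$ or $\omega_-$ has the infinite orbit — Lemma \ref{lem:scale_to_boundary} is stated for $\omega_-(g)$, so I must be careful to apply it to an element whose repelling (or attracting, via an inverse-and-power argument) point is the one I want to spread around by conjugation; once that is pinned down, everything else is immediate from Theorem \ref{thm:mainthm} and Proposition \ref{prop:scale_prop}(iv).
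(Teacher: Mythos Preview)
Your proposal has a genuine gap at the point you yourself flag: Lemma~\ref{lem:scale_to_boundary} is \emph{not} symmetric in $\omega_+$ and $\omega_-$. Its proof uses $s(g)>1$ to force $[G_{v_0}:G_{v_0}\cap g^{-k}G_{v_0}g^{k}]$ to be unbounded, and the analogous argument for $\omega_+(g)=\omega_-(g^{-1})$ would need $s(g^{-1})>1$. In a non-unimodular hyperbolic group (for instance the affine group of $\mathbb{Q}_p$ acting on its Bruhat--Tits tree) one can have $s(g)>1$ while $s(g^{-1})=1$, so ``reversing the geodesic'' or ``replacing $g$ by a suitable element'' does not go through without an additional idea. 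Consequently you cannot conclude that $G\omega_+(g)$ is infinite from $g$ alone, and having the $\omega_-(g_i)$ pairwise distinct tells you nothing about the $\omega_+(g_i)$, which is what the injection of Theorem~\ref{thm:mainthm} sees.

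The fix is to use the second element $h$ you already produced, rather than discarding it. You have $\omega_+(h)\neq\omega_+(g)$ with $s(g),s(h)>1$. Now split into two cases. If $\omega_+(h)=\omega_-(g)$, then Lemma~\ref{lem:scale_to_boundary} applied to $g$ gives directly that $G\omega_+(h)=G\omega_-(g)$ is infinite. If $\omega_+(h)\neq\omega_-(g)$, then $\omega_+(h)\notin\{\omega_+(g),\omega_-(g)\}$; since every nonzero power of the hyperbolic element $g$ fixes exactly $\{\omega_\pm(g)\}$ on $\partial\Gamma$, the orbit $\{g^n\omega_+(h):n\in\bZ\}$ is already infinite. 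In either case you obtain $x_1,x_2,\ldots\in G$ with the points $x_i\omega_+(h)=\omega_+(x_ihx_i^{-1})$ pairwise distinct, and since $s(x_ihx_i^{-1})=s(h)>1$ by Proposition~\ref{prop:scale_prop}(iv), Theorem~\ref{thm:mainthm}\ref{itm:part_1_main_thm} yields infinitely many distinct directions. This is exactly the paper's argument; your outline was one step away from it but tried to get by with a single element where two are needed.
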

\begin{proof}
	We have $|\partial G| = 0$ if and only if $G$ is uniscalar since no element can move towards infinity. If $|\partial G| > 1$, then Theorem \ref{thm:mainthm} gives $g,h\in G$ moving towards infinity such that $\omega_{+}(g)\neq \omega_{+}(h)$.
	If $\omega_{+}(h) = \omega_-(g)$, then the orbit of $\omega_+(h)$ under the action of $G$ is infinite by Lemma \ref{lem:scale_to_boundary}. Alternatively, if $\omega_+(h)\neq \omega_-(g)$, then the orbit of $\omega_+(h)$ under the action of $g$, thus $G$, is infinite since any power of $g$ can only fix $\omega_{\pm}(g)$. In both cases, since $\omega_+(g'h(g')^{-1}) = \gamma'\omega_+(h)$ and $s(g'h(g')^{-1}) = s(h)$ for all $g'\in G$, Theorem \ref{thm:mainthm} shows $\partial G$ is infinite.
\end{proof}

The proof of Corollary \ref{cor:dense_image} depends on \cite[Theorem 2]{Woess93}. There, $L(G)$ denotes the set of all accumulation points of a single orbit $Gv$ in $\partial \Gamma$ where $v\in V(\Gamma)$. Since $G$ acts vertex-transitively on any rough Cayley graph $\Gamma$, we have the simplification $L(G) = \partial \Gamma$. 

\begin{corollary}\label{cor:dense_image}
	Suppose $G$ is a hyperbolic t.d.l.c. group with rough Cayley graph $\Gamma$ such that $|\partial G| > 1$. Then the map given in Theorem \ref{thm:mainthm} has dense image.

\end{corollary}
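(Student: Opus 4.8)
The plan is to combine Corollary~\ref{cor:inf_dir}, the $G$-equivariance of the map from Theorem~\ref{thm:mainthm}, and the structure theorem \cite[Theorem 2]{Woess93} for the limit set $L(G)=\partial\Gamma$. Write $I\subseteq\partial\Gamma$ for the image of that map, that is, $I=\{\omega_+(g)\mid g\in G \text{ hyperbolic and moving towards infinity}\}$; the goal is to show $\overline{I}=\partial\Gamma$.

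First I would record that $|\partial\Gamma|>2$. Since $|\partial G|>1$, Corollary~\ref{cor:inf_dir} forces $|\partial G|$ to be infinite, and Theorem~\ref{thm:mainthm} supplies an injection $\partial G\hookrightarrow\partial\Gamma$, so $\partial\Gamma$ is infinite. In particular $|L(G)|=|\partial\Gamma|>2$, so \cite[Theorem 2]{Woess93} applies and tells us that $L(G)=\partial\Gamma$ is the unique minimal nonempty closed $G$-invariant subset of $\partial\Gamma$ (equivalently, the closure of the set of attracting fixed points of hyperbolic elements of $G$).

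It then suffices to check that $\overline{I}$ is a nonempty closed $G$-invariant subset of $\partial\Gamma$, for then minimality gives $\overline{I}=L(G)=\partial\Gamma$ at once. Nonemptiness is clear, as $|\partial G|\ge 1$ produces at least one hyperbolic element moving towards infinity. For $G$-invariance, recall that $G$ acts on $\partial\Gamma$ by homeomorphisms (each isometry of $\Gamma$ is a quasi-isometry, hence induces a homeomorphism of the boundary), and that for $h\in G$ and $g$ hyperbolic moving towards infinity, $hgh^{-1}$ is again hyperbolic with $s(hgh^{-1})=s(g)>1$ by Proposition~\ref{prop:scale_prop}, so it too moves towards infinity, while $\omega_+(hgh^{-1})=h\,\omega_+(g)$ by Theorem~\ref{thm:class_hyp_action}. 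Hence $hI=I$ for every $h\in G$, and applying the homeomorphism $h$ yields $h\overline{I}=\overline{I}$. The only point that requires care is verifying the hypothesis of \cite[Theorem 2]{Woess93}, namely that $L(G)$ has more than two points, which is exactly where the assumption $|\partial G|>1$ enters through Corollary~\ref{cor:inf_dir}; the rest is formal, using only the conjugation-invariance of the scale and the $G$-equivariance of $g\mapsto\omega_+(g)$.
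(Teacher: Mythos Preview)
Your proof is correct and takes a genuinely different route from the paper's. The paper first invokes \cite[Theorem 2(ii) and Corollary 4]{Woess93} to obtain density of the set $H(G)=\{\omega_+(g)\mid g\in G\text{ hyperbolic}\}$ in $\partial\Gamma$, and then, for each $h\in G$ hyperbolic and each element $g$ moving towards infinity with $\omega_+(g)\neq\omega_+(h)$, uses the north--south dynamics $h^n\omega_+(g)\to\omega_+(h)$ to explicitly manufacture conjugates $h^ngh^{-n}$ moving towards infinity with attracting points in any prescribed neighbourhood. You instead observe directly that the image $I$ is $G$-invariant under conjugation (via Proposition~\ref{prop:scale_prop} and Theorem~\ref{thm:class_hyp_action}) and nonempty, and then apply the minimality of $L(G)=\partial\Gamma$ from \cite[Theorem 2]{Woess93} to force $\overline{I}=\partial\Gamma$. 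Your argument is shorter and more conceptual, trading the explicit dynamical construction for an appeal to Corollary~\ref{cor:inf_dir} to check the non-elementary hypothesis $|L(G)|>2$; the paper's argument is more hands-on and actually exhibits, for each boundary neighbourhood, a sequence of directions mapping into it.
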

\begin{proof}

	Since $G$ acts transitively on $\Gamma$, it follows from \cite[Theorem 2 (ii) and Corollary 4]{Woess93} that $H(G) = \{\omega_{+}(g)\mid g\in G \hbox{ is hypebolic}\}$ is dense in $\partial \Gamma$. Thus is $\omega\in \partial \Gamma$ and $U\subset \partial \Gamma$ is open and contains $U$, then there exists $h\in G$ hyperbolic with $\omega_+(h)\in U$. Since $|\partial G|>1$, Theorem \ref{thm:mainthm} gives $g\in G$ hyperbolic and moving towards infinity with $\omega_{+}(g)\neq \omega_+(h)$. Then the sequence $\omega_+(h^ngh^{-n}) = h^n\omega_+(g)$ converges to $\omega_+(h)$ and is therefore eventually contained in $U$. Moreover, $s(h^ngh^{-n}) = s(g)> 1$ and so $h^ngh^{-n}$ moves towards infinity for each $n\in \bN$. Thus the map given by Theorem \ref{thm:mainthm} maps the sequence $\partial (h^ngh^{-n})\in \partial G$ to $h^n(\omega_{+}(g))\in \partial \Gamma$ which is eventually contained in $U$. 
\end{proof}

\bibliography{RefFile}

\begin{thebibliography}{BMW12}

\bibitem[Abe74]{Abels74}
H.~Abels.
\newblock Specker-{K}ompaktifizierungen von lokal kompakten topologischen
  {G}ruppen.
\newblock {\em Math. Z.}, 135:325--361, 1973/74.

\bibitem[BH99]{Bridson99}
M.~R. Bridson and A.~Haefliger.
\newblock {\em Metric spaces of non-positive curvature}.
\newblock Grundlehren der mathematischen Wissenschaften. Springer, Berlin, New
  York, 1999.

\bibitem[BMW12]{Baumgartner12}
U.~{Baumgartner}, R.~G. M{\"o}ller, and G.~A. Willis.
\newblock Hyperbolic groups have flat-rank at most 1.
\newblock {\em Israel J. Math.}, 190:365--388, 2012.

\bibitem[BT17]{Bywaters17B}
T.~P. {Bywaters} and S.~{Tornier}.
\newblock {Willis Theory via Graphs}.
\newblock {\em ArXiv e-prints}, November 2017.

\bibitem[BW04]{Baumgartner04}
U.~{Baumgartner} and G.~A. Willis.
\newblock Contraction groups and scales of automorphisms of totally
  disconnected locally compact groups.
\newblock {\em Israel J. Math.}, 142(1):221--248, 2004.

\bibitem[BW06]{Baumgartner06}
U.~{Baumgartner} and G.~A. Willis.
\newblock The direction of an automorphism of a totally disconnected locally
  compact group.
\newblock {\em Math. Z.}, 252(2):393--428, 2006.

\bibitem[CH12]{Chat12}
Z.~Chatzidakis and E.~Hrushovski.
\newblock An invariant for difference field extensions.
\newblock {\em Ann. Fac. Sci. Toulouse Math. (6)}, 21(2):217--234, 2012.

\bibitem[Del91]{Delzant91}
T.~Delzant.
\newblock Sous-groupes \`a deux g\'en\'erateurs des groupes hyperboliques.
\newblock In {\em Group theory from a geometrical viewpoint ({T}rieste, 1990)},
  pages 177--189. World Sci. Publ., River Edge, NJ, 1991.

\bibitem[DSW06]{Dani06}
S.~Dani, N.~A. Shah, and G.~A. Willis.
\newblock Locally compact groups with dense orbits under
  $\mathbb{Z}^{d}$-actions by automorphisms.
\newblock {\em Ergodic Theory and Dynamical Systems}, 26(5):1443--1465, 2006.

\bibitem[Gle52]{Gleason52}
A.~M. Gleason.
\newblock Groups without small subgroups.
\newblock {\em Ann. of Math. (2)}, 56:193--212, 1952.

\bibitem[Gro87]{Gromov87}
M.~Gromov.
\newblock hyperbolic groups.
\newblock In {\em Essays in group theory}, volume~8 of {\em Math. Sci. Res.
  Inst. Publ.}, pages 75 -- 263. Springer, New York, 1987.

\bibitem[JRW96]{Jaworski96}
W.~Jaworski, J.~Rosenblatt, and G.~A. Willis.
\newblock Concentration functions in locally compact groups.
\newblock {\em Math. Ann.}, 305(4):673--691, 1996.

\bibitem[KM08]{Kron08}
B.~Kr{\"o}n and R.~G. M{\"o}ller.
\newblock Analogues of {C}ayley graphs for topological groups.
\newblock {\em Math. Z.}, 258(3):637--675, 2008.

\bibitem[Kur50]{Kuranishi50}
M.~Kuranishi.
\newblock On {E}uclidean local groups satisfying certain conditions.
\newblock {\em Proc. Amer. Math. Soc.}, 1:372--380, 1950.

\bibitem[M{\"o}l02]{Moe02}
R.~G. M{\"o}ller.
\newblock Structure theory of totally disconnected locally compact groups via
  graphs and permutations.
\newblock {\em Canadian Journal of Mathematics}, 54(4):795--827, 2002.

\bibitem[M{\"o}l03]{Moller03}
R.~G. M{\"o}ller.
\newblock {${\rm FC}^-$}-elements in totally disconnected groups and
  automorphisms of infinite graphs.
\newblock {\em Math. Scand.}, 92(2):261--268, 2003.

\bibitem[MZ52]{Montgomery52}
D.~Montgomery and L.~Zippin.
\newblock Small subgroups of finite-dimensional groups.
\newblock {\em Ann. of Math. (2)}, 56:213--241, 1952.

\bibitem[PW03]{Previts03}
W.~H Previts and T.~S. Wu.
\newblock Dense orbits and compactness of groups.
\newblock {\em Bulletin of the Australian Mathematical Society},
  68(1):155--159, 2003.

\bibitem[SW13]{Shalom13}
Y.~Shalom and G.~A. Willis.
\newblock Commensurated subgroups of arithmetic groups, totally disconnected
  groups and adelic rigidity.
\newblock {\em Geom. Funct. Anal.}, 23(5):1631--1683, 2013.

\bibitem[V{\"a}i05]{Vaisala05}
J.~V{\"a}is{\"a}l{\"a}.
\newblock Gromov hyperbolic spaces.
\newblock {\em Expo. Math.}, 23(3):187 -- 231, 2005.

\bibitem[vD31]{vDa31}
D.~van Dantzig.
\newblock {\em Studien over topologische algebra}.
\newblock University of Groningen, 1931.

\bibitem[Wil94]{Willis94}
G.~A. Willis.
\newblock The structure of totally disconnected, locally compact groups.
\newblock {\em Math. Ann.}, 300(2):341--363, 1994.

\bibitem[Wil01]{Willis01}
G.~A. Willis.
\newblock Further properties of the scale function on a totally disconnected
  group.
\newblock {\em J. Algebra}, 237(1):142--164, 2001.

\bibitem[Wil04]{Willis04}
G.~A. Willis.
\newblock Tidy subgroups for commuting automorphisms of totally disconnected
  groups: an analogue of simultaneous triangularisation of matrices.
\newblock {\em New York J. Math.}, 10:1--35 (electronic), 2004.

\bibitem[Wil15]{Willis15}
G.~A. Willis.
\newblock The scale and tidy subgroups for endomorphisms of totally
  disconnected locally compact groups.
\newblock {\em Math. Ann.}, 361(1-2):403--442, 2015.

\bibitem[Woe93]{Woess93}
W.~Woess.
\newblock Fixed sets and free subgroups of groups acting on metric spaces.
\newblock {\em Math. Z.}, 214(3):425--439, 1993.

\bibitem[Yam53a]{Yamabe53}
H.~Yamabe.
\newblock A generalization of a theorem of {G}leason.
\newblock {\em Ann. of Math. (2)}, 58:351--365, 1953.

\bibitem[Yam53b]{Yamabe53B}
H.~Yamabe.
\newblock On the conjecture of {I}wasawa and {G}leason.
\newblock {\em Ann. of Math. (2)}, 58:48--54, 1953.

\end{thebibliography}
\bibliographystyle{alpha}
\end{document}